\documentclass[12pt,a4paper]{amsart}
\usepackage{geometry}           
\geometry{a4paper}      
\usepackage{ucs} 
\usepackage[utf8x]{inputenc}
\usepackage{xcolor}

\usepackage{graphicx}
\usepackage{amsmath,amssymb}
\usepackage{bbm}

\usepackage{mathtools}

\theoremstyle{plain}
\newtheorem{theorem}{Theorem}
\newtheorem{corollary}[theorem]{Corollary}
\newtheorem{proposition}[theorem]{Proposition}
\newtheorem{lemma}[theorem]{Lemma}
\theoremstyle{remark}

\newtheorem{remark}[theorem]{Remark}

\newcommand{\var}{\mathop{\rm Var}}
\newcommand{\cov}{\mathop{\rm Cov}}
\newcommand{\jij}{J_{i\leftarrow j}}

\newcommand{\red}[1]{{\color{black} #1}}

\title[Universality of mean-field  equations]{Universality of the mean-field equations of  networks of Hopfield-like  neurons}


\author[Faugeras]{Olivier Faugeras$^*$}
\address{$^*$Université Côte D'Azur, Inria}
\email{Olivier.Faugeras@inria.fr}
\author[Tanr\'e]{Etienne Tanr\'e$^\dagger$}
\address{$^\dagger$Université Côte D'Azur, Inria, CNRS, LJAD}
\email{Etienne.Tanre@inria.fr}

\date{\today}
\usepackage[foot]{amsaddr}

\begin{document}
	\maketitle

\tableofcontents
\newpage
\begin{abstract}
We revisit the problem of characterising the mean-field limit of a network of Hopfield-like neurons. Building on the previous works 
\red{of Ben Arous and Guionnet}
we establish for a large class of 
networks of Hopfield-like neurons, i.e. rate neurons, the mean-field equations on a time interval $[0,\,T]$, $T>0$, of the thermodynamic
limit of these networks, i.e. the limit when the number of neurons goes to infinity. 
\red{Here,}
we do not assume that the synaptic weights describing the connections between the  neurons are i.i.d. as zero-mean Gaussians. 
The limit equations are stochastic and very simply described in terms of two functions, a ``correlation'' function noted $K_Q(t,\,s)$ 
and a ``mean'' function noted $m_Q(t)$. The ``noise'' part of the equations is a linear function of the Brownian motion, which is obtained 
by solving a Volterra equation of the second kind whose resolving kernel is expressed as a function of $K_Q$. We give a constructive proof 
of the uniqueness of the limit equations. We use the corresponding algorithm for an effective computation of the functions $K_Q$ and $m_Q$, 
given the weights distribution. Several numerical experiments are reported.
\end{abstract}
\noindent\textit{Keywords:} Large deviations;
Girsanov transform;
Hopfield Neural network;
Interacting Particle System.\\[3pt]
\textit{MSC2020 AMS classification: 60F10, 92B20, 60B10}

\section{Introduction}\label{Sec:Intro}
We revisit the problem of characterizing the limit of a network of 
Hopfield neurons. Hopfield \cite{hopfield:82} defined a large class of 
neuronal networks and 
\red{
	analysed   their ability to 
	perform computation  \cite{hopfield:84,hopfield-tank:86}.}
%

Inspired by his work, Sompolinsky and 
co-workers studied the thermodynamic limit of these networks when 
the interaction term is linear \cite{crisanti-sompolinsky:87} using the 
dynamic mean-field theory developed in 
\cite{sompolinsky-zippelius:82} for symmetric spin glasses. The 
method they use is a functional integral formalism developed in particle 
physics and produces the self-consistent mean-field equations of the 
network. This was later extended to the case of a nonlinear interaction 
term, the nonlinearity being an odd sigmoidal function 
\cite{sompolinsky-crisanti-etal:88}.  Using the same formalism the 
authors established the self-consistent mean-field equations of 
the network and the dynamics of its solutions which featured a chaotic 
behavior for some values of the network parameters. 
More recently, a derivation of the equations using the path integral formalism was provided by the same authors \cite{crisanti-sompolinsky:18}.

A little later the 
problem was picked up again independently by mathematicians, although in a different context. Ben Arous and 
Guionnet applied large deviation techniques to study the
thermodynamic limit of a network of spins interacting linearly through  
i.i.d. centered Gaussian weights. The intrinsic spin dynamics 
(without interactions) is a stochastic differential equation, unlike in the case of the work of Sompolinsky and collaborators where the only source of uncertainty lies in the interaction weights. 
Ben Arous and Guionnet proved
that the annealed (averaged) law of the empirical measure satisfies a 
large deviation principle and that the good rate function of this large 
deviation principle achieves its minimum value at a unique non 
Markovian measure~\cite{guionnet:95,ben-arous-guionnet:95,guionnet:97}. 
They also proved averaged propagation of chaos results. 
The advance provided by the work of Ben Arous and Guionnet is, beyond the crucial fact that they set the problem on firm mathematical ground, 
that they provide a description of the limit dynamics in a much more precise way than all previous authors.

Moynot and 
Samuelides \cite{moynot-samuelides:02} adapted their work to the case 
of a network of Hopfield neurons with a nonlinear interaction term, the 
nonlinearity being a sigmoidal function, and proved similar results in the 
case of discrete time. The intrinsic neural dynamics is the gradient of a 
quadratic potential. They did not provide a description of the limit dynamics akin to the one of Ben Arous and Guionnet.

Even more recently, van Meegen, K\"uhn and Helias, in \cite{meegen-kuhn-etal:21} nicely unified
 the Large Deviation approach of Ben Arous and Guionnet with the field-theoretical 
one of Sompolinsky and collaborators by showing that the Large Deviation rate function is in effect equal to the effective action arising in the field theory. 
They did not deal correctly with the limit dynamics though in the sense that they did not state in the limit equations the dependency of the effective interaction term with the Brownian motion.

Adapting the work of Ben Arous and Guionnet, Cabana and Touboul, in a series of papers \cite{cabana-touboul:13,cabana-touboul:18,cabana-touboul:18b}, have proposed a number of generalisations 
to more complicated setups. They applied the theory of Large Deviations and showed the existence of a unique probability measure where the rate function is zero, they 
identified the corresponding 
measure but did not provide a pathwise description of this probability measure. In effect their description, like that of all previous authors,  is based upon a certain process called the effective 
interaction process which is dependent of the Brownian motion under the limit law but this dependency is not analysed. 

In \cite{dembo_universality_2021} the authors proved that the result in \cite{guionnet:95,ben-arous-guionnet:95,guionnet:97} is in fact universal, i.e. that their limit probability 
measure is independent of the distribution of the weights under some mild assumptions.

In this article we show how to go beyond the ``self-consistent'' approach and provide a complete description of the effective interaction term as a function of the Brownian motion under the limit law. Thanks to the result in \cite{dembo_universality_2021} our results also hold for a much larger class of synaptic weights distributions than the Gaussian case.

The paper is organised as follows. In Section~\ref{sect:models} we present the classes of spin and neuron models that we analyse and the network equations 
that govern their activity. We then state the problem at hand,
namely the characterisation of these network equations when the network size grows 
arbitrarily large, the  thermodynamic limit. In Section~\ref{sect:previous} we recall the two main results which inspired our work
and
briefly state our three main results. In Section~\ref{sect:nonzero} we state and prove our first two main results, the stochastic 
equations of the dynamics of the thermodynamic limit of the networks and the characterisation of the diffusion part of these equations. Section~\ref{sect:f1} is a 
pause to analyse the special case when the interaction between the neurons is constant, albeit random, not depending on the states of the neurons. 
Section~\ref{sect:uniqueness} contains the proof of our third main result, the uniqueness of the thermodynamic limit. Being constructive this proof opens 
up the door to numerical experiments many of them presented in Section~\ref{sect:numerics}.

\section{The models}\label{sect:models}
We consider a filtered probability space $(\Omega, \mathcal{G},(\mathcal{G}_t),p)$ equipped with a 
sequence of one-dimensional Brownian motions $(B^i_t)_{t \geq 0, i\in\mathbb{N}}$.

The object of interest is a fully connected network of Hopfield-like neurons, in short Hopfield neurons. A Hopfield neuron is an abstraction of a real neuron 
where the state of the neuron is described by the values of its membrane potential. 
The dynamics of an isolated Hopfield neuron is driven by a certain  stochastic differential equation where the ``noise'' is assumed to account for the probabilistic 
effects present in various biological actors within the neuron, such as the ion channels. Our view of a Hopfield neuron is rather large since it encompasses 
the spin-glass model of Ben Arous and Guionnet ~\cite{guionnet:95,ben-arous-guionnet:95,guionnet:97}. When we will need to differentiate between the 
two models we will refer to the H-model for the biological one and to the S-model for the spin-glass model. This is described in Section~\ref{subsec:single}. 
Hopfield neurons can interact, one says that they are connected. The biological support of these interactions is the notion of synaptic connection. 
In Hopfield neurons, the synaptic connection between a neuron $j$ and a neuron $i$ is extremely simplified and boils down to the product of two numbers, 
the synaptic weight $\jij$ that represents the maximum strength of the influence of $j$ over $i$, and a function of the membrane potential $X^j$ of neuron $j$ 
which represents its activity, its firing rate.  This is described in Section~\ref{subsec:network}.
\subsection{Single neuron}\label{subsec:single}
Consider a Hopfield neuron with membrane potential $Y_t$. Its dynamics is described by a stochastic differential equation
\begin{equation}\label{eq:isolated}
\begin{cases}
dY_t  &=  g(Y_t)dt+\lambda dB_t\\
\text{Law of } Y_0  &=  \mu_0,
\end{cases}
\end{equation}
where $\lambda$ is a nonzero real parameter.
The function $g$ is chosen such that \eqref{eq:isolated} has a unique strong solution which 
does not explode in finite time. In the cases studied in this paper we have $g(y)=-\nabla U(y)$.
\begin{remark}\label{rem:g}
\noindent
\begin{description}
	\item[H-model] \textbf{$U(y) = \frac{\alpha}{2} y^2$}, i.e $g(y)=-\alpha y$, with $\alpha > 0$. This is the H-model where the membrane potential is an 
	Orstein-Uhlenbeck process
\[
Y_t = Y_0\exp(-\alpha t) + \lambda \int_0^t \exp(-\alpha (t - s)) dB_s.
\]
	\item[S-model] \textbf{$U$ is a confining potential}. This is the S-model studied by Ben Arous and Guionnet 
	\cite{ben-arous-guionnet:95}. 
	The probability law $\mu_0$ has a compact support $[-A,\,A]$ for some $A > 1$ which does not put any mass on the boundary $\{-A,\, A\}$.
	The function $U$ is $C^2$ on 
	the interval $]-A,A[$, and $U$ tends to infinity when $|x| \to A$ sufficiently fast to ensure that
\[
\lim_{\underset{<}{|x| \to A}} k_U(x) = +\infty,
\]
where
\begin{equation}\label{eq:kU}
k_U(x) = 2 \int_0^x \exp 2U(y)\left(\int_0^y \exp -2U(z) \,dz \right)\,dy.
\end{equation}
An example of such a function $U$ is 
\begin{equation}
U(y)=-\log(A^2-y^2).
\label{eq:U}
\end{equation}
It follows that
\begin{equation}
\label{eq:g}
g(y) = - \frac{2y}{A^2-y^2}.
\end{equation}
Assuming that the support of the initial condition \(\mu_0\) is included in \((-A,A)\),
it is recalled in \cite{ben-arous-guionnet:95} that the solution does 
not explode in finite time i.e. that if $T_\varepsilon = \inf \{t \, | \, |Y_t| \geq A - \varepsilon \}$, 
then for all $T$, $p(T_\varepsilon \leq T) \leq \frac{\exp T}{1+k_U(A-\varepsilon)}$ so that 
$p(\lim_{\varepsilon \downarrow 0} T_\varepsilon = +\infty)=1$. More details about explosion times can be found in the classical books by Rogers and Williams \cite{rogers_diffusions_2000}, Borodin \cite{borodin_stochastic_2017}, and Peskir and Shiryaev \cite{peskir_optimal_2006}.

Figure~\ref{fig:Ug} shows the functions $U(x)$ and $g(x)$ when $A=2$.
\begin{figure}[h]
\centerline{
\includegraphics[width=0.4\textwidth]{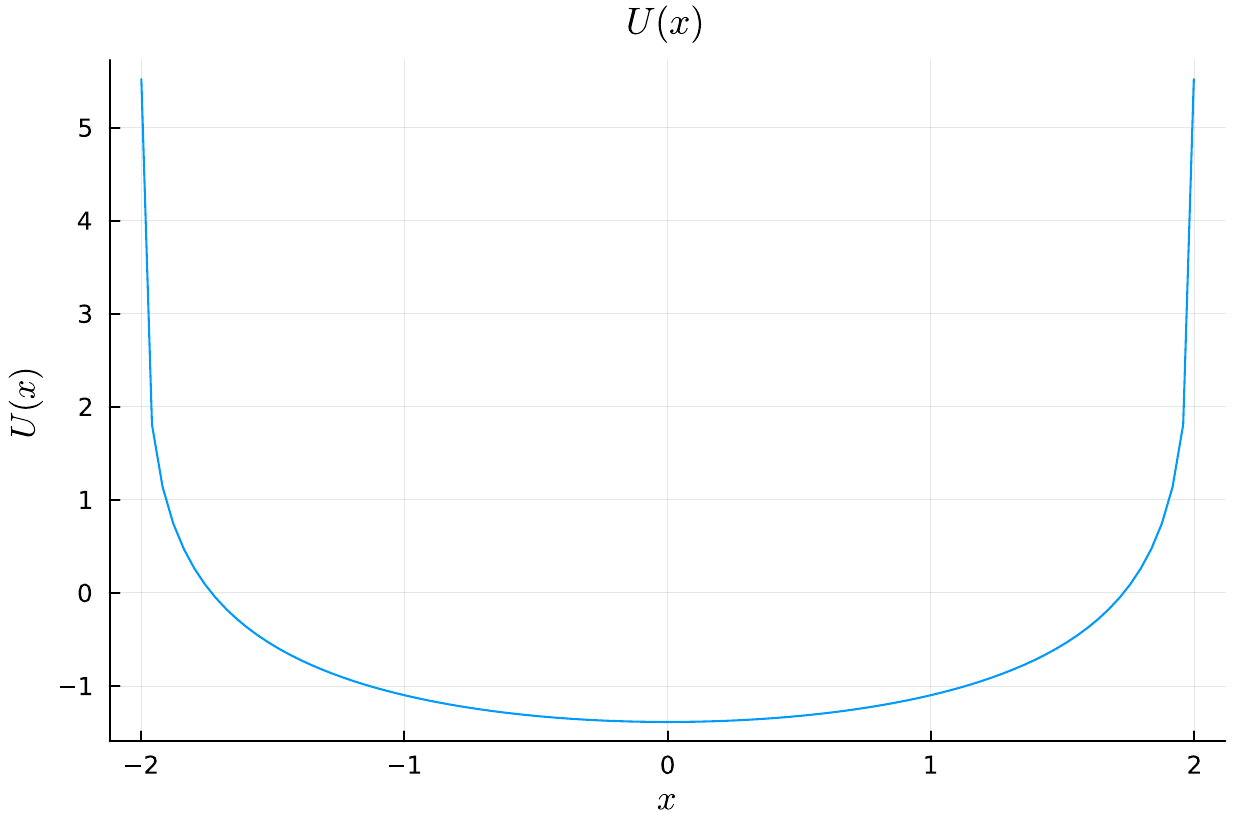}\hspace{0.5cm}\includegraphics[width=0.4\textwidth]{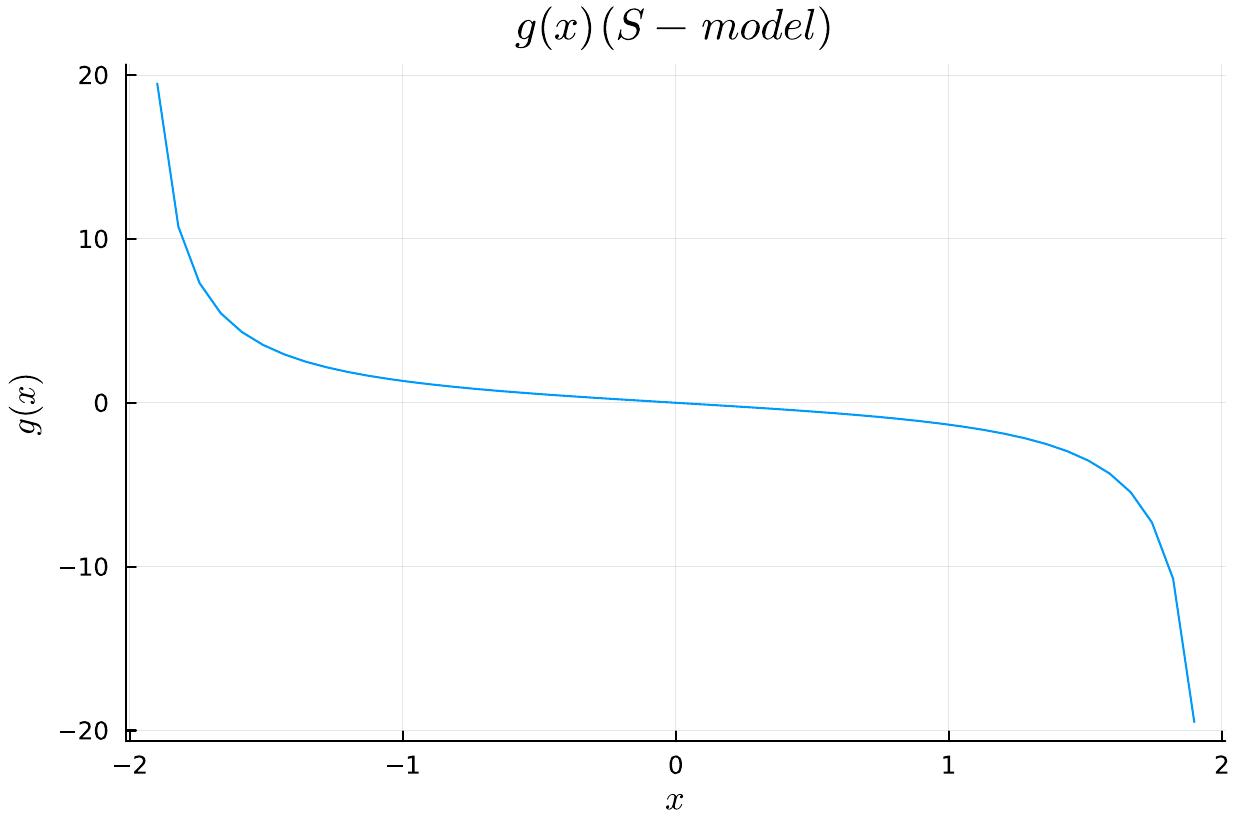}
}
\caption{S-model ($A=2$). Left: the function $U(x)$ on the interval $(-2,\,2)$. Right: the function $g(x)=-\nabla U(x)$}
\label{fig:Ug}
\end{figure}
\end{description}
\end{remark}
Let us note $P$ the law of the solution to \eqref{eq:isolated}. The previous remark shows that in the
 case $g(y)=-\alpha y$, $P$ is a probability measure on the set $\mathcal{C}_T$ of continuous functions on $[0,T]$ with values in $\mathbb{R}$ while in the second case it is a probability measure on the set $\mathcal{C}_T^A$ of continuous functions on $[0,T]$ with values in $[-A,\,A]$. In both cases we note $\mathcal{F}_t$ the $\sigma$-algebra generated by $(Y_s, 0 \leq s \leq t)$, 
  and $\mathbb{F}$ the corresponding filtration. In the sequel we note $\mathcal{C}$ either $\mathcal{C}_T$ or $\mathcal{C}_T^A$ and $P$ is a probability measure on $(\mathcal{C},  \mathcal{F}_T, \mathbb{F})$.
\subsection{Network of neurons}\label{subsec:network}
We consider $N$ Hopfield neurons and connect each one of them to all others (and to itself). The strengths of the connections are random, described by the synaptic weights. We introduce a second probability space $(\Theta,\mathcal{A},\gamma)$ and i.i.d. 
random variables $(\jij)_{i,\,j=1,\cdots,N}$ on $\Theta$ which are, under $\gamma$, 
i.i.d. as 
\begin{equation}\label{eq:synweights}
\jij \simeq \mathcal{N}\left(\frac{J}{N},\frac{\sigma^2}{N}\right),\ i,\,j=1,\cdots,N.
\end{equation} 
We note $\mathbf{J}$ the $N \times N$ matrix of the $\jij$s and $\mathcal{E}$ the expected value
with respect to the 
probability $\gamma$.
\begin{remark}
The hypothesis that the weights distribution is Gaussian is lifted in Section~\ref{subsec:nongaussian}.
\end{remark}
We consider a continuous function $f: \mathbb{R} \to \mathbb{R}$ which converts the values of the membrane potential into a quantity called the ``activity'' of the neuron. A classical example of such 
\textit{activity} is the firing rate,
i.e. the number of action potentials produced per second. 

The network equations are
\begin{equation}\label{eq:network}
\begin{cases}
dX^i_t & =   \left( g(X^i_t)+\sum_ {j =1}^N \jij f(X^j_t) 
\right)\,dt+\lambda dB^i_t \quad \text{ for } i =  1, \cdots, N\\
\text{Law of } X_0 & =  \mu_0^{\otimes N},
\end{cases}
\end{equation}
where the $(B_t^i)_{t \geq 0}$ are  $N$ independent  Brownian motions on $(\Gamma, \mathcal{G},(\mathcal{G}_t),p)$. 

The S-model studied by Ben Arous and Guionnet features $f(y) = y$ while the $\jij$s are i.i.d. as $\mathcal{N}(0,\frac{\sigma^2}{N})$, the so-called ``soft spin glass'' case. 
We extend this model to the case with non centered weights \(\jij\) in Section~\ref{sect:nonzero}.
 
 The H-model corresponds to the case $g(y) =-\alpha y$, $f$ a sigmoidal function and the $\jij$s are not necessarily centered. For future reference we assume that in this case
 \begin{equation}\label{eq:sigbound}
 | f(x) | \leq C\quad \forall x \in \mathbb{R} 
 \end{equation}
 
Key to the approach in this paper is to relate the solution to $N$ independent copies of \eqref{eq:isolated} with that of \eqref{eq:network}. This relation is obtained through the application of the  Cameron-Martin \cite{cameron-martin:44} and Girsanov \cite{girsanov:60} Theorems which we recall here for future reference.
\begin{theorem}[Cameron-Martin]\label{theo:cameron-martin}
Let $\mathbf {B}$ be an $N$-dimensional Brownian motion on $(\Omega, \mathcal{G}, (\mathcal{G}_t),p)$. Let $\mathbf{\Phi} = (\Phi^1,\cdots,\Phi^N)$ be in $\mathcal{L}^2([0,T];\mathbb{R}^N)$. Define
\[
\zeta_0^T(\mathbf{\Phi},\omega) = \int_0^T \mathbf{\Phi}(t)\, d\mathbf{B}_t - \frac{1}{2} \int_0^T \| \mathbf{\Phi}(t,\omega) \|^2\,dt,
\]
\[
\tilde{\mathbf{B}}_t(\omega) = \mathbf{B}_t(\omega) - \int_0^t \mathbf{\Phi}(s,\omega)\,ds,
\]
\[
d\tilde{p}(\omega) = \exp	\left\{ \zeta_0^T(\mathbf{\Phi},\omega) \right\} dp(\omega)
\]
Then, if
\begin{equation}\label{eq:proba}
\tilde{p}(\Omega) =1
\end{equation}
$\tilde{\mathbf{B}}$ is a new $N$-dimensional Brownian motion defined on $(\Omega,\mathcal{G},\tilde{p})$ w.r.t. to the same filtration $(\mathcal{G}_t)$. Moreover a sufficient condition for \eqref{eq:proba} to hold is
\[
\mathbb{E}_p\left[ \exp\left({\int_0^T  \| \mathbf{\Phi}(t,\omega) \|^2\,dt}\right)\right] < \infty 
\]
\end{theorem}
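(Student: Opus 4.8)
The plan is to prove this by the standard Girsanov machinery, in three movements: an Itô computation showing the exponential is a local martingale, Lévy's characterisation to identify $\tilde{\mathbf{B}}$ as a Brownian motion, and a Novikov-type argument for the sufficient condition. Write $Z_t = \exp\{\zeta_0^t(\mathbf{\Phi},\omega)\}$, i.e. $\zeta_0^t = \int_0^t \mathbf{\Phi}(s)\,d\mathbf{B}_s - \frac12\int_0^t\|\mathbf{\Phi}(s)\|^2\,ds$. First I would apply Itô's formula to obtain $dZ_t = Z_t\,\mathbf{\Phi}(t)\cdot d\mathbf{B}_t$, with no drift term, so that $Z$ is a nonnegative continuous local martingale with $Z_0=1$. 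A nonnegative local martingale is a supermartingale, hence $\mathbb{E}_p[Z_T]\le 1$ unconditionally; the hypothesis $\tilde p(\Omega)=1$ is precisely $\mathbb{E}_p[Z_T]=1$, which forces the supermartingale to have constant expectation and therefore to be a genuine martingale on $[0,T]$. This guarantees that $\tilde p$ is a probability measure, and since $Z_t>0$ almost surely it is equivalent to $p$.

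Next I would establish that $\tilde{\mathbf{B}}$ is a $\tilde p$-Brownian motion by verifying the hypotheses of Lévy's characterisation: under $\tilde p$ each $\tilde B^i$ should be a continuous local martingale with $\langle \tilde B^i,\tilde B^j\rangle_t=\delta_{ij}t$. The enabling fact is that a process $M$ is a $\tilde p$-(local) martingale if and only if $MZ$ is a $p$-(local) martingale. I would therefore compute $d(\tilde B^i_t Z_t)$ by the product rule, using $d\tilde B^i_t = dB^i_t-\Phi^i(t)\,dt$ and $dZ_t=Z_t\sum_k\Phi^k(t)\,dB^k_t$. The cross-variation term $d\langle \tilde B^i,Z\rangle_t = Z_t\Phi^i(t)\,dt$ exactly cancels the finite-variation contribution $-Z_t\Phi^i(t)\,dt$, leaving
\[
d(\tilde B^i_t Z_t) = \tilde B^i_t Z_t\sum_{k}\Phi^k(t)\,dB^k_t + Z_t\,dB^i_t,
\]
a $p$-local martingale; hence each $\tilde B^i$ is a $\tilde p$-local martingale. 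The quadratic covariation is unchanged by this equivalent change of measure, being an almost-sure pathwise limit, and $\tilde{\mathbf{B}}$ differs from $\mathbf{B}$ only by the finite-variation term $\int_0^t\mathbf{\Phi}(s)\,ds$, so $\langle \tilde B^i,\tilde B^j\rangle_t=\langle B^i,B^j\rangle_t=\delta_{ij}t$. Lévy's theorem then yields that $\tilde{\mathbf{B}}$ is an $N$-dimensional $\tilde p$-Brownian motion for the same filtration $(\mathcal{G}_t)$.

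Finally, for the sufficient condition I would observe that, by Jensen's inequality applied to $y\mapsto y^{1/2}$, $\mathbb{E}_p\big[\exp(\tfrac12\int_0^T\|\mathbf{\Phi}\|^2\,dt)\big]\le \mathbb{E}_p\big[\exp(\int_0^T\|\mathbf{\Phi}\|^2\,dt)\big]^{1/2}$, so the stated hypothesis is strictly stronger than the classical Novikov condition $\mathbb{E}_p[\exp(\tfrac12\int_0^T\|\mathbf{\Phi}\|^2\,dt)]<\infty$. Under Novikov's condition the exponential local martingale $Z$ is a uniformly integrable true martingale, and in particular $\mathbb{E}_p[Z_T]=1$, i.e. $\tilde p(\Omega)=1$. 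I expect the genuinely delicate point to be this Novikov sufficiency: the Itô computation and the Lévy verification are essentially mechanical, whereas controlling the possible loss of mass $\mathbb{E}_p[Z_T]<1$ requires a localisation argument (stopping times $\tau_n$ reducing $Z$ to true martingales) together with a uniform-integrability estimate on $\{Z_{T\wedge\tau_n}\}$ obtained from the integrability hypothesis. Since the statement is quoted here only for later use, it would equally be legitimate simply to invoke the classical Cameron--Martin--Girsanov and Novikov theorems; the outline above gives a self-contained derivation modulo that one analytic estimate.
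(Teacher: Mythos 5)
The paper contains no proof of this statement: Theorem~\ref{theo:cameron-martin} is recalled as classical background (attributed to Cameron--Martin, with the change-of-drift form from Girsanov) and is only ever invoked as a black box, e.g.\ in the proof of Lemma~\ref{lem:cameronetal}. So your proposal should be judged on its own, and it is correct. The Itô computation $dZ_t = Z_t\,\mathbf{\Phi}(t)\cdot d\mathbf{B}_t$ is right, and your observation that $\tilde p(\Omega)=1$ is exactly $\mathbb{E}_p[Z_T]=1$, which upgrades the nonnegative supermartingale $Z$ to a true martingale with $Z_t=\mathbb{E}_p[Z_T\mid\mathcal{G}_t]$, is precisely what legitimises the transfer principle you use ($M$ is a $\tilde p$-local martingale iff $MZ$ is a $p$-local martingale); the product-rule cancellation $d\langle\tilde B^i,Z\rangle_t=Z_t\Phi^i(t)\,dt$ against $-Z_t\Phi^i(t)\,dt$ is correct, as is the invariance of the quadratic covariation under the equivalent change of measure, so Lévy's characterisation applies. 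Your Jensen step, $\mathbb{E}_p\bigl[e^{\frac12\int_0^T\|\mathbf{\Phi}\|^2dt}\bigr]\le\bigl(\mathbb{E}_p\bigl[e^{\int_0^T\|\mathbf{\Phi}\|^2dt}\bigr]\bigr)^{1/2}$, correctly shows that the paper's sufficient condition is stronger than Novikov's, whose conclusion $\mathbb{E}_p[Z_T]=1$ then gives \eqref{eq:proba}. The one ingredient you do not prove is Novikov's criterion itself, which you flag honestly; since the paper treats the entire theorem as citable, invoking Novikov is entirely in keeping with its level of rigour. Two small points worth making explicit in a write-up: first, the paper's $\mathcal{L}^2([0,T];\mathbb{R}^N)$ means adapted with $\int_0^T\|\mathbf{\Phi}(t)\|^2\,dt<\infty$ only almost surely, which is what makes the stochastic integral in $\zeta_0^T$ well defined as a local martingale and gives $Z_T>0$ a.s., hence the equivalence $\tilde p\sim p$ you use; second, in the Lévy step one should note that the integrands $\tilde B^i_tZ_t\Phi^k(t)$ and $Z_t$ are locally square-integrable by path continuity, so the displayed expression is indeed a $p$-local martingale after localisation.
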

\begin{remark}
We recall that the quantity $\frac{d\tilde{p}}{dp}$ is called the Radon-Nikodym derivative of the probability measure $\tilde{p}$ w.r.t. the probability measure $p$.
\end{remark}
We also recall that $\mathcal{L}^2([0,T];\mathbb{R}^N)$ denotes the set of $\mathbb{R}^N$-valued $\mathcal{G}_t$-adapted processes $(X_t)_{0\leq t\leq T}$ such that $\int_0^T \| X_t \|^2\,dt < \infty$ a.s..
\begin{theorem}[Girsanov]\label{theo:girsanov}
Consider  $\mathbf{B}$  an $N$-dimensional Brownian motion on \((\Omega, \mathcal{G}, (\mathcal{G}_t),p)\). 
Let $(X_t)_{0 \leq t \leq T}$ be the $N$-dimensional It\^o process given by
\[
{\mathbf X}_t = {\mathbf X}_0 + \int_0^t \mathbf{a}(s)\,d\mathbf{B}_s + \int_0^t \mathbf{b}(s)\,ds
\]
where $\mathbf{a},\,\mathbf{b} \in \mathcal{L}^2([0,T];\mathbb{R}^N)$ 
Let $\mathbf{\Phi} = (\Phi^1,\cdots,\Phi^N)$ be in $\mathcal{L}^2([0,T];\mathbb{R}^N)$.  
Let $\tilde{\mathbf{B}}$ and $\tilde{p}$ be defined as in Theorem~\ref{theo:cameron-martin}. 
If $\tilde{p}(\Omega)=1$ then $\mathbf{X}_t$ is still an It\^o process on the probability space $(\Omega,\mathcal{G},\tilde{p})$ w.r.t.  the same filtration $(\mathcal{G}_t)$ and satisfies
\[
\mathbf{X}_t = \mathbf{X}_0 + \int_0^t \mathbf{a}(s)\,d\tilde{\mathbf{B}}_s + \int_0^t \left(\mathbf{b}(s) + \mathbf{a}(s) \cdot \mathbf{\Phi}(s)\right)\,ds.
\]
\end{theorem}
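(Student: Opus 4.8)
The plan is to obtain the statement directly from Theorem~\ref{theo:cameron-martin} by inserting the relation between $\mathbf{B}$ and $\tilde{\mathbf{B}}$ into the It\^o decomposition of $\mathbf{X}$. Since we assume $\tilde p(\Omega)=1$, Theorem~\ref{theo:cameron-martin} already tells us that $\tilde{\mathbf{B}}$ is an $N$-dimensional $(\mathcal{G}_t)$-Brownian motion on $(\Omega,\mathcal{G},\tilde p)$. The key structural fact I would exploit is that $\frac{d\tilde p}{dp}=\exp\{\zeta_0^T(\mathbf{\Phi},\cdot)\}$ is strictly positive $p$-a.s., so $p$ and $\tilde p$ are mutually equivalent; consequently a statement holding $p$-a.s. also holds $\tilde p$-a.s.\ and vice versa, and convergence in $p$-probability is equivalent to convergence in $\tilde p$-probability.

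First I would record the pathwise identity $\mathbf{B}_t=\tilde{\mathbf{B}}_t+\int_0^t\mathbf{\Phi}(s)\,ds$, which is just the definition of $\tilde{\mathbf{B}}$ rearranged. Formally substituting $d\mathbf{B}_s=d\tilde{\mathbf{B}}_s+\mathbf{\Phi}(s)\,ds$ gives
\[
\int_0^t\mathbf{a}(s)\,d\mathbf{B}_s=\int_0^t\mathbf{a}(s)\,d\tilde{\mathbf{B}}_s+\int_0^t\mathbf{a}(s)\cdot\mathbf{\Phi}(s)\,ds,
\]
and plugging this into $\mathbf{X}_t=\mathbf{X}_0+\int_0^t\mathbf{a}(s)\,d\mathbf{B}_s+\int_0^t\mathbf{b}(s)\,ds$ and collecting the two drift terms yields the claim. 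The one delicate point --- and the main obstacle --- is that the left-hand stochastic integral is defined relative to $p$ while $\int_0^t\mathbf{a}(s)\,d\tilde{\mathbf{B}}_s$ is defined relative to $\tilde p$, so this identity between integrals constructed with respect to different measures must be justified.

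To make the substitution rigorous I would pick simple adapted processes $\mathbf{a}^{(n)}$ with $\int_0^T\|\mathbf{a}^{(n)}_s-\mathbf{a}_s\|^2\,ds\to 0$ in probability (the same in $p$- or $\tilde p$-probability, by equivalence). For a simple integrand both stochastic integrals are literally the same finite sum $\sum_k\mathbf{a}^{(n)}_{t_k}\cdot(\mathbf{B}_{t_{k+1}}-\mathbf{B}_{t_k})$, and rewriting each increment of $\mathbf{B}$ via $\mathbf{B}=\tilde{\mathbf{B}}+\int\mathbf{\Phi}$ shows the displayed identity holds exactly for $\mathbf{a}^{(n)}$. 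Letting $n\to\infty$, the left side converges in $p$-probability to $\int_0^t\mathbf{a}(s)\,d\mathbf{B}_s$, the stochastic term on the right converges in $\tilde p$-probability to $\int_0^t\mathbf{a}(s)\,d\tilde{\mathbf{B}}_s$, and the Lebesgue term converges in probability to $\int_0^t\mathbf{a}(s)\cdot\mathbf{\Phi}(s)\,ds$ by Cauchy--Schwarz; transporting all three convergences to the single measure $\tilde p$ and passing to an a.s.\ convergent subsequence yields the displayed integral identity $\tilde p$-a.s. Substituting it into the expression for $\mathbf{X}_t$, and using that $\mathbf{a},\mathbf{b}\in\mathcal{L}^2([0,T];\mathbb{R}^N)$ remains valid under $\tilde p$ (these are a.s.\ integrability conditions, preserved under the equivalent measure), gives
\[
\mathbf{X}_t=\mathbf{X}_0+\int_0^t\mathbf{a}(s)\,d\tilde{\mathbf{B}}_s+\int_0^t\left(\mathbf{b}(s)+\mathbf{a}(s)\cdot\mathbf{\Phi}(s)\right)\,ds
\]
$\tilde p$-a.s., which, together with $\tilde{\mathbf{B}}$ being a $(\mathcal{G}_t)$-Brownian motion under $\tilde p$, is exactly the assertion.
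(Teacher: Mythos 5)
Your argument is correct: it is the standard proof of this part of Girsanov's theorem, namely approximation of $\mathbf{a}$ by simple adapted processes, for which the two stochastic integrals are literally the same finite sum, followed by passage to the limit using the fact that $p$ and $\tilde{p}$ are mutually absolutely continuous (so convergence in probability transfers between them), with an a.s.-convergent subsequence to identify the limits. The paper itself offers no proof to compare against --- Theorem~\ref{theo:girsanov} is recalled as a classical result with a citation to Girsanov's original article --- so there is nothing further to reconcile; your treatment of the one genuinely delicate point, that $\int_0^t\mathbf{a}\,d\mathbf{B}$ and $\int_0^t\mathbf{a}\,d\tilde{\mathbf{B}}$ are constructed under different measures, is exactly the right one.
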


 As mentioned above, we consider a network of $N$ non-interacting neurons, the state of each one being described by \eqref{eq:isolated}:
 \begin{equation}\label{eq:networkisolated}
\begin{cases}
dY_t^i & =  g(Y_t^i)dt+\lambda dB_t^i
\quad 1 \leq i \leq N\\
\text{Law of } Y_0 & =  \mu_0^{\otimes N}.
\end{cases}
\end{equation}
The law of its solution is the probability measure $P^{\otimes N}$ on $\mathcal{C}^N$.

We relate \eqref{eq:networkisolated} and  \eqref{eq:network} through Theorems~\ref{theo:cameron-martin} and ~\ref{theo:girsanov}.
\begin{lemma}\label{lem:cameronetal}
The system \eqref{eq:network} has a unique weak solution. Its law, noted $P^N(\mathbf{J})$, is 
absolutely continuous with respect 
to the law $P^{\otimes N}$ of the uncoupled system. 
The Radon-Nikodym derivative is
\begin{equation}\label{eq:girsanov1}
\frac{d P^N(\mathbf{J)}}{d P^{\otimes N}} = \exp \left\{ \sum_i \frac{1}{\lambda} \int_0^T 
\sum_{j} \jij f(Y^j_t) 
dB^i_t  - 
\frac{1}{2\lambda^2} \int_0^T \left( \sum_{j} \jij f(Y^j_t) \right)^2\,dt \right\},
\end{equation}
where the $Y^j$s are solutions to \eqref{eq:networkisolated}.
\end{lemma}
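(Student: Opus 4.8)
The plan is to derive everything from the Cameron--Martin and Girsanov theorems (Theorems~\ref{theo:cameron-martin} and~\ref{theo:girsanov}) applied to the uncoupled system~\eqref{eq:networkisolated}, whose law $P^{\otimes N}$ on $\mathcal{C}^N$ is already understood. Fix a realisation of the weight matrix $\mathbf{J}$ and work on $(\Omega,\mathcal{G},(\mathcal{G}_t),p)$, on which $(Y^i)_{1\le i\le N}$ solves~\eqref{eq:networkisolated} driven by $\mathbf{B}=(B^1,\dots,B^N)$. Set
\[
\Phi^i(t,\omega) = \frac{1}{\lambda}\sum_{j=1}^N \jij\, f\bigl(Y^j_t(\omega)\bigr), \qquad i=1,\dots,N .
\]
Since each $Y^j$ is $(\mathcal{G}_t)$-adapted with continuous paths and $f$ is continuous, $\mathbf{\Phi}=(\Phi^1,\dots,\Phi^N)$ is an adapted process with continuous paths, hence belongs to $\mathcal{L}^2([0,T];\mathbb{R}^N)$.

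The first step is to verify the sufficient condition of Theorem~\ref{theo:cameron-martin}, namely $\mathbb{E}_p\bigl[\exp(\int_0^T\|\mathbf{\Phi}(t)\|^2\,dt)\bigr]<\infty$. In the H-model this is immediate from~\eqref{eq:sigbound}: $\|\mathbf{\Phi}(t)\|^2\le \lambda^{-2}C^2\sum_i(\sum_j|\jij|)^2$, a constant depending only on $\mathbf{J}$, so $\int_0^T\|\mathbf{\Phi}(t)\|^2\,dt$ is deterministically bounded and the exponential moment is trivially finite. In the S-model, $f(y)=y$ and $P^{\otimes N}$ charges only paths valued in $[-A,A]^N$ (this is the non-explosion statement recalled in Remark~\ref{rem:g}), hence $|f(Y^j_t)|\le A$ for $P^{\otimes N}$-a.e.\ $\omega$ and $\int_0^T\|\mathbf{\Phi}(t)\|^2\,dt$ is again bounded by a deterministic constant. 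Thus the hypotheses of Theorems~\ref{theo:cameron-martin} and~\ref{theo:girsanov} hold.

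Applying Girsanov's theorem with this $\mathbf{\Phi}$, with $\mathbf{a}=\lambda\,\mathrm{Id}$ and $\mathbf{b}^i=g(Y^i)$, produces a probability measure $\tilde p$, absolutely continuous with respect to $p$ with density $\exp\{\zeta_0^T(\mathbf{\Phi},\cdot)\}$, and a $\tilde p$-Brownian motion $\tilde{\mathbf{B}}$ such that, under $\tilde p$,
\[
dY^i_t = \bigl(g(Y^i_t)+\lambda\Phi^i_t\bigr)\,dt + \lambda\, d\tilde B^i_t = \Bigl(g(Y^i_t)+\sum_{j}\jij f(Y^j_t)\Bigr)\,dt + \lambda\, d\tilde B^i_t ,
\]
so that $(Y^i,\tilde B^i)_i$ is a weak solution of~\eqref{eq:network}; the initial law $\mu_0^{\otimes N}$ is unchanged since $\tilde p$ and $p$ agree on $\mathcal{G}_0$. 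Pushing forward to path space, the law $P^N(\mathbf{J})$ of this solution satisfies $\frac{dP^N(\mathbf{J})}{dP^{\otimes N}}=\exp\{\zeta_0^T\}$, and expanding $\zeta_0^T(\mathbf{\Phi},\cdot)=\int_0^T\mathbf{\Phi}(t)\,d\mathbf{B}_t-\frac12\int_0^T\|\mathbf{\Phi}(t)\|^2\,dt$ with the above $\Phi^i$ gives exactly~\eqref{eq:girsanov1}. For uniqueness in law, I would run the argument in reverse: given any weak solution $(\hat X^i,\hat\beta^i)_i$ of~\eqref{eq:network} with law $\mathbb{Q}$ on $\mathcal{C}^N$ (in the S-model $\mathbb{Q}$ charges $[-A,A]^N$-valued paths, by the same non-explosion estimate applied to the coupled equation), apply Girsanov to $(\hat X^i)$ with $\hat\Phi^i(t)=-\frac1\lambda\sum_j\jij f(\hat X^j_t)$, whose exponential integrability holds by the identical boundedness bounds. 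This removes the interaction drift, so under the transformed measure the canonical process solves the uncoupled system~\eqref{eq:networkisolated}, whose law is $P^{\otimes N}$ because $g$ is chosen so that~\eqref{eq:isolated} has a unique (strong, hence weak) solution; inverting the Radon--Nikodym derivative then forces $\mathbb{Q}=P^N(\mathbf{J})$.

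The only genuinely delicate point is the S-model, where $g(y)=-2y/(A^2-y^2)$ is singular at $\pm A$ and one must ensure that both the uncoupled and the coupled processes a.s.\ remain in $(-A,A)^N$, so that all the stochastic integrals above are well defined and the bounds on $\mathbf{\Phi}$ and $\hat{\mathbf{\Phi}}$ are valid. This is controlled by the explosion-time estimate $p(T_\varepsilon\le T)\le e^T/(1+k_U(A-\varepsilon))\to 0$ recalled in Remark~\ref{rem:g}, applied to the drift $g$ for~\eqref{eq:networkisolated} and, after the Girsanov tilt, to the solution of~\eqref{eq:network}; in the H-model no such issue arises. Everything else is a routine check of the Cameron--Martin/Girsanov hypotheses.
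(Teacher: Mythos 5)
Your proposal is correct and follows essentially the same route as the paper: choose $\Phi^i(t)=\frac{1}{\lambda}\sum_j \jij f(Y^j_t)$, verify the exponential integrability condition using the boundedness of $f$ (H-model) or the confinement of the paths to $[-A,A]$ (S-model), and apply the Cameron--Martin/Girsanov theorems to read off the weak solution and the Radon--Nikodym derivative \eqref{eq:girsanov1}. You in fact go a bit further than the paper on the uniqueness claim, which the paper merely asserts at the end of its proof, by sketching the standard reverse Girsanov tilt that maps any weak solution back to the uncoupled system; that addition is sound and, as you note, the only genuinely delicate point is checking that the non-explosion estimate of Remark~\ref{rem:g} survives the bounded drift perturbation in the S-model.
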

\begin{proof}
We apply Theorems~\ref{theo:cameron-martin} and~\ref{theo:girsanov}.

Let $\Phi \in \mathcal{L}^2([0,T];\mathcal{C}^N)$ and define 
\[
\zeta_0^T = \sum_{i=1}^N \int_0^T \Phi^i(t)\,dB^i_t-\frac{1}{2} \int_0^T \Phi^i(t)^2\,dt
\]
The Cameron-Martin Theorem~\ref{theo:cameron-martin} states that if 
\begin{equation}\label{eq:novikov}
\mathbb{E}_{P^{\otimes N}}\left[ e^{\int_0^T \| \Phi(t) \|^2\,dt} \right] < \infty
,  
\end{equation}
we can define for all $t \in [0,T]$
\[
\tilde{B}^i_t = B^i_t -\int_0^t \Phi^i(s) \,ds,
\]
and 
\[
d P^N(\mathbf{J}) = \exp\left\{ \zeta_0^T\right\})dP^{\otimes N},
\]
which are such that $P^N(\mathbf{J})$ is a probability measure on $\mathcal{C}^N$ and $(\tilde{B^i})_i$ an $N$-dimensional Brownian motion under $P^N(\mathbf{J})$. 
Moreover the Girsanov Theorem states that the $Y^i$s satisfy also
\[
Y^i_t = Y_0 + \int_0^t \left( g(Y^i_s)+ \lambda \Phi^i(s) \right) \,ds    + \lambda \tilde{B}^i_t,\, i=1,\cdots,N.
\]
The choice of
\[
\Phi^i(t) = \frac{1}{\lambda} \sum_ {j =1}^N \jij f(Y^j_t),\ i=1,\cdots,N
\]
shows that, if \eqref{eq:novikov} is satisfied, the $Y^i$s, which we rewrite $X^i$s,  are solutions to \eqref{eq:network} with the Brownian motion $\tilde{B}$ under $P^N(\mathbf{J})$ and that the Radon-Nikodym derivative of $P^N(\mathbf{J})$ w.r.t. to $P^{\otimes N}$ is given by \eqref{eq:girsanov1}.

Condition \eqref{eq:novikov} is clearly satisfied for the S-model since the $Y^i$s take values in $]-A,A[$ and for the H-model since $f$  
is assumed to be bounded, see \eqref{eq:sigbound}. 
This proves that \eqref{eq:network} has a unique weak solution whose law is $P^N(\mathbf{J})$. 
 \end{proof}
\subsection{Statement of the problem}
\red{The task at hand is to describe the asymptotic behaviour of the system \eqref{eq:network} as \(N\) increases.
 Unlike in previous work we do not assume the weights to be centred Gaussian random variables.
In order to do this, we introduce the following classical tools:}

Let \(X^i\) be the solution of \eqref{eq:network}, we denote \(\hat{\mu}_N\) the associated  empirical measure 
\[
\hat{\mu}_N = \frac{1}{N} \sum_{i=1}^N \delta_{X^i}.
\]
The measure $\hat{\mu}_N$ is a probability measure on the set of trajectories $\mathcal{C}$.  Let us give an example of
how it acts on test functions: Consider the function \(\varphi\) defined  by 
\[
\forall Y \in \mathcal{C}, \quad \varphi(Y) = f(Y_t) f(Y_s).
\]
The integral of \(\varphi\) w.r.t. $\hat{\mu}_N$ yields
\begin{equation}\label{eq:intwrtemp}
\left<\varphi, \hat{\mu}_N\right> = \int \varphi(Y)\,d\hat{\mu}_N(Y) = \frac{1}{N} \sum_{i=1}^N f(X^i_t)f(X^i_s).
\end{equation}

\section{Results}\label{sect:previous}

\subsection{The results of Ben Arous and Guionnet~\cite{ben-arous-guionnet:95,guionnet:97}}
This is the case of the S-model where the weights are centered i.e. $J=0$ in \eqref{eq:synweights}, and $\lambda=1$. Note that our parameter $\sigma$, the standard deviation of the synaptic weights, is the same as their parameter $\beta$, the inverse temperature.
Key to their approach is the averaging of the law $P^N(\mathbf{J})$ with respect to 
$\mathbf{J}$ and replacing the initial problem by that of studying the limit behaviour of the empirical measure under the averaged law $Q^N$ given by 
\[
Q^N = \int_\Theta P^N(\mathbf{J}(\theta))\,d\gamma(\theta) = \mathcal{E}[P^N(\mathbf{J})]
\]
A summary of the essential  results is as follows
\begin{description}
\item[Existence and uniqueness of an averaged (annealed) limit] There exists a unique probability law $Q$ on $\mathcal{C}$ such that the law of the empirical measure $\hat{\mu}_N$ under $Q^N$ converges to $\delta_Q$ (Theorem 2.1 in \cite{guionnet:97}).
\item[Averaged (annealed) propagation of chaos] $Q^N$ propagates chaos, or is $Q$-chaotic, i.e. given $m \geq 1$ and $m$ continuous bounded functions $(\varphi_j)_{j=1, \cdots, m}$ defined on $\mathcal{C}$
\[
\lim_{N \to \infty} \mathbb{E}_{Q^N} \bigg[\prod_{j=1}^m \varphi_j(X^j)\bigg] = \prod_{j=1}^m \mathbb{E}_Q\bigg[ \varphi_j(X)\bigg].
\]
This is not very useful in practice because 
	the use of $Q^N$ in the left hand side requires to average over the weights $\mathbf{J}$ but they also prove
\item[Existence and uniqueness of a quenched limit] 
The law of the empirical measure of the quenched system, i.e. the law of $\hat{\mu}_N$ under $P^N(\mathbf{J})$, converges to $\delta_Q$ for almost all $\mathbf{J}$s (Theorem 2.7 in \cite{ben-arous-guionnet:95}). This of course does not imply a quenched propagation of chaos since the neurons are not exchangeable for almost all weights but 
	they also prove
\item[Quenched propagation of chaos]
If the function $g$ in \eqref{eq:isolated} and \eqref{eq:network} is the gradient of an even function\footnote{Note that this condition is satisfied in the S- and H-models.}  and  the initial law $\mu_0$ is symmetric, then for any set of $m$ continuous bounded functions $(\varphi_j)_{j=1,\cdots,m}$ defined on $\mathcal{C}$
\[
\mathbb{E}_{P^N(\mathbf{J})} \bigg[ \prod_{j=1}^m \varphi_j(X^j)\bigg] 
\xrightarrow[N\to\infty]{p}
\prod_{j=1}^m \mathbb{E}_Q\bigg[\varphi_j(X)\bigg].
\]
This means that for all $\varepsilon > 0$
\[
\lim_{N \to \infty} \gamma\left( \theta \, :  \left| \mathbb{E}_{P^N(\mathbf{J})} \bigg[ \prod_{j=1}^m \varphi_j(X^j)\bigg] - \prod_{j=1}^m \mathbb{E}_Q\bigg[\varphi_j(X)\bigg]\right| > \varepsilon  \right) = 0.
\]
\item[Characterisation of the limit law] The limit law $Q$ is 
characterised by its Radon-Nikodym derivative w.r.t. to the law $P$
\begin{equation}\label{eq:limitRN}
\frac{dQ}{dP} = \int_{\red{\Theta}} \exp \left\{ \int_0^T G_t^Q\red{(\theta)} \,dB_t - \frac{1}{2} \int_0^T (G_t^Q\red{(\theta)})^2\,dt      \right\}\,d\gamma_Q\red{(\theta)},
\end{equation}
where $G^Q$ is under $\gamma_Q$ a centered Gaussian process with covariance function $K_Q(t,s) = \sigma^2 \mathbb{E}_Q[f(X_t) f(X_s)]$. 
Moreover this limit law is unique and solution of the following stochastic system
\begin{equation}\label{eq:limit}
\begin{cases}
X_t  & =  X_0+\int_0^t g(X_s)\,ds +  B_t\\
B_t &=  W_t+  \int_0^t\int_0^s \tilde{K}^s_Q(s,u)\,dB_u\,ds \\
\text{Law of }X & = Q, \quad Q_{\mathcal{F}_0} = \mu_0\\
K_Q(t,s) & =  \sigma^2\mathbb{E}_Q\big[ f(X_t) f(X_s)\big],
\end{cases}
\end{equation}
where $W_t$ is a Brownian motion under $Q$. 

The deterministic function $\tilde{K}^t_Q$ is defined as follows: Let  $(G^Q_t)_{t \in [0,T]}$ be a centered Gaussian process with covariance $K_Q$, one defines
\[
\tilde{K}_Q^t(s,u) = \frac{\int_\Theta \exp\left\{ -\frac{1}{2} \int_0^t (G^Q_v(\theta))^2 \,dv \right\}\,G^Q_s(\theta) G^Q_u(\theta) \,d\gamma_Q(\theta)}{ \int_\Theta \exp \left\{ -\frac{1}{2} \int_0^t (G^Q_v(\theta))^2 \,dv \right\} \,d\gamma_Q(\theta) }  \quad \forall\, 0 \leq s, u\, \leq t,
\]
where $G^Q$ is as before, under $\gamma_Q$, a centered Gaussian process with covariance $K_Q$.
Note that this is equivalent to changing $\gamma_Q$ into $\tilde{\gamma}_Q$ defined by
\[
d\tilde{\gamma}_Q^t(\theta) = \dfrac{\exp\left( -\frac{1}{2} \int_0^t (G^Q_v(\theta))^2 \,dv \right)}{\int_\Theta \exp \left( -\frac{1}{2} \int_0^t (G^Q_v(\theta))^2 \,dv \right)
d\gamma_Q(\theta)}\,d\gamma_Q(\theta),
\]
so that
\[
\tilde{K}_Q^t(s,u) = \int_\Theta G^Q_s(\theta) G^Q_u(\theta) \,d\tilde{\gamma}_Q^t(\theta).
\]
\end{description}

Equations \eqref{eq:limit} look a bit mysterious and our next goals are a) to generalise them to the non-zero mean weights case, i.e. $J \neq 0$ in \eqref{eq:synweights}, and b) to explore the relationship between the Brownian motions $B_t$ and $W_t$ as well as that between $K_Q$ and $\tilde{K}_Q^{t}$.
\subsection{The result of Dembo, Lubetzky and Zeitouni \cite{dembo_universality_2021}}\label{subsec:nongaussian}
This is also the case of the S-model in which the weights are still assumed zero mean but \textbf{without} the assumption that their distribution is Gaussian. They prove that under the following mild assumptions
\begin{equation}\label{eq:dembo-conds}
\begin{cases}
\mathcal{E}[\jij]=0 ,\ \mathcal{E}[\jij^2] = \frac{\sigma^2}{N}, & \\
\exists \varepsilon_0 > 0 \text{ \rm such that } \sup_{i,\,j,\,N} \left\{\mathcal{E}[e^{\varepsilon_0 \sqrt{N} |\jij|}] \right\}< \infty,
\end{cases}
\end{equation}
the empirical measure $\hat{\mu}_N$ converges weakly almost surely in the interactions $\mathbf J$ toward the {\textbf{same} measure $\delta_Q$ as in the Gaussian case.

\subsection{Statement of the results}\label{sect:ourresults}
We have obtained three main results. The first one is a characterization of the dynamics of the trajectories of the limit measure $Q$, this is Theorem~\ref{theo:dynmean}. 
The equations \eqref{eq:dynmean} of this dynamics contain the Brownian motion under $P$ and the Brownian motion under $Q$ which are related by an integral equation. 
Our second contribution is to solve this equation to obtain a simple relation between these two Brownian motions featuring functions of the covariance 
$K_Q$, this is Proposition~\ref{prop:BtWt}. The third contribution is a proof of the uniqueness of the limit measure $Q$, this is Theorem~\ref{theo:uniqueness}. 
It is a constructive Picard-like method that can be used to effectively construct $Q$ and simulate its trajectories. Our proof is different from the one in 
\cite{ben-arous-guionnet:95} which is valid for the S-model for zero-mean weights ($J=0$) and carries over to the nonzero-mean case. Note that, 
because of \cite{dembo_universality_2021}, our results are independent of the distribution of the $\jij$s, provided they satisfy the assumptions \eqref{eq:dembo-conds}.
\section{Non zero-mean weights}\label{sect:nonzero}
It is more relevant to neuroscience to consider the case where the weights are distributed as \eqref{eq:synweights} with $J \neq 0$. This complicates quite a bit the limit dynamics as we show now.

If $Q^N$ is the average of the law $P^N(\mathbf{J})$ given by \red{\eqref{eq:girsanov1}} w.r.t. the weights  and $P$ the law of the uncoupled neuron one finds, using  the independence property of the 
weights that
\begin{multline}\label{eq:girsanov}
\frac{dQ^N}{dP^{\otimes N}} = 
\int \prod_{i=1}^N \exp \left\{ \int_0^T \left( \frac{1}{\lambda}\sum_{j=1}^N \jij(\theta) f(Y^j_t) \right)\,dB^i_t \right.\\
- 
\left.\frac{1}{2}  \int_0^T \left( \frac{1}{\lambda}\sum_{j=1}^N \jij(\theta) f(Y^j_t) \right)^2\,dt  \right\}\,d\gamma(\theta),
\end{multline}
where the $Y^j$s in the above formula are the solutions of the uncoupled system
\begin{equation}\label{eq:uncoupledsystem}
dY^j_t = g(Y^j_t) dt + \lambda dB^j_t \quad \quad j=1,\cdots,N
\end{equation}
and are therefore not functions of the $\jij$s.

If one defines
\[
G^i_t(\theta) =\sum_{j=1}^N \jij(\theta) f(Y^j_t),
\]
one observes that, conditionally on \(\mathcal{F}^{B^1, \cdots, B^N}_t\) and \(Y^1_0, \cdots, Y^N_0\), the $G^i$s, $i=1,\cdots, N$ are Gaussian processes whose mean and covariance are easily calculated using equations similar to \eqref{eq:intwrtemp}:
\[
\mathcal{E} [G_t^i] = \frac{J}{N } \sum_{j=1}^N f(Y^j_t) = J \int f(Y_t) \, d\hat{\mu}_N(Y),
\]
and
\[
\mathcal{E} [G_t^i G_s^i ] = \sigma^2 \int f(Y_t) f(Y_s) \, d\hat{\mu}_N(Y) + J^2 \int f(Y_t) \, d\hat{\mu}_N(Y) \times  \int f(Y_s) \, d\hat{\mu}_N(Y),
\]
where $\hat{\mu}_N(Y)$ is the empirical measure
\[
\hat{\mu}_N(Y) = \frac{1}{N} \sum_{j=1}^N \delta_{Y^j}.
\]
Note that because the weights $\jij$ are independent, the processes $(G^i_{t})$ and $(G^j_{t})$, $i \neq j$, are conditionally independent given
 \(\mathcal{F}_t^{B^1,\cdots,B^N}\) and  \(Y^1_0, \cdots, Y^N_0\).

Under $\gamma$, the law of each $G^i_t$ is $\gamma_{\hat{\mu}_N}$, meaning that $G^i_t$, conditionally  on 
\(\mathcal{F}_t^{B^1,\cdots,B^N}\) and  \(Y^1_0, \cdots, Y^N_0\), is a Gaussian process with mean $ m_{\hat{\mu}_N}(t) = J\int f(Y_t)\, d\hat{\mu}_N(Y) = \frac{J}{N} \sum_j f(Y^j_t)$, and covariance 
\[
K_{\hat{\mu}_N}(t,s) = 
\sigma^2 \int f(Y_t) f(Y_s) \, d\hat{\mu}_N(Y) = 
\frac{\sigma^2}{N} \sum_{j=1}^N f(Y^j_t)f(Y^j_s).
\]
Having noticed this, it is possible to generalise the approach in \cite{ben-arous-guionnet:95,guionnet:97} and obtain that the limit law $Q$ is characterised  by the following modification of \eqref{eq:limitRN}
\begin{equation}\label{eq:RN}
\frac{dQ}{dP} = \int \exp \left\{ \frac{1}{\lambda}\int_0^T G_t^Q dB_t -  \frac{1}{2\lambda^2}  \int_0^T (G_t^Q)^2 dt  \right\} \,d\gamma_Q,
\end{equation}
where \(B\) is a \(P\) Brownian motion and, conditionally on \(\mathcal{F}^B_t\)  and $\mu_0$, 
	$G_t^Q$ is under $\gamma_Q$ a Gaussian process with mean $m_Q(t) := J \mathbb{E}^Q[f(Y_t)]$ and covariance $K_Q(t,s) := \sigma^2 \mathbb{E}^Q[f(Y_t) f(Y_s)]$. 

The main result of this section is the mean field dynamics:
	\begin{theorem}\label{theo:dynmean}
		The law $Q$ is that of the solutions of the following system
		\begin{equation}\label{eq:dynmean}
			\begin{cases}
				X_t & = X_0 + \displaystyle\int_0^t g(X_s) \, ds +  J \displaystyle\int_0^t \mathbb{E}^Q[f(X_s)]\,ds +  C_t\\
				C_t & =  \lambda W_t + \displaystyle \int_0^t \int_0^s \dfrac{1}{\lambda^2}\tilde{K}_Q^{s,\lambda}(s,u) \,dC_u\,ds \\
				K^{t,\lambda}_Q(t,s) &= \sigma^2 \mathbb{E}^Q\big[ f(X_t)f(X_s)\big]\\
				\text{{\rm Law of }} X & =  Q, \quad Q_{|\mathcal{F}_0} = \mu_0
			\end{cases}
		\end{equation}
		where 
		$W$ is a $Q$-Brownian motion and $\tilde{K}_Q^{t,\lambda}$ is given by \eqref{eq:KQtildeopb}.
	\end{theorem}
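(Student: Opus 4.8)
The plan is to read the dynamics of $X$ under $Q$ off the Radon--Nikodym characterisation \eqref{eq:RN} by a Girsanov computation, treating its right-hand side as a \emph{mixture} over $\theta$ of exponential martingales. Write $\frac{dQ}{dP} = \int \mathcal{D}_T(\theta)\,d\gamma_Q(\theta)$ with
\[
\mathcal{D}_t(\theta) = \exp\Big\{ \tfrac{1}{\lambda}\int_0^t G^Q_s(\theta)\,dB_s - \tfrac{1}{2\lambda^2}\int_0^t (G^Q_s(\theta))^2\,ds\Big\},
\]
which for each fixed $\theta$ is a $P$-exponential martingale; the integrability guaranteeing that the density is a genuine probability is the Novikov-type bound already established in Lemma~\ref{lem:cameronetal} (boundedness of $f$ in the H-model, boundedness of the state in the S-model). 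The density process $L_t = \int \mathcal{D}_t\,d\gamma_Q$ is a $P$-martingale with $dL_t = \tfrac{1}{\lambda}\big(\int \mathcal{D}_t\,G^Q_t\,d\gamma_Q\big)\,dB_t$, so Theorem~\ref{theo:girsanov} produces a $Q$-Brownian motion $W$ with
\[
\lambda B_t = \lambda W_t + \int_0^t \frac{\int \mathcal{D}_s(\theta)\,G^Q_s(\theta)\,d\gamma_Q(\theta)}{\int \mathcal{D}_s(\theta)\,d\gamma_Q(\theta)}\,ds,
\]
and hence, since $X_t = X_0+\int_0^t g(X_s)\,ds+\lambda B_t$, the drift of $X$ beyond $g$ is exactly the tilted conditional mean of $G^Q$.

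First I would split $G^Q = m_Q + \bar G^Q$ into its deterministic mean $m_Q(t)=J\,\mathbb{E}^Q[f(X_t)]$ and a centred Gaussian fluctuation $\bar G^Q$ of covariance $K_Q$. The contributions to $\mathcal{D}_t$ involving $m_Q$ alone are deterministic and factor out of numerator and denominator, so the tilted mean of $G^Q$ equals $m_Q(t)$ plus the tilted mean of $\bar G^Q$; the first piece yields precisely the mean-field drift $J\,\mathbb{E}^Q[f(X_t)]$ of the first line of \eqref{eq:dynmean}. Setting $C_t := \lambda B_t - \int_0^t m_Q(s)\,ds$, so that $X_t = X_0+\int_0^t g(X_s)\,ds + J\int_0^t\mathbb{E}^Q[f(X_s)]\,ds + C_t$ as required, the noise obeys $C_t = \lambda W_t + \int_0^t \big(\int \bar{\mathcal D}_s\bar G^Q_s\,d\gamma_Q \,/\, \int\bar{\mathcal D}_s\,d\gamma_Q\big)\,ds$. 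The key algebraic simplification is that under $\lambda\,dB_u = dC_u + m_Q(u)\,du$ the cross term $-\tfrac1{\lambda^2}\int_0^t m_Q\,\bar G^Q\,du$ in $\mathcal{D}_t$ cancels exactly against part of $\tfrac1\lambda\int_0^t\bar G^Q\,dB$, so that the surviving tilt takes the clean linear-plus-quadratic form $\bar{\mathcal D}_t(\theta)=\exp\{\tfrac1{\lambda^2}\int_0^t\bar G^Q_u\,dC_u - \tfrac1{2\lambda^2}\int_0^t(\bar G^Q_u)^2\,du\}$.

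It then remains to evaluate the centred drift $\int\bar{\mathcal D}_t\,\bar G^Q_t\,d\gamma_Q/\int\bar{\mathcal D}_t\,d\gamma_Q$. Conditionally on $\mathcal{F}^B_t$ (equivalently $\mathcal{F}^C_t$), $\bar G^Q$ is Gaussian and $\bar{\mathcal D}_t$ is the exponential of a linear-plus-quadratic functional of it, so the tilted law is again Gaussian: the quadratic part $-\tfrac1{2\lambda^2}\int_0^t(\bar G^Q_u)^2\,du$ replaces the covariance $K_Q$ by the tilted covariance $\tilde K_Q^{t,\lambda}$ of \eqref{eq:KQtildeopb}, while the linear part $\tfrac1{\lambda^2}\int_0^t\bar G^Q_u\,dC_u$ shifts the mean of $\bar G^Q_t$ by $\tfrac1{\lambda^2}\int_0^t\tilde K_Q^{t,\lambda}(t,u)\,dC_u$. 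This identifies the centred drift with $\tfrac1{\lambda^2}\int_0^t\tilde K_Q^{t,\lambda}(t,u)\,dC_u$ and hence gives the Volterra equation $C_t = \lambda W_t + \int_0^t\int_0^s \tfrac1{\lambda^2}\tilde K_Q^{s,\lambda}(s,u)\,dC_u\,ds$ of the second line of \eqref{eq:dynmean}; the third line is merely the definition of $K_Q$, which closes the system self-consistently.

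I expect the Gaussian-tilting step to be the main obstacle. It must be justified for the infinite-dimensional process $\bar G^Q$ on $[0,t]$ — most safely by discretising $[0,t]$, completing the square in the resulting finite-dimensional Gaussian integral to identify the conditional mean as the tilted covariance applied to the linear coefficient, and passing to the limit — and one must check that the stochastic integrals $\int_0^t\bar G^Q_u\,dC_u$ (defined through $C=\lambda B-\int m_Q$ as Itô integrals against $B$, with $\bar G^Q$ independent of $B$ under $P\otimes\gamma_Q$) are well defined and that the $\gamma_Q$-integration commutes with the It\^o integration. As a consistency check, the specialisation $J=0$, $\lambda=1$ should reduce \eqref{eq:dynmean} to \eqref{eq:limit}.
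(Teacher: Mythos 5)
Your proposal is correct, and it reaches \eqref{eq:dynmean} by a genuinely different route from the paper. The paper works at the terminal time: it splits $G = m_Q + R$, computes the $\gamma_Q$-integral in \eqref{eq:RN} explicitly by Gaussian calculus (producing double stochastic integrals against the terminal kernel $\tilde{K}_Q^T$), and then needs three applications of It\^o's formula together with Isserlis' theorem precisely to convert these terminal-kernel expressions into the adapted exponential-martingale form \eqref{eq:fin} featuring the running kernels $\tilde{K}_Q^t$; only then is Girsanov's Theorem~\ref{theo:girsanov} applied, and Proposition~\ref{prop:BtWt} is invoked twice to disentangle $B$, $W$ and $C$. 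You instead differentiate the density process $L_t = \int \mathcal{D}_t\,d\gamma_Q$ in time and read the Girsanov drift off as the Kallianpur--Striebel posterior mean $\int \mathcal{D}_s G_s\,d\gamma_Q \big/ \int \mathcal{D}_s\,d\gamma_Q$; the running kernels then appear automatically (the tilt at time $s$ involves the path only on $[0,s]$, consistent with Remark~\ref{rem:Ktildetspecial}), the It\^o--Isserlis machinery is bypassed entirely, and the Volterra equation for $C_t := \lambda B_t - \int_0^t m_Q(s)\,ds$ drops out directly, with no need for Proposition~\ref{prop:BtWt} inside the proof. Your cancellation of the cross term upon substituting $\lambda\,dB = dC + m_Q\,dt$ is exact, and your identification of the tilted covariance with \eqref{eq:KQtildeopb} and of the tilted mean with $\frac{1}{\lambda^2}\int_0^t \tilde{K}_Q^{t,\lambda}(t,u)\,dC_u$ is the correct infinite-dimensional complete-the-square formula. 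What the paper's longer route buys is the explicit resolvent representation \eqref{eq:browniansrelation} of $B$ in terms of $W$ --- hence Remark~\ref{rem:Pbrownian} and the closed-form effective interaction term of Remark~\ref{rem:effective} --- which your implicit Volterra form does not deliver by itself (you would still need Proposition~\ref{prop:BtWt} for that, and to see that $C$ is adapted to $W$). Two steps you flag genuinely need writing out, since they are not covered by the paper's toolbox as stated: Lemma~\ref{lem:KKtilde} handles only the quadratic tilt (covariance), so the mean-shift formula under the additional linear tilt $\frac{1}{\lambda^2}\int \bar{G}\,dC$ must be proved, e.g.\ by your proposed discretisation of $[0,t]$ and passage to the limit (this is in substance what the paper's three ``Dealing with the term'' computations verify by It\^o calculus at time $T$); and the stochastic Fubini interchange giving $dL_t = \frac{1}{\lambda}\big(\int \mathcal{D}_t G_t\,d\gamma_Q\big)\,dB_t$, together with the true-martingale property of $L$, which follows routinely from the boundedness of $K_Q(t,t)$ guaranteed by \eqref{eq:sigbound} in the H-model and $|X_t| \leq A$ in the S-model. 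With those two verifications supplied, your argument is a complete and arguably shorter proof of Theorem~\ref{theo:dynmean}.
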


\begin{remark}\label{rem:Pbrownian}
The proof of Theorem~\ref{theo:dynmean} shows that the $P$-Brownian motion $B$ is obtained by
\begin{equation}\label{eq:Pbrownian}
B_t =  \dfrac{1}{\lambda}\left[\int_0^t m_Q(s)\,ds + C_t\right]
\end{equation}
\end{remark}

\begin{remark}\label{rem:lambda_changement_variable}
It is sufficient to prove Theorem~\ref{theo:dynmean} in the particular case where \(\lambda=1\). 
Indeed, for any \(\lambda\), we set \(\bar{X}^i_t = \frac{1}{\lambda}X^i_t\)  and the particle system \(\bar{X}^i\)
evolves with the dynamics \eqref{eq:network} with \(\lambda = 1\), \(\bar{f}(x) = \frac{1}{\lambda}f(\lambda x)\)
and \(\bar{g}(x) = \frac{1}{\lambda}g(\lambda x)\).
Thus, in the following, we state the intermediate results for a general \(\lambda\)
but for the sake of clarity we write the details only for \(\lambda = 1\). We also simplify the
notation \(K^{t,\lambda}\) in \(K^{t}\).
\end{remark}

Before we can give the proof we need to take a short detour and discuss the interpretation of covariance functions such as $K_Q$ as linear operators acting on functions in $L^2([0,t]),\,0\leq t \leq T$ and the solutions to Volterra equations of the second kind.
\subsection{Covariance functions and linear operators}\label{sect:hstc}
We consider the set of correlation functions $K$ (symmetric positive kernels on $[0,T] \times [0,T]$), such that
\begin{equation}\label{eq:tcnorm}
\int_0^T K(s,s) \,ds < \infty.
\end{equation}
Note that this is true for the correlation functions arising from any probability measure on $\mathcal{C}$ through the fourth equation in \eqref{eq:limit} because for the S-model where $f(y) = y$ the $X_t$s are bounded by $\pm A$ and for the H-model $f$ is bounded, see \eqref{eq:sigbound}.

Given such a correlation function $K$ we associate to it the linear operator $\bar{K}^t$ on $L^2([0,t])$ by
\begin{equation}\label{eq:barK}
(\bar{K}^t \cdot \psi)(s) = \int_0^t K(s,u) \psi(u) \,du\quad 0 \leq s \leq t,\,\forall \psi \in L^2([0,t]).
\end{equation}
In this context $K$ is called the kernel of the operator $\bar{K}^t$.

Such an operator is called a Trace-Class operator (because of \eqref{eq:tcnorm}) and the corresponding norm is noted
\begin{equation}\label{eq:tcnormdef}
\Vert \bar{K}^t \Vert_{t,1} = \int_0^t |K(s,s)| \,ds,\  0 < t \leq T.
\end{equation}
It is known, e.g. \cite{reed_functional_1981}, that the set of Trace-Class operators on $L^2([0,t])$ is a subset of the set of Hilbert-Schmidt operators equipped with the norm
\begin{equation}\label{eq:hsnormdef}
\Vert \bar{K}^t \Vert_{t,2} = \left( \int_0^t \int_0^t K(s,u)^2 \,ds\,du\right)^{1/2},\  0 < t \leq T
\end{equation}
and that the norms satisfy
\begin{equation}\label{eq:tchs}
\Vert \bar{K}^t \Vert_{t,2}  \leq \Vert \bar{K}^t \Vert_{t,1} ,\  0 < t \leq T.
\end{equation}
Given two such operators $\bar{K}_1^t$ and $\bar{K}_2^t$ their composition $\bar{K}_{12}^t=\bar{K}_1^t \circ \bar{K}_2^t$ is defined by
\begin{equation}\label{eq:iterate1}
K_{12}^t(s,u) = \int_0^t K_1^t(s,v)K_2^t(v,u)\,dv\quad \forall \, s,\,u\, \in [0,t]
\end{equation}

	Given such a correlation operator $\bar{K}^t$ we can define on $L^2([0,t])$ another operator, noted $\bar{\tilde{K}}^{t,\lambda}$ by 
	\begin{equation}\label{eq:KQtildeopb}
		\bar{\tilde{K}}^{t,\lambda} = \bar{K}^t \circ \left({\rm Id}+ \dfrac{1}{\lambda^2}\bar{K}^t \right)^{-1} = \left({\rm Id}+ \dfrac{1}{\lambda^2}\bar{K}^t\right)^{-1} \circ \bar{K}^t,
	\end{equation}
$\lambda$ being a non zero real.

\begin{remark}\label{rem:Ktildetspecial}
The reader may be puzzled by the presence of the upper index $t$ in $\bar{K}^t$ and $\bar{\tilde{K}}^{t, \lambda}$, This is meant to indicate that these operators act on functions of $L^2([0,t])$. This would be unnecessary if the operators acting on functions of $L^2([0,s])$, $0 < s < t$ were the restrictions of those acting on functions of $L^2([0,t])$. This is true for $\bar{K}$ but it is \emph{not} true in general for $\bar{\tilde{K}}$.
\end{remark}

	\begin{remark}\label{rem:contraction}
		Note that because $K$ is a correlation function, $\bar{K}^t$ has positive eigenvalues, hence ${\rm Id}+\frac{1}{\lambda^2}\bar{K}^t$ is invertible, and its inverse $({\rm Id}+\frac{1}{\lambda^2}\bar{K}^t)^{-1}$ is contracting.
	\end{remark}

	The relation between $\bar{K}^t$ and $\bar{\tilde{K}}^{t,\lambda}$ is given by the following Lemma (see Lemma A1 in \cite{ben-arous-guionnet:95}):
	\begin{lemma}\label{lem:KKtilde}
		Let $(G_t)_{0 \leq t \leq T}$ be a centered Gaussian process with correlation function $K$ under the law $\nu$. Under the law $\tilde{\nu}^t$ defined
		by
		\[
		d\tilde{\nu}^t(\theta) = \frac{\exp\{-\frac{1}{2\lambda^2}\int_0^t G_s^2(\theta) ds\}}{\int \exp\{-\frac{1}{2\lambda^2}\int_0^t G_s^2(\theta) ds\}\,d\nu}\,d\nu(\theta)
		\]
		$G_t$ is a centered Gaussian process with correlation function $\tilde{K}^{t,\lambda}$ defined by \eqref{eq:KQtildeopb}. In other words,
		\(\forall \ 0 \leq s,\,u  \leq t \leq T\)
		\[
		\int_\Theta G_s(\theta) G_u(\theta) \, d\nu(\theta) = K(s,u) \text{   and   } \int_\Theta G_s(\theta) G_u(\theta) d\tilde{\nu}^t(\theta) = \tilde{K}^{t,\lambda}(s,u) .
		\]
	\end{lemma}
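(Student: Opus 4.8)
The plan is to compute the covariance of $G$ under $\tilde\nu^t$ directly, exploiting the Gaussian structure. First I would diagonalize: since $\bar K^t$ is a trace-class, symmetric, positive operator on $L^2([0,t])$, by the spectral theorem there is an orthonormal basis $(e_k)_{k\ge 1}$ of eigenfunctions with eigenvalues $\rho_k\ge 0$, $\sum_k \rho_k = \|\bar K^t\|_{t,1} < \infty$. Write the Karhunen--Lo\`eve expansion $G_s(\theta) = \sum_k \sqrt{\rho_k}\,\xi_k(\theta)\, e_k(s)$, where under $\nu$ the $\xi_k$ are i.i.d.\ standard Gaussians. Then $\int_0^t G_s^2\,ds = \sum_k \rho_k \xi_k^2$, so the density defining $\tilde\nu^t$ factorizes over $k$:
\[
\frac{d\tilde\nu^t}{d\nu}(\theta) = \prod_k \frac{\exp\{-\tfrac{1}{2\lambda^2}\rho_k\xi_k^2(\theta)\}}{\Ex_\nu[\exp\{-\tfrac{1}{2\lambda^2}\rho_k\xi_k^2\}]}.
\]
Hence under $\tilde\nu^t$ the $\xi_k$ remain independent and each is a centered Gaussian whose variance is rescaled: a standard Gaussian weighted by $\exp\{-a\xi^2/2\}$ becomes centered Gaussian with variance $(1+a)^{-1}$. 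With $a = \rho_k/\lambda^2$ we get $\var_{\tilde\nu^t}(\xi_k) = (1+\rho_k/\lambda^2)^{-1}$.

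Next I would reassemble the covariance. Using independence of the $\xi_k$ under $\tilde\nu^t$,
\[
\int_\Theta G_s(\theta)G_u(\theta)\,d\tilde\nu^t(\theta) = \sum_k \rho_k\, \var_{\tilde\nu^t}(\xi_k)\, e_k(s)e_k(u) = \sum_k \frac{\rho_k}{1+\rho_k/\lambda^2}\, e_k(s)e_k(u).
\]
The function $\sum_k \rho_k(1+\rho_k/\lambda^2)^{-1} e_k(s)e_k(u)$ is precisely the kernel of the operator $\bar K^t\circ(\mathrm{Id}+\tfrac{1}{\lambda^2}\bar K^t)^{-1}$, since both factors are diagonal in the basis $(e_k)$ with eigenvalues $\rho_k$ and $(1+\rho_k/\lambda^2)^{-1}$ respectively; this operator is $\bar{\tilde K}^{t,\lambda}$ by definition~\eqref{eq:KQtildeopb}, and the two orderings in~\eqref{eq:KQtildeopb} agree because diagonal operators commute. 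That $G$ is still Gaussian under $\tilde\nu^t$ is immediate: it is a linear combination of the jointly Gaussian (independent) $\xi_k$, and the finite-dimensional marginals $(G_{s_1},\dots,G_{s_n})$ are Gaussian as weak limits (or by the same factorization applied to finitely many time points). It is centered because each $\xi_k$ is centered under $\tilde\nu^t$.

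The main obstacle is analytic rather than algebraic: making the interchange of the infinite sum with the expectation $\int_\Theta(\cdot)\,d\tilde\nu^t$ rigorous, and controlling the tail of the Karhunen--Lo\`eve series so that $\int_0^t G_s^2\,ds = \sum_k \rho_k\xi_k^2$ holds $\nu$-a.s.\ and the normalizing constant $\int\exp\{-\tfrac{1}{2\lambda^2}\int_0^t G_s^2\}\,d\nu = \prod_k(1+\rho_k/\lambda^2)^{-1/2}$ is a convergent product. Both follow from $\sum_k\rho_k<\infty$ (trace-class), which gives $L^1$-convergence of $\sum_k\rho_k\xi_k^2$ and, via $\prod(1+\rho_k/\lambda^2) \le \exp\{\lambda^{-2}\sum_k\rho_k\}$, convergence of the product; dominated convergence then justifies all interchanges since the weights lie in $[0,1]$. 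Alternatively, one can sidestep the spectral decomposition entirely and argue as in \cite{ben-arous-guionnet:95}: verify directly that the Gaussian measure with covariance $\tilde K^{t,\lambda}$ has the claimed Radon--Nikodym derivative with respect to that with covariance $K$, using the Cameron--Martin formula for the relative density of two centered Gaussians together with the operator identity $(\mathrm{Id}+\tfrac{1}{\lambda^2}\bar K^t)^{-1} = \mathrm{Id} - \tfrac{1}{\lambda^2}\bar{\tilde K}^{t,\lambda}$; I would include the spectral argument as the main line since it is the most transparent.
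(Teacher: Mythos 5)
Your argument is correct, and it is worth noting that the paper itself gives no proof of this lemma at all: it simply cites Lemma A1 of \cite{ben-arous-guionnet:95}. Your Karhunen--Lo\`eve diagonalization is a clean, self-contained substitute. The key computations all check out: $\bigl(\mathrm{Id}+\tfrac{1}{\lambda^2}\bar K^t\bigr)^{-1}$ and $\bar K^t$ are simultaneously diagonal in the eigenbasis, so the tilted covariance $\sum_k \rho_k(1+\rho_k/\lambda^2)^{-1}e_k(s)e_k(u)$ is indeed the kernel of $\bar{\tilde K}^{t,\lambda}$ in either order of composition; the one-dimensional tilt $\exp\{-a\xi^2/2\}$ does turn a standard Gaussian into $\mathcal{N}(0,(1+a)^{-1})$; and the trace-class hypothesis $\sum_k\rho_k<\infty$ (guaranteed here by \eqref{eq:tcnorm}) is exactly what you need for $\int_0^t G_s^2\,ds=\sum_k\rho_k\xi_k^2$ to converge $\nu$-a.s.\ and for the normalizing product $\prod_k(1+\rho_k/\lambda^2)^{-1/2}$ to be finite and positive. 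You have also correctly identified the alternative route via the Cameron--Martin relative density of two centered Gaussian measures, which is closer to what Ben Arous and Guionnet actually do; the operator identity $(\mathrm{Id}+\tfrac{1}{\lambda^2}\bar K^t)^{-1}=\mathrm{Id}-\tfrac{1}{\lambda^2}\bar{\tilde K}^{t,\lambda}$ that you invoke there is easily verified. The only caveats are cosmetic: the Karhunen--Lo\`eve identity $G_s=\sum_k\sqrt{\rho_k}\,\xi_k e_k(s)$ holds a priori only for a.e.\ $s$ (one needs continuity of $K$ and Mercer's theorem to get every $s$, which is harmless here since kernels are identified in $L^2$), and Gaussianity under $\tilde\nu^t$ of the limits is most quickly justified by observing that the density $d\tilde\nu^t/d\nu$ is bounded above by the reciprocal of the normalizing constant, so $L^2(\nu)$ convergence of the partial sums transfers to $L^2(\tilde\nu^t)$.
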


We collect some useful results about the norms of the operators $\bar{K}_Q^t$ 
and $\bar{\tilde{K}}_Q^{t,\lambda}$ in the following lemma.
\begin{lemma}\label{eq:KKtildenorms}
	We have
		\begin{equation}\label{eq:normineq}
			\| \bar{K}_Q^t \|_{t,1},\, \Vert \bar{\tilde{K}}_Q^{t, \lambda} \Vert_{t,1}\leq \left\{
			\begin{array}{ll}
				\sigma^2 A^2 T & \text{{\rm S-model}}\\
				\sigma^2 C^2 T & \text{{\rm H-model}}
			\end{array} \ 0 \leq t \leq T
			\right.,
		\end{equation}
	where $A$ is the parameter in the S-model and $C$ is the parameter in the H-model defined by \eqref{eq:sigbound}.
\end{lemma}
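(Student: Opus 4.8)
The plan is to bound the trace-class norm $\|\bar{K}_Q^t\|_{t,1}$ first, and then deduce the bound for $\|\bar{\tilde{K}}_Q^{t,\lambda}\|_{t,1}$ from it using the structure of $\bar{\tilde{K}}_Q^{t,\lambda}$ and a positivity/monotonicity argument.

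For the first bound, recall that by definition $K_Q^{t,\lambda}(t,s) = \sigma^2\mathbb{E}^Q[f(X_t)f(X_s)]$, so the diagonal is $K_Q(s,s) = \sigma^2\mathbb{E}^Q[f(X_s)^2]$. In the S-model $f(y)=y$ and $X_s\in[-A,A]$ $Q$-a.s., hence $K_Q(s,s)\le \sigma^2 A^2$; in the H-model $|f|\le C$ by \eqref{eq:sigbound}, hence $K_Q(s,s)\le\sigma^2 C^2$. Since $K_Q(s,s)\ge 0$ (it is a variance-type quantity, and more generally $K_Q$ is a correlation function so its diagonal is nonnegative), the definition \eqref{eq:tcnormdef} gives
\[
\|\bar{K}_Q^t\|_{t,1} = \int_0^t K_Q(s,s)\,ds \le \int_0^t \sigma^2 A^2\,ds \le \sigma^2 A^2 T
\]
in the S-model, and similarly $\le\sigma^2 C^2 T$ in the H-model. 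This is the easy half.

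For the second bound, the key observation is the representation $\bar{\tilde{K}}_Q^{t,\lambda} = \bar{K}_Q^t\circ(\mathrm{Id}+\frac{1}{\lambda^2}\bar{K}_Q^t)^{-1}$ from \eqref{eq:KQtildeopb}, together with Remark~\ref{rem:contraction}: since $\bar{K}_Q^t$ is a positive operator, $(\mathrm{Id}+\frac{1}{\lambda^2}\bar{K}_Q^t)^{-1}$ is a positive contraction, i.e. $0\preceq(\mathrm{Id}+\frac{1}{\lambda^2}\bar{K}_Q^t)^{-1}\preceq\mathrm{Id}$ in the operator order. One then uses the fact that for a positive operator $A$ and a positive contraction $R$, the operator $A^{1/2}RA^{1/2}$ satisfies $0\preceq A^{1/2}RA^{1/2}\preceq A$, and that the trace-class norm of a positive operator is its trace, which is monotone in the operator order: $0\preceq S\preceq A\implies \mathrm{Tr}(S)\le\mathrm{Tr}(A)$. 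Since $\bar{\tilde{K}}_Q^{t,\lambda}$ is similar (indeed, because $\bar K^t$ and $(\mathrm{Id}+\frac1{\lambda^2}\bar K^t)^{-1}$ commute, equal) to the symmetric form $(\bar{K}_Q^t)^{1/2}(\mathrm{Id}+\frac{1}{\lambda^2}\bar{K}_Q^t)^{-1}(\bar{K}_Q^t)^{1/2}$, it is a positive trace-class operator with $0\preceq\bar{\tilde{K}}_Q^{t,\lambda}\preceq\bar{K}_Q^t$, whence
\[
\|\bar{\tilde{K}}_Q^{t,\lambda}\|_{t,1} = \mathrm{Tr}(\bar{\tilde{K}}_Q^{t,\lambda}) \le \mathrm{Tr}(\bar{K}_Q^t) = \|\bar{K}_Q^t\|_{t,1},
\]
and the bound from the first part carries over verbatim.

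Alternatively, and perhaps more in keeping with the probabilistic flavour of the paper, one can invoke Lemma~\ref{lem:KKtilde}: $\tilde{K}_Q^{t,\lambda}$ is the correlation function of the same Gaussian process $G^Q$ but under the tilted measure $d\tilde\gamma_Q^t\propto\exp\{-\frac{1}{2\lambda^2}\int_0^t (G^Q_s)^2\,ds\}\,d\gamma_Q$, so $\tilde K_Q^{t,\lambda}(s,s)=\int (G^Q_s)^2\,d\tilde\gamma_Q^t$; since the tilting density is bounded above by its own normalising-constant reciprocal times a factor $\le 1$... more carefully, one shows $\int (G^Q_s)^2 d\tilde\gamma_Q^t \le \int (G^Q_s)^2 d\gamma_Q = K_Q(s,s)$ because tilting by a density that is a decreasing function of $|G^Q_s|$ can only decrease the second moment of $G^Q_s$ — this is an FKG/association-type inequality for Gaussian measures, or can be seen directly from $\bar{\tilde K}^{t,\lambda}\preceq\bar K^t$. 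The main (and only) obstacle is making this monotonicity step fully rigorous; the operator-order argument via $\mathrm{Tr}(S)\le\mathrm{Tr}(A)$ for $0\preceq S\preceq A$ is the cleanest route and I would present that one, relegating the probabilistic interpretation to a remark.
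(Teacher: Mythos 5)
Your proof is correct, and for the half of the statement concerning $\bar{K}_Q^t$ it coincides exactly with the paper's argument (diagonal of the kernel bounded by $\sigma^2A^2$ or $\sigma^2C^2$, then integrate). For the half concerning $\bar{\tilde{K}}_Q^{t,\lambda}$ you diverge: the paper simply invokes Lemma A2 of Ben Arous--Guionnet as a black box, whereas you give a self-contained operator-theoretic proof. Your route --- write $\bar{\tilde{K}}_Q^{t,\lambda}=(\bar{K}_Q^t)^{1/2}(\mathrm{Id}+\tfrac{1}{\lambda^2}\bar{K}_Q^t)^{-1}(\bar{K}_Q^t)^{1/2}$ using commutativity, deduce $0\preceq\bar{\tilde{K}}_Q^{t,\lambda}\preceq\bar{K}_Q^t$, and conclude by monotonicity of the trace on positive operators --- is sound; in the eigenbasis of $\bar{K}_Q^t$ it amounts to $\sum_k\mu_k/(1+\mu_k/\lambda^2)\leq\sum_k\mu_k$, which is transparent. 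What it buys is independence from the external reference and a sharper intermediate statement, namely $\Vert\bar{\tilde{K}}_Q^{t,\lambda}\Vert_{t,1}\leq\Vert\bar{K}_Q^t\Vert_{t,1}$, which is exactly what is needed. The one small point worth making explicit is the identification of the paper's norm $\Vert\cdot\Vert_{t,1}=\int_0^t|K(s,s)|\,ds$ with the abstract trace $\mathrm{Tr}$: for a positive operator with continuous kernel this is Mercer's theorem, and continuity of $K_Q$ (hence of $\tilde{K}_Q^{t,\lambda}$) follows from continuity and boundedness of $f$ and continuity of the paths, so this is harmless but should be said. Your second, probabilistic sketch via the tilted measure is indeed the content of the cited Lemma A2, and you are right that as stated it is not rigorous (the FKG-type monotonicity step is exactly what needs proving); presenting the operator argument and relegating the probabilistic reading to a remark, as you propose, is the right call.
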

	\begin{proof}
		According to Remark~\ref{rem:lambda_changement_variable}, we assume \(\lambda = 1\).
		The first inequality in \eqref{eq:normineq} is obtained from the definition of the Trace-Class norm:
		\[
		\| \bar{K}_Q^t \|_{t,1} = \sigma^2 \int_0^t \mathbb{E}^Q[ f(X_s)^2 ]\,ds
		\]
		and the facts that $f = {\rm Id}$ and $X^2_t \leq A^2$ for the S-model, and $|f|$ is bounded by $C$ for the H-model.
		
		The second inequality in \eqref{eq:normineq} is a direct consequence of Lemma A2 in \cite{ben-arous-guionnet:95}.
\end{proof}

\subsection{A Volterra equation}\label{subsec:operators}

On the road to relating the processes $B_t$ and $W_t$ we have to spend some time on characterising the solutions to a Volterra equation of the second kind.

Our main result is the following Proposition.
\begin{proposition}\label{prop:BtWt}
	Consider  a function \(c\in L^2([0,T])\), a kernel \((\kappa(t,s))_{0\leq s , t \leq T}\) such that \(\kappa(t,s) = 0\) if
	\(t < s\) and 
	\begin{equation}\label{eq:hypkappa}
\forall t\in [0,T],\quad 	\int_0^t \kappa^2(t,s)\,ds \left(=	\int_0^T \kappa^2(t,s)\,ds\right)  \leq M,
\end{equation}
 and a stochastic process \(W\).

	The equation 
	\begin{equation}\label{eq:voltc}
		B_t =  W_t + \int_0^t c(s)\,ds + \int_0^t \left( \int_0^s \kappa(s,u)\, dB_u \right) \,ds\ 0 \leq t \leq T,
	\end{equation}
	 has a unique solution given by
	\begin{multline}\label{eq:Btfinalb}
		B_t =  W_t+ \int_0^t \widetilde{W}_s\,ds +  \int_0^t \left( \int_0^s H(s,u) \widetilde{W}_u\,du \right)ds \\+ \int_0^t c(s)\,ds + \int_0^t \left( \int_0^s H(s,u) c(u)\,du\right)ds
	\end{multline}
	where
	\begin{equation}\label{eq:tildeWb}
		\widetilde{W}_t = \int_0^t \kappa(t,s) \,dW_s.
	\end{equation}
	The function $H$ is defined from $\kappa$ by
	\begin{equation}\label{eq:H}
		\bar{H} = ({\rm Id} - \bar{\kappa})^{-1}-{\rm Id}.
	\end{equation}
	Moreover we have
	\begin{equation}\label{eq:boundnormHb}
		| H(s,u) | \leq M h(M T)\quad 0 \leq s,\,u \leq t,
	\end{equation}
	where  $h(x),\,x \geq 0$, is the sum of the absolutely convergent series
	\[
	h(x) = \sum_{n \geq 0} \frac{x^n}{\sqrt{n!}}
	\]
\end{proposition}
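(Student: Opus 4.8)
\textbf{Proof strategy for Proposition~\ref{prop:BtWt}.} The plan is to treat \eqref{eq:voltc} as a fixed-point problem for the process $B$ and to exploit the Volterra (lower-triangular) structure of $\kappa$ so that the associated integral operator is quasi-nilpotent, guaranteeing both existence/uniqueness and an explicit resolvent series. First I would rewrite \eqref{eq:voltc} by isolating the unknown: defining $\widetilde{W}_t = \int_0^t \kappa(t,s)\,dW_s$ and integrating by parts (or just substituting $B = W + \int_0^\cdot c + \int_0^\cdot\int_0^s \kappa\,dB$ into itself), I would show that $Z_t := \int_0^t \kappa(t,u)\,dB_u$ satisfies a deterministic-kernel Volterra equation of the second kind, $Z = \widetilde{W} + \widetilde{c} + \bar\kappa Z$ on $L^2([0,T])$, where $\widetilde{c}_t = \int_0^t\kappa(t,u)c(u)\,du$ and $\bar\kappa$ is the operator with kernel $\kappa$. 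Here the key point, coming from $\kappa(t,s)=0$ for $t<s$, is that $\bar\kappa$ is a Volterra operator; combined with the uniform bound \eqref{eq:hypkappa}, its iterated kernels decay like $\kappa^{(n)}$ with $\int_0^t (\kappa^{(n)}(t,s))^2\,ds \le M^n/ (n-1)!$ or a similar factorial bound obtained by induction using Cauchy--Schwarz and the lower-triangular support. This makes $\sum_n \bar\kappa^n$ converge in operator norm, so $({\rm Id}-\bar\kappa)^{-1}$ exists and equals ${\rm Id}+\bar H$ with $\bar H = \sum_{n\ge 1}\bar\kappa^n$; this is precisely \eqref{eq:H}.

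Having solved for $Z$, I would substitute back: $B_t = W_t + \int_0^t c(s)\,ds + \int_0^t Z_s\,ds = W_t + \int_0^t c(s)\,ds + \int_0^t (\widetilde{W}_s + \widetilde{c}_s)\,ds + \int_0^t (\bar H(\widetilde{W}+\widetilde{c}))_s\,ds$, and expanding $\widetilde{c}$ and the action of $\bar H$ as kernel integrals yields exactly \eqref{eq:Btfinalb}. Uniqueness follows from the uniqueness of the $L^2$ solution $Z$ of the Volterra equation (the operator ${\rm Id}-\bar\kappa$ being invertible), since any solution $B$ of \eqref{eq:voltc} forces $Z_t = \int_0^t\kappa(t,u)\,dB_u$ to solve that equation; then $B$ is recovered from $Z$ by the displayed formula. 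I should be a little careful that $\widetilde{W}$ is well defined (the stochastic integral $\int_0^t \kappa(t,s)\,dW_s$ exists because $\kappa(t,\cdot)\in L^2([0,t])$ by \eqref{eq:hypkappa}, for each fixed $t$) and that the manipulations exchanging $\int_0^t(\cdot)\,ds$ with the stochastic integral are legitimate — a stochastic Fubini argument, justified again by the $L^2$ bound on $\kappa$.

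It remains to prove the pointwise bound \eqref{eq:boundnormHb} on the kernel $H$. The idea is to bound the iterated kernels $\kappa^{(n)}$ of $\bar\kappa^n$ in sup-norm by induction. Writing $\kappa^{(n)}(s,u) = \int_u^s \kappa(s,v)\,\kappa^{(n-1)}(v,u)\,dv$ and applying Cauchy--Schwarz together with $\int_0^s \kappa^2(s,v)\,dv \le M$ and the $L^2$-bound $\int_u^s (\kappa^{(n-1)}(v,u))^2\,dv$ propagated from the operator-norm estimate $\|\bar\kappa^{n-1}\|_{{\rm HS}}^2 \le M^{n-1}/(n-2)!$ — combined with the shrinking interval of integration due to the Volterra support, which supplies the extra $1/(n-1)$-type factors — I expect to get $|\kappa^{(n)}(s,u)| \le M^{n/2}\cdot M^{(n-1)/2}/\sqrt{(n-1)!} = M^n/\sqrt{(n-1)!}$ uniformly in $0\le u\le s\le t\le T$ (possibly with $T^{\,\cdot}$ factors absorbed or a slightly different constant; the precise combinatorial bookkeeping is what needs care). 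Summing, $|H(s,u)| \le \sum_{n\ge 1} M^n/\sqrt{(n-1)!} = M\sum_{m\ge 0} M^m/\sqrt{m!} \le M\,h(MT)$ once the $T$-dependence is tracked, with $h(x)=\sum_{n\ge0} x^n/\sqrt{n!}$ absolutely convergent by the ratio test. The main obstacle is exactly this last step: getting the factorial decay of the iterated Volterra kernels in the \emph{sup} norm (rather than merely in Hilbert--Schmidt norm) with the right constants, since a naive Cauchy--Schwarz loses a factor and one must use the triangular support of $\kappa$ carefully to recover the $\sqrt{n!}$ growth; everything else is bookkeeping with Fubini and the geometric/Neumann series.
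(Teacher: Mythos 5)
Your proposal is correct and follows essentially the same route as the paper: reduce \eqref{eq:voltc} to a scalar Volterra equation of the second kind for the auxiliary process $\tilde B_t=\int_0^t\kappa(t,u)\,dB_u$, invert ${\rm Id}-\bar\kappa$ by the Neumann/resolvent series of iterated kernels (giving \eqref{eq:H}), use the resolvent identity $\bar H=\bar\kappa+\bar H\circ\bar\kappa$ to collapse the $\tilde c$-terms into the $\bar H\cdot c$ term of \eqref{eq:Btfinalb}, and obtain uniqueness from the invertibility of ${\rm Id}-\bar\kappa$. The only difference is that you sketch the factorial sup-norm estimates on the iterated Volterra kernels explicitly (correctly flagging that as the delicate bookkeeping), whereas the paper simply defers that step to Tricomi.
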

\begin{proof} 
	
	\textbf{Necessary condition:} \\
		Consider  a solution \((B_t)\) to \eqref{eq:voltc}, one denotes $\tilde{B}_t =  \int_0^t \kappa(t,s)\,dB_s$.
		Thus, \eqref{eq:voltc} can be written
	\begin{equation}\label{eq:Btb}
		B_t =  W_t + \int_0^t c(s)\,ds + \int_0^t \tilde{B}_s \,ds.
	\end{equation}
	Differentiating both sides of \eqref{eq:Btb}, multiplying by $\kappa(t,s)$, and integrating between 0 and $t$ yields
	\begin{align}
		\nonumber \tilde{B}_t &=  \int_0^t \kappa(t,s) \,dW_s + \int_0^t \kappa(t,s) c(s)\,ds + \int_0^t \kappa(t,s)\tilde{B}_s\,ds \\
		&=:  \widetilde{W}_t +  \tilde{c}(t) + \int_0^t \kappa(t,s)\tilde{B}_s\,ds,
		\label{eq:Ctb}
	\end{align}
	Now \eqref{eq:Ctb} is a classical linear Volterra equation of the second kind in the unknown $\tilde{B}_t$, except for the fact that $\widetilde{W}_t $ is a random 
	process instead of a deterministic function in $L^2([0,T])$. A close look at the proof in this case (which is found, e.g. in \cite[pp.10-15]{tricomi:57}) shows that it 
	carries over to our case and that the unique solution to \eqref{eq:Ctb}  can be expressed as
	\begin{equation}\label{eq:Ctsolb}
		\tilde{B}_t =  \widetilde{W}_t + \tilde{c}(t) +  \int_0^t H(t,s) \, ( \widetilde{W}_s\ + \tilde{c}(s))ds,
	\end{equation}
	where the resolvent kernel $H$ is the sum of the absolutely convergent series
	\begin{equation}\label{eq:HQt}
		H(t,s) = \sum_{n \geq 0} \left( \kappa(t,s)\right)^{(n+1)}.
	\end{equation}
	The function $\kappa(t,s)^{(n)}$ is the kernel of the operator $\bar{\kappa}$ iterated $n$ times. 
	We notice  that
	\[
	\bar{H} = ({\rm Id}- \bar{\kappa})^{-1}-{\rm Id}
	\]
	\begin{remark}
	Note that under the assumption on \(\kappa\) in Proposition~\ref{prop:BtWt}, one can simplify the composition 
		defined by \eqref{eq:iterate1} into
		\[
		\kappa^{(2)}(s,u) = \int_u^s \kappa(s,v) \kappa(v,u)\,dv\quad \forall \, u \leq s \in [0,t].
		\]
		In other words, the composition does not depend on \(t\) for such kernels.
	\end{remark}

	Combining \eqref{eq:Btb} and \eqref{eq:Ctsolb} we have obtained the necessary condition
	\begin{multline*}
		B_t =  W_t+  \int_0^t \widetilde{W}_s\,ds + \int_0^t c(s)\,ds\\+  \int_0^t \tilde{c}(s)\,ds 
		+  \int_0^t \left( \int_0^s H(s,u) ( \widetilde{W}_u+\tilde{c}(u))\,du\right)\,ds.
	\end{multline*}
	Now it is known from the theory of Volterra equations that the resolvent kernel $H_Q$ and the kernel $\kappa$ satisfy the relation
	\begin{align*}
		H(s,v) & = \kappa(s,v) + \int_v^s \kappa(s,u) H(u,v)\,du \\ 
		& =  \kappa(s,v) + \int_v^s  H(s,u)\kappa(u,v)\,du,
	\end{align*}
	or in operator form
	\begin{equation}\label{eq:int3b}
		\bar{H} =   \bar{\kappa} +   \bar{\kappa} \circ \bar{H}=   \bar{\kappa} + \bar{H} \circ   \bar{\kappa}.
	\end{equation}
	This implies that, according to \eqref{eq:Ctb},
	\[
	\int_0^t \tilde{c}(s)\,ds +  \int_0^t \left( \int_0^s H(s,u) \tilde{c}(u)\,du \right)\,ds = \int_0^t \left( \int_0^s H(s,u) c(u)\,du\right)\,ds
	\]
	and therefore
	\begin{multline*}
		B_t =   W_t+   \int_0^t \widetilde{W}_s\,ds +    \int_0^t \left( \int_0^s H(s,u) \widetilde{W}_u\,du \right)\,ds \\+ \int_0^t c(s)\,ds + \int_0^t \left( \int_0^s 
		H(s,u) c(u)\,du\right)\,ds.
	\end{multline*}
	The last identity and  \eqref{eq:tildeWb} prove that the process $B_t$ is adapted to \(\mathbb{F}^W\).

	\noindent \textbf{Sufficient condition:} \\
	We show that, conversely, if $B_t$ is given by \eqref{eq:Btfinalb} it satisfies \eqref{eq:voltc}. Indeed, \eqref{eq:Btfinalb} commands that
	\[
	dB_u =   dW_u +    (\widetilde{W}_u + c(u)) du+\left( \int_0^u H(u,v) (  \widetilde{W}_v  + c(v))\,dv \right)\,du.
	\]
	Multiplying both sides of this equation by $\kappa(s,u)$ and integrating w.r.t. $u$ between 0 and $s$, we obtain, using \eqref{eq:Ctb} and \eqref{eq:tildeWb}
	\begin{multline}\label{eq:int1}
		\int_0^s \kappa(s,u)\,dB_u = 
		\widetilde{W}_s  +  \int_0^s \kappa(s,u)\,( \widetilde{W}_u  + c(u))\,du \\
		+  \int_0^s \kappa(s,u) \left( \int_0^u H(u,v) (\widetilde{W}_v + c(v))\,dv \right)\,du,
	\end{multline}
	or, in operator form,
	\begin{align*}
		\int_0^s \kappa(s,u)\,dB_u & = \widetilde{W}_s  + ( \bar{\kappa} + \bar{\kappa} \circ \bar{H}) \cdot (\widetilde{W} + c)(s) \\
		& =  \widetilde{W}_s  + \bar{H} \cdot (\widetilde{W}+ c)(s),
	\end{align*}
	by \eqref{eq:int3b}.
	We have obtained
	\[
	\int_0^s \kappa(s,u)\,dB_u = \widetilde{W}_s +  \int_0^s H(s,u) \widetilde{W}_u \,du +  \int_0^s H(s,u) c(u) \,du.
	\]
	Integrating both sides of this equation between 0 and $t$ and adding $ W_t + \int_0^t c(s)\,ds$  to both sides  yields
	\begin{multline*}
		W_t + \int_0^t c(s)\,ds + \int_0^t \left( \int_0^s \kappa(s,u)\,dB_u \right)\,ds = 
		W_t +   \int_0^t \widetilde{W}_s\,ds\\
		+  \int_0^t \left( \int_0^s H(s,u) \widetilde{W}_u \,du \right) \,ds + \int_0^t c(s)\,ds +  \int_0^t \left( \int_0^s H(s,u)  c(u)\,du \right) \,ds,
	\end{multline*}
	and since the right hand side of this equation is equal to $B_t$ by  \eqref{eq:Btfinalb}  this indicates that \eqref{eq:voltc} is satisfied, as claimed.
	
	To finish the proof we must show \eqref{eq:boundnormHb}. It is an easy consequence of the proof found in \cite[pp. 10-15]{tricomi:57}, of \eqref{eq:tchs} and assumption \eqref{eq:hypkappa}.
\end{proof}

\subsection{Proof of Theorem~\ref{theo:dynmean}}

We now prove Theorem~\ref{theo:dynmean}. Using Remark~\ref{rem:lambda_changement_variable} we write the proof in the case $\lambda=1$.
\begin{proof}	
	We start from \eqref{eq:RN} which we reproduce here in the case $\lambda=1$. Note that we have dropped for simplicity the upper index $Q$.
	\begin{equation}\label{eq:girsanovlimit1}
		\frac{dQ}{dP} = \int \exp \left\{ \int_0^TG_t\,dB_t - \int_0^T G_t^2 \, dt   \right\}\,d\gamma_Q 
	\end{equation}
	The plan is to rewrite $dQ/dP$ in such a way that we can apply Girsanov's Theorem, Theorem~\ref{theo:girsanov},  and read off the dynamics from the new expression.
	
	We write
	\[
	G_t = m_Q(t) +  R_t,
	\]
	where $R_t$ is, under $\gamma_Q$, a zero mean Gaussian process with correlation function $  K_Q(t,s)$ and (see Lemma~\ref{lem:KKtilde}), under $\tilde{\gamma}_Q^t$, a  zero mean Gaussian process with correlation function $\tilde{K}_Q^t(t,s)$, where $\tilde{\gamma}_Q^t$ is defined by
	\begin{equation}\label{eq:LambdaT}
		d \tilde{\gamma}_Q^t = \frac{\exp \left\{ -  \int_0^t R_s^2\,ds \right\}}{\int \exp \left\{ -  \int_0^t R_s^2\,ds \right\} \,d\gamma_Q}\ d \gamma_Q := \Lambda_t \ d \gamma_Q
	\end{equation}
	We write
	\begin{multline}\label{eq:dQdP1}
		\frac{dQ}{dP} = \exp \left\{ \int_0^Tm_Q(t) \,dB_t -  \int_0^T  m_Q^2(t)\,dt\right\} \times \\
		\int \exp \left\{ \int_0^T R_t\,dB_t - \int_0^T R_t m_Q(t) \,dt   - \frac{1}{2} \int_0^T R_t^2 \,dt   \right\}\,d\gamma_Q.
	\end{multline}
	The first factor in the right hand side does not pose any problem. The second does because of the presence of $R_t$. The following parts of the proof are meant to get rid of $R_t$.
	
	We rewrite the second factor of the right hand side as follows, using \eqref{eq:LambdaT},
	\begin{multline*}
		\int \exp \left\{ \int_0^T R_t\,dB_t -  \int_0^T R_t m_Q(t) \,dt   - \frac{1}{2} \int_0^T R_t^2 \,dt   \right\}\,d\gamma_Q = \\
		\int \exp \left\{ - \frac{1}{2} \int_0^T R_t^2\,dt \right\} \,d\gamma_Q \times 
		\int \exp \left\{  \int_0^T R_t\,dB_t - \int_0^T R_t m_Q(t) \,dt \right\}\,d\tilde{\gamma}_Q^T
	\end{multline*}
	\textbf{Dealing with the term $\int \exp \left\{ - \dfrac{1}{2} \int_0^T R_t^2\,dt \right\} \,d\gamma_Q$:}\,\\
	We define
	\[
	h(t) = \log \int \exp \left\{ - \frac{1}{2} \int_0^t R_s^2\,ds \right\} \,d\gamma_Q.
	\]
	We have
	\[
	h'(t) = -\frac{1}{2} \int R_t^2 \frac{\exp \left\{ - \frac{1}{2} \int_0^t R_s^2\,ds \right\} \,d\gamma_Q}{\int \exp \left\{ - \frac{1}{2} \int_0^t R_s^2\,ds \right\} \,d\gamma_Q},
	\]
	and by \eqref{eq:LambdaT} and the definition of $\tilde{K}_Q^t$
	\[
	h'(t) = -\frac{1}{2} \int R_t^2 \,d\tilde{\gamma}_Q = -\frac{1}{2} \tilde{K}_Q^t(t,t)
	\]
	Since $h(0)=0$ we conclude that
	\[
	h(t) = -\frac{1}{2} \int_0^t \tilde{K}_Q^s(s,s)\,ds
	\]
	and hence that
	\begin{equation}\label{eq:dQdP2}
		\int \exp \left\{ - \frac{1}{2} \int_0^T R_t^2\,dt \right\} \,d\gamma_Q = \exp \left\{  - \frac{1}{2} \int_0^T \tilde{K}_Q^t(t,t)\,dt \right\} 
	\end{equation}
	\textbf{Dealing with the term $\int \exp \left\{  \int_0^T R_t\,dB_t -  \int_0^T R_t m_Q(t) \,dt \right\}\,d\tilde{\gamma}_Q^T$:}\,\\
	Conditionally to $B_t$, $\int_0^T R_t\,dB_t $ is, under $\tilde{\gamma}_Q^T$, a centered Gaussian process with variance $\int_0^T \int_0^T \tilde{K}_Q^T(t,s)\,dB_s\,dB_t$. Similarly $\int_0^T R_t m_Q(t) \,dt$ is, under $\tilde{\gamma}_Q^T$, a centered Gaussian process with variance
	$\int_0^T \int_0^T \tilde{K}_Q^T(t,s) m_Q(t) m_Q(s) \,ds\,dt$. Their sum is also, conditionally to $B_t$, a centered Gaussian process and therefore, by Gaussian calculus
	\begin{multline}\label{eq:dQdP3}
		\int \exp \left\{\int_0^T R_t\,dB_t -  \int_0^T R_t m_Q(t) \,dt \right\}\,d\tilde{\gamma}_Q^T = \\
		\exp \Big\{ \frac{1}{2} \Big(  \int_0^T \int_0^T \tilde{K}_Q^T(t,s)\,dB_s\,dB_t - 2  \mathbb{E}_{\tilde{\gamma}_Q^T}\left[ \int_0^T R_t\,dB_t \times  \int_0^T R_t m_Q(t) \,dt \right]\\
		+\int_0^T \int_0^T \tilde{K}_Q^T(t,s) m_Q(t) m_Q(s) \,ds\,dt
		\Big)   \Big\} = \\
		\exp \Big\{ \frac{1}{2}   \int_0^T \int_0^T \tilde{K}_Q^T(t,s)\,dB_s\,dB_t -   \int_0^T \int_0^T \tilde{K}_Q^T(t,s) m_Q(s) \,ds\,dB_t\\ 
		+\frac{1}{2} \int_0^T \int_0^T \tilde{K}_Q^T(t,s) m_Q(t) m_Q(s) \,ds\,dt
		\Big\}.
	\end{multline}
	Combining \eqref{eq:dQdP1}-\eqref{eq:dQdP3} we end up with the following formula for $dQ/dP$:
	\begin{multline}\label{eq:bout0}
		\frac{dQ}{dP} = \exp \left\{ \int_0^T m_Q(t) \,dB_t - \frac{1}{2} \int_0^T  m_Q(t)^2\,dt\right\} \times \exp \left\{  - \frac{1}{2} \int_0^T \tilde{K}_Q^t(t,t)\,dt \right\}  \times\\
		\exp \Big\{ \frac{1}{2}   \int_0^T \int_0^T \tilde{K}_Q^T(t,s)\,dB_s\,dB_t -    \int_0^T \int_0^T \tilde{K}_Q^T(t,s) m_Q(s) \,ds\,dB_t\\ 
		+\frac{1}{2} \int_0^T \int_0^T \tilde{K}_Q^T(t,s) m_Q(t) m_Q(s) \,ds\,dt
		\Big\}
	\end{multline}
	This formula does not allow us yet to use Girsanov Theorem, Theorem~\ref{theo:girsanov}. Therefore we need to massage it in order to make this possible.
	
	We apply It\^o formula three times, one for each of the last three terms of the product in the right hand side of \eqref{eq:bout0}. We first note that, according to \eqref{eq:LambdaT},
	\begin{equation}\label{eq:derLambda}
		\frac{d\Lambda_t}{dt} = \frac{1}{2} \Lambda_t \left( \tilde{K}_Q^t(t,t) - R_t^2 \right).
	\end{equation}
	Indeed,
	\begin{align*}
		\frac{d\Lambda_t}{dt} &= -\frac{1}{2} \left[ R_t^2 \Lambda_t -  \exp \left\{ - \frac{1}{2} \int_0^t R_s^2\,ds \right\} \frac{\int R_t^2 \exp \left\{ - \frac{1}{2} \int_0^t R_s^2\,ds \right\} \,d\gamma_Q}{\left(\int \exp \left\{ - \frac{1}{2} \int_0^t R_s^2\,ds \right\} \,d\gamma_Q\right)^2}  \right] \\
		&= \frac{1}{2} \Lambda_t \left[ \int R_t^2 \frac{\exp \left\{ - \frac{1}{2} \int_0^t R_s^2\,ds \right\} }{\int \exp \left\{ - \frac{1}{2} \int_0^t R_s^2\,ds \right\} \,d\gamma_Q} \,d\gamma_Q- R_t^2   \right]\\
		&= \frac{1}{2} \Lambda_t \left(\int R_t^2\,d\tilde{\gamma}_Q  - R_t^2  \right)\\
		&= \frac{1}{2} \Lambda_t \left( \tilde{K}_Q^t(t,t) - R_t^2  \right).
	\end{align*}
	Second we note that, by definition,
	\begin{equation}\label{eq:my1}
		\tilde{K}_Q^T(t,s) = \int R_t R_s \Lambda_T \, d\gamma_Q.
	\end{equation}
	\textbf{Dealing with the term $\exp(\frac{1}{2}\int_0^T \int_0^T \tilde{K}_Q^T(t,s)\,dB_s\,dB_t)$:}\,\\
	We prove that 
	\begin{multline} \label{eq:boutun}
		\exp \left\{ \frac{1}{2}   \int_0^T \int_0^T \tilde{K}_Q^T(t,s)\,dB_s\,dB_t \right\} = \exp \left\{ \frac{1}{2} \int_0^T \tilde{K}_Q^t(t,t)\,dt \right\} \times \\
		\exp \left\{ \int_0^T \int_0^t \tilde{K}_Q^t(t,s)\,dB_s\,dB_t\right\} \times \exp \left\{ -\frac{1}{2} \int_0^T \left( \int_0^t \tilde{K}_Q^t(t,s)\,dB_s   \right)^2\,dt \right\}.
	\end{multline}
	Indeed, it follows from \eqref{eq:my1} that
	\begin{align}\nonumber
		\int_0^T \int_0^T \tilde{K}_Q^T(t,s)\,dB_s\,dB_t & = \int \left(\int_0^T R_t  dB_t\right)^2 \Lambda_T \, d\gamma_Q\\
		& = \int \int_0^T d\left(  \left(\int_0^t R_s  dB_s\right)^2 \Lambda_t \right)\,d\gamma_Q\label{eq:my2}
	\end{align}
	Because $\Lambda_t$ is deterministic, Itô product rule indicates that
	\[
	d\left(  \left(\int_0^t R_s  dB_s\right)^2 \Lambda_t \right) = \left(\int_0^t R_s  dB_s\right)^2\, d\Lambda_t + \Lambda_t\,d\left(\int_0^t R_s  dB_s\right)^2
	\]
	There remains to write It\^o's formula for $\left(\int_0^t R_s  dB_s\right)^2$
	\begin{equation}\label{eq:my3}
		\left(\int_0^t R_s dB_s\right)^2 = 2 \int_0^t R_s \left(\int_0^s R_u\,dB_u\right)\,dB_s + \int_0^t R_s^2 \, ds,
	\end{equation}
	to remember Isserli's theorem
	\begin{equation}\label{eq:my4}
		\int R_t^2 R_s R_u \, d\tilde{\gamma}_Q^t = \tilde{K}_Q^t(t,t) \tilde{K}_Q^t (s,u) + 2 \tilde{K}_Q^t(t,s) \tilde{K}_Q^t (t,u) \quad s,\,u\,\leq t
	\end{equation}
	and to combine \eqref{eq:my1}, \eqref{eq:derLambda}, \eqref{eq:my2}, \eqref{eq:my3} and \eqref{eq:my4} to obtain
	\begin{multline*}
		\int_0^T \int_0^T \tilde{K}_Q^T(t,s)\,dB_s\,dB_t = 2  \int_0^T \int_0^t \tilde{K}_Q^t(t,s)\,dB_s\,dB_t + \int_0^T \tilde{K}_Q^t(t,t)\,dt \\
		-\int_0^T \left( \int_0^t \tilde{K}_Q^t(t,s)\,dB_s   \right)^2\,dt.
	\end{multline*}
	\textbf{Dealing with the term $\exp \left\{ \frac{1}{2}   \int_0^T \int_0^T \tilde{K}_Q^T(t,s) m_Q(s) m_Q(t)\,ds\,dt \right\}$:}\,\\
	Similar arguments show that
	\begin{multline} \label{eq:boutdeux}
		\exp \left\{ \frac{1}{2}   \int_0^T \int_0^T \tilde{K}_Q^T(t,s) m_Q(s) m_Q(t)\,ds\,dt \right\} = \\
		\exp \left\{  \int_0^T m_Q(t) \left(\int_0^t \tilde{K}_Q^t(t,s) m_Q(s)\,ds \right)\,dt\right\} \\
		\times \exp \left\{ -\frac{1}{2} \int_0^T \left( \int_0^t \tilde{K}_Q^t(t,s) m_Q(s)\,ds   \right)^2\,dt \right\}.
	\end{multline}
	Indeed, by \eqref{eq:my1}, we have
	\begin{align*}
		\int_0^T \int_0^T \tilde{K}_Q^T(t,s) m_Q(s) m_Q(t)\,ds\,dt &= \int \left( \int_0^T \int_0^T R_t R_s m_Q(t) m_Q(s)\,ds\,dt \right) \Lambda_T d\gamma_Q \\
		&= \int \left( \int_0^T R_t m_Q(t) \,dt \right)^2 \Lambda_T d\gamma_Q
	\end{align*}
	We apply It\^o's formula to $\left( \int_0^T R_t m_Q(t) \,dt \right)^2 \Lambda_T$. By \eqref{eq:derLambda} we have:
	\begin{multline*}
		\left( \int_0^T R_t m_Q(t) \,dt \right)^2 \Lambda_T = 2 \int_0^T R_t m_Q(t) \left( \int_0^t R_s m_Q(s)\,ds \right) \Lambda_t\,dt  \\
		+\frac{1}{2} \int_0^T \left( \int_0^t R_s m_Q(s) \,ds \right)^2 \Lambda_t \left( \tilde{K}_Q^t(t,t)-R_t^2   \right)\,dt.
	\end{multline*}
	Integrating both sides of the previous equation w.r.t. to $\gamma_Q$ and making use of \eqref{eq:my1} yields
	\begin{multline*}
		\int \left( \int_0^T R_t m_Q(t) \,dt \right)^2 \Lambda_T \,d\gamma_Q = 2\int_0^T m_Q(t) \left( \int_0^t \tilde{K}_Q^t(t,s) m_Q(s)\,ds \right)\,dt\\
		+  \frac{1}{2} \int_0^T \tilde{K}_Q^t(t,t) \left( \int_0^t \int_0^t \tilde{K}_Q^t(s,u) m_Q(s) m_Q(u)\,ds\,du   \right)\,dt\\
		- \frac{1}{2} \int_0^T \left(  R_t^2  \left( \int_0^t R_s m_Q(s) \,ds \right)^2 \Lambda_t   \right)\,d\gamma_Q dt.
	\end{multline*}
	Using  \eqref{eq:my4} again we obtain
	\begin{multline*}   
		\int \left( \int_0^T R_t m_Q(t) \,dt \right)^2 \Lambda_T \,d\gamma_Q = 2 \int_0^T m_Q(t) \left( \int_0^t \tilde{K}_Q^t(t,s) m_Q(s)\,ds \right)\,dt \\
		-\frac{1}{2}\int_0^T \left( \int_0^t \tilde{K}_Q^t(t,s) m_Q(s)\,ds   \right)^2\,dt ,
	\end{multline*}
	i.e. \eqref{eq:boutdeux}.
	
	\noindent
	\textbf{Dealing with the term $\exp \left\{ -  \int_0^T \int_0^T \tilde{K}_Q^T(t,s) m_Q(s) \,ds\,dB_t  \right\}$:}\,\\
	Similar arguments show that
	\begin{multline} \label{eq:bouttrois}
		\exp \left\{ -   \int_0^T \int_0^T \tilde{K}_Q^T(t,s) m_Q(s) \,ds\,dB_t  \right\} \\
		= \exp \left\{ -   \int_0^T \left( \int_0^t \tilde{K}_Q^t(t,s) m_Q(s)\,ds   \right)  \,dB_t\right\} \times \\
		\exp \left\{ -   \int_0^T m_Q(t)\left( \int_0^t  \tilde{K}_Q^t(t,s)\,dB_s   \right)  \,dt \right\} \times \\
		\exp \left\{   \int_0^T \left(  \int_0^t  \tilde{K}_Q^t(t,s)\,dB_s \right) \times \left(  \int_0^t \tilde{K}_Q^t(t,s) m_Q(s)\,ds \right)\,dt \right\} 
	\end{multline}
	Indeed, by \eqref{eq:my1}
	\begin{align*}
		\int_0^T \int_0^T \tilde{K}_Q^T(t,s) m_Q(s) \,ds\,dB_t &= \int \left( \int_0^T \int_0^T R_t R_s m_Q(s) \,ds\,dB_t    \right) \Lambda_T\,d\gamma_Q\\
		&= \int \left( \int_0^T R_t\,dB_t  \right) \left( \int_0^T R_s m_Q(s)\,ds  \right) \Lambda_T\,d\gamma_Q
	\end{align*}
	We apply It\^o's formula to the product $\left( \int_0^T R_t\,dB_t  \right) \left( \int_0^T R_s m_Q(s)\,ds  \right) \Lambda_T$ using \eqref{eq:derLambda}:
	\begin{multline*}
		\left( \int_0^T R_t\,dB_t  \right) \left( \int_0^T R_s m_Q(s)\,ds  \right) \Lambda_T = \int_0^T \left(\int_0^t R_s m_Q(s)\,ds   \right)R_t \Lambda_t \,dB_t\\
		+ \int_0^T \left( \int_0^t R_s \,dB_s   \right) R_t m_Q(t) \Lambda_t \,dt\\
		+ \frac{1}{2} \int_0^T \left( \int_0^t R_s \,dB_s   \right) \left(\int_0^t R_u m_Q(u)\,du   \right) \left( \tilde{K}_Q^t(t,t) - R_t^2  \right)\Lambda_t\,dt
	\end{multline*}
	We integrate both sides of the previous equation w.r.t. $\gamma_Q$ and use \eqref{eq:my1} and  \eqref{eq:my4} to obtain
	\begin{multline*}
		\int \left( \int_0^T R_t\,dB_t  \right) \left( \int_0^T R_s m_Q(s)ds  \right) \Lambda_Td\gamma_Q = \int_0^T \left(\int_0^t \tilde{K}_Q^t(t,s) m_Q(s)\,ds \right)\,dB_t\\
		+ \int_0^T m_Q(t) \left( \int_0^t \tilde{K}_Q(t,s)\,dB_s   \right)\,dt\\
		- \int_0^T \left( \int_0^t \tilde{K}_Q(t,s)\,dB_s   \right) \left(  \int_0^t \tilde{K}_Q^t(t,s) m_Q(s)\,ds  \right)dt
	\end{multline*}
	
	Collecting \eqref{eq:bout0}, \eqref{eq:boutun}, \eqref{eq:boutdeux} and \eqref{eq:bouttrois}, and defining
	\[
	\tilde{B}_t = \int_0^t  \tilde{K}_Q^{t}(t,s)\,dB_s \quad \quad \tilde{m}_Q(t) = \int_0^t  \tilde{K}_Q^{t}(t,s) m_Q(s)\,ds,
	\]
	we obtain
	\begin{equation}\label{eq:fin}
		\frac{dQ}{dP} = \exp \left\{ \int_0^T (m_Q(t) -  \tilde{m}_Q(t) + \tilde{B}_t)\,dB_t 
		- \frac{1}{2} \int_0^T (m_Q(t) - \tilde{m}_Q(t) + \tilde{B}_t )^2\,dt   \right\}
	\end{equation}
	We are almost done.
	We next define
	\begin{equation}\label{eq:cQ}
		c_Q(t) := m_Q(t) -  \tilde{m}_Q(t) = \left( {\rm Id} - \bar{\tilde{K}}_Q^{t} \right) \cdot m_Q(t)
	\end{equation}
	By Girsanov's Theorem, Theorem~\ref{theo:girsanov}, we obtain that
	\[
	W_t = B_t -\int_0^t (c_Q(s) + \tilde{B}_s) ds
	\]
	is a Brownian motion under $Q$.
	Therefore we have
	\[
	 B_t =   W_t + \int_0^t c_Q(s)\,ds +\int_0^t \tilde{B}_s \,ds.
	\]
	Using the definition of \(\tilde{B}\), we obtain the Volterra equation
		\[
		 B_t =   W_t + \int_0^t c_Q(s)\,ds + \int_0^t \int_0^s\tilde{K}^{s}(s,u)d B_u \,ds.
		\]
	and we apply Proposition~\ref{prop:BtWt} with \(\kappa(t,s) = \tilde{K}^{s}(s,u) \mathbbm{1}_{u\leq s}\) to solve it.
	\begin{multline}\label{eq:BtH}
		B_t = W_t + \int_0^t \widetilde{W}_s \, ds+
		 \int_0^t \left( \int_0^s H_Q
		(s,u) \widetilde{W}_u\,du  \right)\,ds\\
		+  \int_0^t c_Q(s)\,ds + \int_0^t \left( \int_0^s H_Q
		(s,u) c_Q(u)\,du  \right)\,ds .
	\end{multline}
	We now use \eqref{eq:cQ} and \eqref{eq:int3b} and obtain
	\begin{multline*}
		 B_t = W_t +\int_0^t \widetilde{W}_s \, ds
		+ \int_0^t \left( \int_0^s H_Q
		(s,u) \widetilde{W}_u\,du  \right)\,ds\\
		+  \int_0^t m_Q(s)\,ds + \int_0^t \left( \int_0^s H_Q
		(s,u) m_Q(u)\,du  \right)\,ds   \\
		-  \int_0^t \int_0^s\tilde{K}^{s}
		(s,u)m_Q(u)\,du \, ds \\ \nonumber
		-  \int_0^t \left( \int_0^s H_Q
		(s,u) \int_0^u \tilde{K}^{u}
		(u,v)m_Q(v)\,dv \, du  \right)\,ds.
		\end{multline*}
	and hence
	\begin{equation}\label{eq:browniansrelation}
	B_t = W_t + \int_0^t \widetilde{W}_s \, ds
		+  \int_0^t \left( \int_0^s H_Q
		(s,u) \widetilde{W}_u\,du  \right)\,ds
		+  \int_0^t m_Q(s)\,ds .
	\end{equation}
We apply Proposition~\ref{prop:BtWt} with \(c\equiv 0\) and observe that the process 
		$ W_t + \int_0^t \widetilde{W}_s \, ds + 
		\int_0^t \left( \int_0^s H_Q(s,u) \widetilde{W}_u\,du  \right)\,ds$ is the solution $C_t$ to the Volterra equation
		\[
		C_t =   W_t + \int_0^t \int_0^s \tilde{K}_Q^{s}(s,u) \, dC_u \,ds.
		\]
		By Girsanov's Theorem~\ref{theo:girsanov}, and the fact $m_Q(s)= J \mathbb{E}^Q[f(X_s)]$,  this yields the equations \eqref{eq:dynmean} of the dynamics.
\end{proof}

\begin{remark}
Note that as a side result we have proved \eqref{eq:Pbrownian} in Remark~\ref{rem:Pbrownian}.
	Indeed,  for a general \(\lambda\), we recall the Volterra type equation
	\[
					C_t  =  \lambda W_t + \displaystyle \int_0^t \int_0^s \dfrac{1}{\lambda^2}\tilde{K}_Q^{s,\lambda}(s,u) \,dC_u\,ds.
	\]
The process \(\frac{C_t}{\lambda}\) solves a Volterra equation of type \eqref{eq:voltc} with \(c\equiv 0\) and \(\kappa(t,s) = \frac{1}{\lambda^2}\tilde{K}^{t,\lambda}(t,s)\mathbbm{1}_{0\leq s \leq t}\).
\end{remark}

\begin{remark}\label{rem:effective}
This result is a precise description of the somewhat mysterious ``effective interaction term'' used in the papers  \cite{meegen-kuhn-etal:21} and \cite{cabana-touboul:13}\footnote{This term was derived by an incorrect application of Girsanov's Theorem, Theorem~\ref{theo:girsanov}, in \cite{cabana-touboul:13}}. Indeed, the equation \eqref{eq:browniansrelation}
\[
B_t = \frac{1}{\lambda}\left(  \lambda W_t +  \int_0^t \widetilde{W}_s \, ds + 
		\int_0^t \left( \int_0^s H_Q(s,u) \widetilde{W}_u\,du  \right) \,ds + \int_0^t m_Q(s)\,ds\right),
\]
allows us to rewrite the meanfield dynamics \eqref{eq:dynmean} as
\[
\begin{cases}
X_t  &= X_0 + \int_0^t g(X_s) \, ds +  J \int_0^t \mathbb{E}^Q[f(X_s)]\,ds +\lambda  \int_0^t \widetilde{W}_s \, ds  \\
		&  \quad \quad + \lambda \int_0^t \left( \int_0^s H_Q(s,u) \widetilde{W}_u\,du  \right)\,ds + \lambda W_t \\
\widetilde{W}_t &= \frac{1}{\lambda^2} \int_0^t \tilde{K}_Q^{t,\lambda}(t,s)\,dW_s \\
H_Q(t,s) &= \sum_{n \geq 0} \left( \frac{1}{\lambda^2} \tilde{K}_Q^{t,\lambda}(t,s)\mathbbm{1}_{s\leq t}\right)^{(n+1)}\\
K^{t,\lambda}_Q(t,s) &= \sigma^2 \mathbb{E}^Q\big[ f(X_t)f(X_s)\big]\\
				\text{{\rm Law of }} X & =  Q, \quad Q_{|\mathcal{F}_0} = \mu_0
\end{cases}
\]
where $W$ is a $Q$-Brownian motion and $\tilde{K}_Q^{t,\lambda}$ is given by \eqref{eq:KQtildeopb} as a function of $K^{t,\lambda}_Q$. The dependency of the effective interaction term 
$J \mathbb{E}^Q[f(X_t)] + \lambda\widetilde{W}_t  +  \lambda \int_0^t H_Q(t,s) \widetilde{W}_s\,ds $ upon the Brownian $W$ and the law $Q$ is clear. It is a Gaussian process with mean $J  \mathbb{E}^Q[f(X_t)]$. Its covariance can be computed using standard Itô calculus as a function of $\tilde{K}_Q^{t,\lambda}$.
\end{remark}

\section{The special case $f=1$}\label{sect:f1}
We now study the special case where the function $f$ in \eqref{eq:network} is constant and 
equal for example to 1. This means that each neuron $j$ contributes exactly $\jij$ to the 
activity of neuron $i$, independently of the value $X^j_t$ of its membrane potential. It can be seen as a special case of the H-model when the sigmoidal function $f$ is flat. 
It may appear as a toy example but it is useful as it shows the abstract concepts of the previous sections at work. We note 
$Q_1$ the corresponding limit law of the network.
\subsection{Annealed mean-field dynamics}
The third equation in \eqref{eq:dynmean} reads in this case
\[
K_{Q_1}^t(u,s) = \sigma^2, \quad \forall u,\,s \leq t \leq T
\]
We prove the following Lemma:
\begin{lemma}\label{lem:KQ1tilde}
We have:
\begin{equation}\label{eq:KQ1tilde}
\tilde{K}^{t,\lambda}_{Q_1}(u,s)  =\frac{\sigma^2 \lambda^2}{\lambda^2+\sigma^2 t}, \quad \forall u,\,s \leq t \leq T
\end{equation}
\end{lemma}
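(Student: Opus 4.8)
The plan is to compute $\bar{\tilde K}^{t,\lambda}_{Q_1}$ directly from its definition \eqref{eq:KQtildeopb} and read off its kernel, exploiting the fact that the operator $\bar K^t_{Q_1}$ attached to the constant kernel $\sigma^2$ is rank one. Concretely, for $\psi\in L^2([0,t])$ one has $(\bar K^t_{Q_1}\psi)(s)=\sigma^2\int_0^t\psi(u)\,du$, so if $P$ denotes the orthogonal projection of $L^2([0,t])$ onto the line of constant functions, $(P\psi)(s)=\frac1t\int_0^t\psi(u)\,du$, then $\bar K^t_{Q_1}=\sigma^2 t\,P$, where the factor $t=\|\mathbf 1\|^2_{L^2([0,t])}$.

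First I would diagonalise. Since $P$ is a projection, $\mathrm{Id}+\frac1{\lambda^2}\bar K^t_{Q_1}=(\mathrm{Id}-P)+\bigl(1+\tfrac{\sigma^2 t}{\lambda^2}\bigr)P$, which is invertible (as already noted in Remark~\ref{rem:contraction}) with inverse $(\mathrm{Id}-P)+\frac{\lambda^2}{\lambda^2+\sigma^2 t}P$. Composing with $\bar K^t_{Q_1}=\sigma^2 t\,P$ and using $P(\mathrm{Id}-P)=0$ and $P^2=P$, I obtain
\[
\bar{\tilde K}^{t,\lambda}_{Q_1}=\bar K^t_{Q_1}\circ\Bigl(\mathrm{Id}+\tfrac1{\lambda^2}\bar K^t_{Q_1}\Bigr)^{-1}=\frac{\sigma^2 t\lambda^2}{\lambda^2+\sigma^2 t}\,P .
\]
Unwinding the definition of $P$, this says $(\bar{\tilde K}^{t,\lambda}_{Q_1}\psi)(s)=\frac{\sigma^2\lambda^2}{\lambda^2+\sigma^2 t}\int_0^t\psi(u)\,du$ for every $\psi$, i.e. $\bar{\tilde K}^{t,\lambda}_{Q_1}$ has the constant kernel $\tilde K^{t,\lambda}_{Q_1}(u,s)=\frac{\sigma^2\lambda^2}{\lambda^2+\sigma^2 t}$, which is \eqref{eq:KQ1tilde}.

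Alternatively — and this is perhaps the slickest route — one can invoke Lemma~\ref{lem:KKtilde} directly: a centered Gaussian process with the constant covariance $K_{Q_1}\equiv\sigma^2$ is the degenerate process $G_s=\sigma Z$ with $Z\sim\mathcal N(0,1)$, so the tilting factor in Lemma~\ref{lem:KKtilde} is $\exp\{-\frac1{2\lambda^2}\int_0^t G_s^2\,ds\}=\exp\{-\frac{\sigma^2 t}{2\lambda^2}Z^2\}$, under which $Z$ becomes a centered Gaussian with variance $(1+\sigma^2 t/\lambda^2)^{-1}=\lambda^2/(\lambda^2+\sigma^2 t)$. Hence $\tilde K^{t,\lambda}_{Q_1}(u,s)=\mathbb E_{\tilde\nu^t}[G_uG_s]=\sigma^2\,\mathbb E_{\tilde\nu^t}[Z^2]=\frac{\sigma^2\lambda^2}{\lambda^2+\sigma^2 t}$, the same answer.

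There is no genuine obstacle here: the computation is elementary once the rank-one structure of $\bar K^t_{Q_1}$ is noticed. The only two points needing a line of care are the normalisation of the projector $P$ (the factor $t$) and, in the probabilistic route, the observation that a Gaussian process whose covariance does not depend on the time arguments is carried by a single Gaussian variable; both are routine. Note also that, unlike in Remark~\ref{rem:lambda_changement_variable}, I keep $\lambda$ explicit throughout, since \eqref{eq:KQtildeopb} and the claimed formula \eqref{eq:KQ1tilde} do.
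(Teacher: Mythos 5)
Your proposal is correct. Your first argument is in substance the paper's own proof: the paper identifies $(\mathrm{Id}+\frac{1}{\lambda^2}\bar K^t_{Q_1})^{-1}$ by writing $\varphi=\psi+\frac{\sigma^2}{\lambda^2}\int_0^t\psi$ and integrating over $[0,t]$ to get $\int_0^t\psi=(1+\frac{\sigma^2 t}{\lambda^2})^{-1}\int_0^t\varphi$, which is exactly your rank-one computation $\bar K^t_{Q_1}=\sigma^2 t\,P$ written out in coordinates; the projector language just makes the diagonalisation explicit. Your second, probabilistic route via Lemma~\ref{lem:KKtilde} is genuinely different from what the paper does and is worth recording: the observation that a centered Gaussian process with covariance identically $\sigma^2$ degenerates to $G_s=\sigma Z$ (since $\mathbb{E}[(G_u-G_s)^2]=0$) reduces the tilting to a one-dimensional Gaussian computation, and the factor $\lambda^2/(\lambda^2+\sigma^2 t)$ appears transparently as the posterior variance of $Z$ under the weight $\exp\{-\frac{\sigma^2 t}{2\lambda^2}Z^2\}$. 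The operator route is the one that scales to non-constant kernels (and is what the rest of the paper needs), while the probabilistic route explains where the formula comes from and doubles as a consistency check on Lemma~\ref{lem:KKtilde} in this degenerate case. Both normalisation points you flag (the factor $t=\|\mathbf 1\|_{L^2([0,t])}^2$ in $P$, and the degeneracy of the constant-covariance process) are handled correctly.
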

\begin{proof}
Given $\varphi \in L^2([0,T])$ we have by \eqref{eq:barK}
\begin{equation}\label{eq:KQ1phi}
\bar{K}_{Q_1}^t \cdot \varphi(s) =  \sigma^2 \int_0^t \varphi(u)\,du
\end{equation}
Next we identify the kernel of 
$({\rm Id}+\frac{1}{\lambda^2}\bar{K}_{Q_1}^t)^{-1}$. 
Given $\varphi \in L^2([0,T])$, denote by $\psi$ its image by 
$({\rm Id}+\frac{1}{\lambda^2}\bar{K}_{Q_1}^t)^{-1}$. By \eqref{eq:KQ1phi} we have
\[
\varphi(s) = ({\rm Id}+\frac{1}{\lambda^2}\bar{K}_{Q_1}^t) \cdot \psi (s) = 
\psi(s) + \dfrac{\sigma^2}{\lambda^2} \int_0^t \psi(u)\,du\ \forall s \leq t.
\]
Integrating both sides between 0 and $t$ yields
\begin{equation}\label{eq:varpsi}
\int_0^t \varphi(s)\,ds = \int_0^t \psi(s)\,ds +  \dfrac{\sigma^2}{\lambda^2} t \int_0^t \psi(u)\,du = \left(1 +  \dfrac{\sigma^2}{\lambda^2}t \right) \int_0^t \psi(s)\,ds.
\end{equation}
We conclude that for all $\varphi \in L^2([0,T])$, by \eqref{eq:KQ1phi} and \eqref{eq:varpsi}
\[
 \left( \bar{K}_{Q_1}^t \circ ({\rm Id}+\frac{1}{\lambda^2}\bar{K}_{Q_1}^t)^{-1} \right) \cdot \varphi(t) = 
 \bar{K}_{Q_1}^t \cdot \psi(t) =  \sigma^2 \int_0^t \psi(s)\,ds = \sigma^2
  \dfrac{1}{1+ \dfrac{\sigma^2}{\lambda^2}
 	 t} \int_0^t \varphi(s)\,ds,
\]
which proves the Lemma.
\end{proof}
\begin{remark}
This is an illustration of the fact, mentioned in Remark~\ref{rem:Ktildetspecial}, that $\bar{\tilde{K}}^{s,\lambda}$ is not the restriction of $\bar{\tilde{K}}^{t,\lambda}$, $s < t \leq T$, to $L^2([0,s])$. Indeed, the kernel $\tilde{K}^{t,\lambda}(u,v)$ is constant and equal to $\frac{\sigma^2 \lambda^2}{\lambda^2+\sigma^2 t}$ on $[0,s] \times [0,s]$. while $\tilde{K}^{s,\lambda}(u,v)$ is also constant but equal to $\frac{\sigma^2 \lambda^2}{\lambda^2+\sigma^2 s}$.
\end{remark}

\begin{lemma}\label{lem:HQ1}
We have:
\begin{equation}\label{eq:HQ1}
H_{Q_1}(t,s) = \dfrac{\sigma^2 }{\lambda^2+\sigma^2 s}\mathbbm{1}_{s \leq t} 
\end{equation}
\end{lemma}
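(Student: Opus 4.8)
The plan is to substitute the explicit kernel furnished by Lemma~\ref{lem:KQ1tilde} into the resolvent (Neumann) series \eqref{eq:HQt}--\eqref{eq:H} that defines $H_{Q_1}$. Recall from Remark~\ref{rem:effective} (equivalently, from the last step of the proof of Theorem~\ref{theo:dynmean}) that $H_{Q_1}$ is the resolvent kernel of the Volterra kernel $\kappa(t,s)=\frac{1}{\lambda^{2}}\tilde K^{t,\lambda}_{Q_1}(t,s)\,\mathbbm{1}_{s\le t}$. By Lemma~\ref{lem:KQ1tilde} this reduces to
\[
\kappa(t,s)=a(t)\,\mathbbm{1}_{s\le t},\qquad a(t):=\frac{\sigma^{2}}{\lambda^{2}+\sigma^{2}t}.
\]
The structural point driving the computation is that this kernel depends on its \emph{first} argument only, which is exactly what makes the iterated kernels telescope into an exponential.

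First I would verify that $\kappa$ satisfies hypothesis \eqref{eq:hypkappa}: since $\int_{0}^{t}\kappa(t,s)^{2}\,ds=t\,a(t)^{2}$ is bounded on $[0,T]$, Proposition~\ref{prop:BtWt} applies and $H_{Q_1}(t,s)=\sum_{n\ge 0}\kappa^{(n+1)}(t,s)$, where (by the Remark following Proposition~\ref{prop:BtWt}) the iterated kernels obey the $t$-independent rule $\kappa^{(n+1)}(t,s)=\int_{s}^{t}\kappa(t,v)\,\kappa^{(n)}(v,s)\,dv$ for $s\le t$. Putting $A(s,t):=\int_{s}^{t}a(v)\,dv=\log\frac{\lambda^{2}+\sigma^{2}t}{\lambda^{2}+\sigma^{2}s}$ and using the identity $a(v)A(s,v)^{n}=\frac{1}{n+1}\frac{d}{dv}A(s,v)^{n+1}$, a one-line induction yields $\kappa^{(n+1)}(t,s)=a(t)\,\frac{A(s,t)^{n}}{n!}\,\mathbbm{1}_{s\le t}$ for every $n\ge 0$. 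Summing the series,
\[
H_{Q_1}(t,s)=a(t)\,e^{A(s,t)}\,\mathbbm{1}_{s\le t}=\frac{\sigma^{2}}{\lambda^{2}+\sigma^{2}t}\cdot\frac{\lambda^{2}+\sigma^{2}t}{\lambda^{2}+\sigma^{2}s}\,\mathbbm{1}_{s\le t}=\frac{\sigma^{2}}{\lambda^{2}+\sigma^{2}s}\,\mathbbm{1}_{s\le t},
\]
which is \eqref{eq:HQ1}.

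A slightly slicker variant skips the induction and instead checks that the \emph{claimed} kernel $H(t,s)=\frac{\sigma^{2}}{\lambda^{2}+\sigma^{2}s}\mathbbm{1}_{s\le t}$ --- which, tellingly, does not depend on $t$ --- solves the resolvent identity \eqref{eq:int3b}. In kernel form $\bar H=\bar\kappa+\bar\kappa\circ\bar H$ reads $H(t,s)=\kappa(t,s)+\int_{s}^{t}\kappa(t,v)H(v,s)\,dv$, and substituting the ansatz on the right-hand side gives $a(t)+a(t)\,\frac{\sigma^{2}(t-s)}{\lambda^{2}+\sigma^{2}s}=\frac{\sigma^{2}}{\lambda^{2}+\sigma^{2}s}$, so the ansatz is a fixed point; uniqueness of the resolvent (part of Proposition~\ref{prop:BtWt}) then identifies this kernel with $H_{Q_1}$. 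I do not foresee any genuine obstacle here; the only steps demanding care are extracting the $\lambda^{-2}$-scaled Volterra kernel $\kappa$ correctly from Lemma~\ref{lem:KQ1tilde} and invoking the $t$-independence of the iterated composition so that the Neumann series can legitimately be summed kernelwise.
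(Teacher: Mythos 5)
Your proposal is correct and follows essentially the same route as the paper: identify the Volterra kernel $\kappa(t,s)=\frac{1}{\lambda^2}\tilde K^{t,\lambda}_{Q_1}(t,s)\mathbbm{1}_{s\le t}=\frac{\sigma^2}{\lambda^2+\sigma^2 t}\mathbbm{1}_{s\le t}$, compute the iterated kernels $\kappa^{(n+1)}(t,s)=\frac{\sigma^2}{\lambda^2+\sigma^2 t}\frac{1}{n!}\bigl(\log\frac{\lambda^2+\sigma^2 t}{\lambda^2+\sigma^2 s}\bigr)^{n}\mathbbm{1}_{s\le t}$, and sum the exponential series in \eqref{eq:HQt}; your explicit induction via $A(s,t)$ merely formalises the pattern the paper exhibits term by term, and your alternative check of the resolvent identity \eqref{eq:int3b} matches the verification the paper records in the remark following the lemma.
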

\begin{proof}
By definition
\[
H_{Q_1}(t,s) = \sum_{n \geq 0} \left( \frac{1}{\lambda^2}\tilde{K}_{Q_1}^{t, \lambda}(t,s)\mathbbm{1}_{s \leq t}\right)^{(n+1)}
\]
We obtain successively
\begin{align*}
\left( \frac{1}{\lambda^2}\tilde{K}_{Q_1}^{t,\lambda}
(t,s)\mathbbm{1}_{s \leq t}\right)^{(1)} &= \frac{1}{\lambda^2}\tilde{K}_{Q_1}^{t, \lambda}
(t,s)\mathbbm{1}_{s \leq t} = \frac{\sigma^2 }{\lambda^2+\sigma^2 t}\mathbbm{1}_{s \leq t}\\
\left(\frac{1}{\lambda^2}\tilde{K}_{Q_1}^{t, \lambda}
(t,s)\mathbbm{1}_{s \leq t}\right)^{(2)} &= \frac{1}{\lambda^4} \int_s^t  \tilde{K}_{Q_1}^{t, \lambda}
(t,u)  \tilde{K}_{Q_1}^{u, \lambda}
(u,s)\,du = 
\frac{\sigma^2 }{\lambda^2+\sigma^2 t} \log \frac{\lambda^2+\sigma^2 t}{\lambda^2+\sigma^2 s}\mathbbm{1}_{s \leq t}\\
\left(\frac{1}{\lambda^2}\tilde{K}_{Q_1}^{t, \lambda}
(t,s)\mathbbm{1}_{s \leq t}\right)^{(3)} &= \int_s^t  \left( \frac{1}{\lambda^2}\tilde{K}_{Q_1}^{t, \lambda}
(t,u)\right)^{(1)}
 \left(\frac{1}{\lambda^2}\tilde{K}_{Q_1}^{t, \lambda}
(u,s)\right)^{(2)}du 
= \\
&\ \quad \frac{\sigma^2}{\lambda^2 + \sigma^2 t} \frac{1}{2}\left(\log \frac{\lambda^2+\sigma^2 t}{\lambda^2+\sigma^2 s} \right)^2\mathbbm{1}_{s \leq t}\\
\vdots &\\
\left(\frac{1}{\lambda^2}\tilde{K}_{Q_1}^{t, \lambda}
(t,s)\mathbbm{1}_{s \leq t}\right)^{(n)} &= \frac{\sigma^2}{\lambda^2+\sigma^2 t} \frac{1}{(n-1)!}\left(\log \frac{\lambda^2 + \sigma^2 t}{\lambda^2 + \sigma^2 s} \right)^{(n-1)}\mathbbm{1}_{s \leq t},
\end{align*}
and therefore, by \eqref{eq:HQt}, we obtain the Lemma.
\end{proof}
\begin{remark}
One finally verifies \eqref{eq:int3b} in the case $\kappa(t,s) = \frac{1}{\lambda^2} \tilde{K}_{Q_1}^{t,\lambda}(t,s)\mathbbm{1}_{s \leq t}$:
\begin{align*}
\frac{1}{\lambda^2} \tilde{K}_{Q_1}^{t, \lambda}(t,s)\mathbbm{1}_{s \leq t} - H_{Q_1}(t,s) & =\dfrac{\sigma^2}{\lambda^2+ \sigma^2 t} -  \dfrac{\sigma^2}{\lambda^2+\sigma^2 s}  \\
                                                  & = - \int_s^t \frac{1}{\lambda^2}\tilde{K}_{Q_1}^{t, \lambda}(t,u) H_{Q_1}(u,s)\,du,
\end{align*}
i.e.
\[
\frac{1}{\lambda^2} \bar{\tilde{K}}_{Q_1}^{t, \lambda} + \frac{1}{\lambda^2}\bar{\tilde{K}}_{Q_1}^{t, \lambda} \circ \bar{H}_{Q_1} = \bar{H}_{Q_1}.
\]
\end{remark}
Let us now write the equations \eqref{eq:dynmean} of the dynamics is this case. We clearly have 
\[
m_Q(t) = J,
\]
and by the second equation in \eqref{eq:dynmean}
\[
C_t = \lambda W_t + \sigma^2\int_0^t \frac{ C_s}{\lambda^2 + \sigma^2 s} \,ds
\]
We can then compute $C_t$ as a function of the $Q_1$ Brownian motion $W_t$ by solving the previous linear SDE obtaining
\begin{equation}\label{eq:B0}
C_t =\lambda\left(\lambda^2+\sigma^2 t\right) \int_0^t \frac{dW_s}{\lambda^2 +\sigma^2 s}
\end{equation}
and write the corresponding dynamics as
\begin{equation}\label{eq:dynQ1W}
X_t = X_0 + \int_0^t g(X_s)\,ds + J t+ \lambda\left(\lambda^2+\sigma^2 t\right) \int_0^t \frac{dW_s}{\lambda^2+\sigma^2 s}
\end{equation}
or equivalently in  differential form
\begin{equation}\label{eq:dynQ1WSDE}
dX_t = \left( g(X_t) + J +  \lambda  \sigma^2   \int_0^t \frac{dW_s}{\lambda^2 + \sigma^2 s}   \right)\,dt + \lambda dW_t
\end{equation}
\begin{remark}
The ``effective interaction term'' of Remark~\ref{rem:effective}, $J +  \lambda  \sigma^2   \int_0^t \frac{dW_s}{\lambda^2 + \sigma^2 s} $, 
has mean $J$ and variance $\frac{t \sigma^4}{\lambda^2+\sigma^2 t}$ at time $t$ as can be easily verified. Note that this is different from 
the variance reported in previous work such as \cite{faugeras_constructive_2009}[Equations (25)] or \cite{cabana-touboul:13}[Section 3.3].
 It shows the advantage of the approach developed in this article of working out the exact dependency of the effective interaction term with the Brownian.
\end{remark}

\subsection{Quenched mean-field dynamics}
The first equation in \eqref{eq:network} writes in this case
\begin{equation}\label{eq:xit}
dX^i_t =  \left( -X^i_t+S_i\right)\,dt+\lambda dB^i_t \quad 1 \leq i \leq N,
\end{equation}
where 
\[
S_i = \sum_{j=1}^N \jij.
\]
Since the $\jij$s are i.i.d. as $\mathcal{N}(J/N,\sigma^2/N)$, the $S_i$s are i.i.d. as $\mathcal{N}(J,\sigma^2)$. The propagation of chaos is obvious in this case since the $N$ equations \eqref{eq:xit} are independent and the quenched mean-field dynamics is also obvious.
\begin{align}\label{eq:quencheddynamics}
dX_t & = \left( -X_t+G\right)\,dt+\lambda dB_t, \\
G &\simeq \mathcal{N}(J,\sigma^2),\, \quad \text{independent of  } B \nonumber
\end{align}
\subsection{Comparison between the quenched and annealed dynamics}
We know from the results in \cite{ben-arous-guionnet:95,guionnet:97}
 that these two dynamics should be identical. It is unclear though that the solutions to \eqref{eq:dynQ1WSDE} and \eqref{eq:quencheddynamics} have the same laws. In effect they do, as shown in the next Proposition.

\begin{proposition}
The solutions to \eqref{eq:dynQ1WSDE} and \eqref{eq:quencheddynamics} are two Gaussian processes with identical means and covariance functions, assuming identical Gaussian distributed initial conditions.
\end{proposition}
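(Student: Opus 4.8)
The plan is to realise both processes as the image of one fixed deterministic continuous functional applied to a Gaussian driving path, and then to match the laws of the two driving paths.

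Since here $g(x)=-x$, each of \eqref{eq:dynQ1WSDE} and \eqref{eq:quencheddynamics} has the form $dX_t=-X_t\,dt+dZ_t$ with $Z_0=0$: in the annealed case $Z_t=Z^{(1)}_t:=Jt+C_t$ with $C_t=\lambda(\lambda^2+\sigma^2t)\int_0^t\frac{dW_s}{\lambda^2+\sigma^2s}$ as in \eqref{eq:B0}, and in the quenched case $Z_t=Z^{(2)}_t:=Gt+\lambda B_t$. Variation of constants gives $X_t=e^{-t}X_0+\int_0^t e^{-(t-u)}\,dZ_u$, and integrating by parts (legitimate since $Z_0=0$, and after which no stochastic integral remains) yields the pathwise identity $X_t=e^{-t}X_0+Z_t-\int_0^t e^{-(t-u)}Z_u\,du=:\Psi(X_0,Z)_t$, valid in both cases with the \emph{same} map $\Psi$, which is continuous in $(X_0,Z)$ for the uniform topology on $\mathbb{R}\times\mathcal{C}_T$. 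In both models $X_0$ has the prescribed common Gaussian law and is independent of the driving noise, so the joint law of $(X_0,Z^{(i)})$ is a product; hence it suffices to show that $Z^{(1)}$ and $Z^{(2)}$ have the same law on $\mathcal{C}_T$.

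Both $Z^{(i)}$ are Gaussian processes: $Z^{(2)}$ because $(B,G)$ is jointly Gaussian, and $Z^{(1)}$ because $M_t:=\int_0^t(\lambda^2+\sigma^2s)^{-1}\,dW_s$ is a Wiener integral, so $C_t=\lambda(\lambda^2+\sigma^2t)M_t$ and $Z^{(1)}$ are Gaussian. They have the same deterministic mean $Jt$, so it remains only to match covariances, equivalently to show that $C$ and $t\mapsto(G-J)t+\lambda B_t$ have the same covariance. The latter equals $\sigma^2 st+\lambda^2(s\wedge t)$. For the former, $\langle M\rangle_t=\int_0^t(\lambda^2+\sigma^2 s)^{-2}\,ds=\frac{t}{\lambda^2(\lambda^2+\sigma^2 t)}$, so for $s\le t$,
\[
\mathrm{Cov}(C_t,C_s)=\lambda^2(\lambda^2+\sigma^2 t)(\lambda^2+\sigma^2 s)\,\langle M\rangle_s=s(\lambda^2+\sigma^2 t)=\lambda^2(s\wedge t)+\sigma^2 st .
\]
The two covariances coincide; being centred, continuous and Gaussian, $Z^{(1)}$ and $Z^{(2)}$ have the same law, hence so do $X^{(1)}=\Psi(X_0,Z^{(1)})$ and $X^{(2)}=\Psi(X_0,Z^{(2)})$. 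In particular both are Gaussian with the same mean — which solves $\dot m=-m+J$, $m(0)=\mathbb{E}[X_0]$ — and the same covariance function.

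All the computations are elementary; the only point where something genuinely happens is the cancellation that collapses $\mathrm{Cov}(C_t,C_s)$ to $\lambda^2(s\wedge t)+\sigma^2 st$, i.e. the non-obvious fact that the rescaled, time-changed martingale $C$ carries exactly the covariance of an independent $\mathcal{N}(0,\sigma^2)$ drift-slope plus a Brownian motion. The one modelling point to be careful about is that the initial datum enters both dynamics through the \emph{same} affine term $e^{-t}X_0$ and is independent of the respective noise, so it contributes identically to mean and covariance; the statement genuinely needs the ``identical initial conditions'' hypothesis.
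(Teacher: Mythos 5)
Your proof is correct, and it takes a genuinely different route from the paper's. The paper argues coordinate-wise: it solves the quenched SDE explicitly to read off $\mathbb{E}[X_t]$ and $\var(X_t)$, and for the annealed dynamics it derives ODEs for the mean and for $v(t)=\mathbb{E}[(Y_t-\mathbb{E}Y_t)^2]$ via It\^o's formula, which requires the auxiliary computation of the cross term $\mathbb{E}\bigl[Z_t(Y_t-\mathbb{E}Y_t)\bigr]$ with $dZ_t=dW_t/(\lambda^2+\sigma^2t)$; it then matches the two pairs of formulas. You instead factor both dynamics through the single continuous pathwise map $\Psi(X_0,Z)_t=e^{-t}X_0+Z_t-\int_0^te^{-(t-u)}Z_u\,du$ and reduce everything to showing that the two Gaussian drivers $Jt+C_t$ and $Gt+\lambda B_t$ share the mean $Jt$ and the covariance $\lambda^2(s\wedge t)+\sigma^2st$ — a one-line computation once $\langle M\rangle_t=t/\bigl(\lambda^2(\lambda^2+\sigma^2t)\bigr)$ is known. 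Your computations check out (in particular the cancellation $\lambda^2(\lambda^2+\sigma^2t)(\lambda^2+\sigma^2s)\langle M\rangle_{s\wedge t}=\lambda^2(s\wedge t)+\sigma^2st$), and the approach buys something real: equality of the drivers' laws on $\mathcal{C}_T$ transfers through $\Psi$ to equality of the \emph{full} laws of the two solution processes, hence of the covariance functions $\cov(X_t,X_s)$ for all $s,t$ — which is what the proposition actually asserts — whereas the paper's argument, as written, only verifies the means and the variances $\var(X_t)$ along the diagonal. The price is mild: you must justify the pathwise integration by parts (harmless, since $u\mapsto e^{-(t-u)}$ is deterministic and of bounded variation, so no covariation term appears) and be explicit that $X_0$ is independent of the driving noise in both models, which you do.
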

\begin{proof}
Gaussianity is clear. We show that the two processes have the same mean and variance. 
The solution to the quenched mean-field dynamics \eqref{eq:quencheddynamics} is readily computed as
\begin{equation}\label{eq:limitepourfegal1}
X_t = X_0 e^{-t} + G(1- e^{-t}) + e^{-t}\lambda\int_0^te^{s} dW_s
\end{equation}
where \(G\simeq \mathcal{N}(J,\sigma^2)\), \(X_0\) et \(W\) are assumed to be  independent. Hence, taking the expected value 
\begin{equation}\label{eq:meanf1direct}
\mathbb{E}(X_t) = J + (\mathbb{E}(X_0) - J)e^{-t},
\end{equation}
and
\[
\var(X_t) = \var(G)(1-e^{-t})^2 + \var(X_0)e^{-2t} + \lambda^2\dfrac{1-e^{-2t}}{2},
\]
and therefore
\begin{equation}\label{eq:varf1direct}
\var(X_t) = \var(G) + \dfrac{\lambda^2}{2} - 2\var(G)e^{-t} + \left(\var(G) + \var(X_0) - \dfrac{\lambda^2}{2}\right)e^{-2t}
\end{equation}
Assuming $g(x)=-x$ we write \eqref{eq:dynQ1WSDE} as (using the notation $Y_t$  instead of $X_t$ for sake of clarity)
\begin{equation}\label{eq:dynQ1WSDE1}
dY_t = \left( - Y_t + J +  \lambda  \sigma^2   \int_0^t \frac{dW_s}{\lambda^2 +\sigma^2 s}   \right)\,dt + \lambda dW_t
\end{equation}
so that, taking the expected value of both sides
\begin{equation}\label{eq:mfmean}
d \mathbb{E}(Y_t) = (-\mathbb{E}(Y_t)  + J) dt,
\end{equation}
and
\[
d(Y_t-  \mathbb{E}(Y_t)) = \left(-(Y_t-  \mathbb{E}(Y_t))
 +   \lambda  \sigma^2  \int_0^t \frac{dW_s}{\lambda^2+\sigma^2 s}   \right) dt
 + \lambda dW_t.
\]
Equation \eqref{eq:mfmean} is readily integrated yielding
\begin{equation}\label{eq:mfmeanvalue}
 \mathbb{E}(Y_t)= J + (\mathbb{E}(X_0) - J) e^{-t}
\end{equation}
This equation is identical to \eqref{eq:meanf1direct}: the two processes $X$ and $Y$ have the same mean.

An application of It\^o's formula to $(Y_t-  \mathbb{E}(Y_t))^2$ yields
\begin{align*}
d(Y_t -  \mathbb{E}(Y_t))^2 =& 
	  -2 (Y_t -  \mathbb{E}(Y_t))^2 dt + 
	2  \lambda\sigma^2   (Y_t -  \mathbb{E}(Y_t))  \int_0^t \frac{ dW_s}{\lambda^2+\sigma^2 s} dt  \\
	& +2 \lambda (Y_t -  \mathbb{E}(Y_t)) dW_t + \lambda^2 dt.
\end{align*}
Define $v(t) := \mathbb{E}[(Y_t - \mathbb{E}(Y_t))^2]$. Taking the expected value of both sides of the previous equation we obtain
\begin{equation}\label{eq:vt}
v(t) = v(0) - 2 \int_0^t v(s)\,ds +  \lambda^2 t + 2  \lambda\sigma^2 \int_0^t \mathbb{E}\left[ (Y_s - \mathbb{E}(Y_s))  \int_0^s \frac{dW_\theta}{\lambda^2 +  \sigma^2 \theta} \right]ds 
\end{equation}
Define \(Z_t\) by
\begin{equation}\label{eq:Zt}
dZ_t =  \dfrac{dW_t}{\lambda^2+\sigma^2 t}.
\end{equation}
So, we have
\[
d(Y_t - \mathbb{E}(Y_t)) = (- (Y_t - \mathbb{E}(Y_t)) + \lambda\sigma^2 Z_t ) dt + \lambda dW_t.
\]
We apply Itô's product formula to $Z_t(Y_t - \mathbb{E}(Y_t)) $:
\begin{multline*}
d (Z_t (Y_t - \mathbb{E}(Y_t)) ) = \\
(- Z_t (Y_t - \mathbb{E}(Y_t)) + \lambda\sigma^2 Z_t^2 ) dt + \lambda Z_t dW_t + \frac{\lambda}{\lambda^2 + \sigma^2 t} dt + \frac{Y_t - \mathbb{E}(Y_t)}{\lambda^2 + \sigma^2 t} dW_t.
\end{multline*}
According to \eqref{eq:Zt} we have
\begin{align*}
	\mathbb{E}[Z_t^2]  = \dfrac{t}{\lambda^2(\lambda^2 + \sigma^2 t)}.
\end{align*}
Integrate between 0 and $t$, use $Z_0 = 0$, and take the expected value to obtain
\[
\mathbb{E}[Z_t (Y_t - \mathbb{E}(Y_t))] = \int_0^t  - \mathbb{E}[Z_s (Y_s - \mathbb{E}(Y_s))]  + \dfrac{1}{\lambda} ds.
\]
That is 
\[
\mathbb{E}[Z_t (Y_t - \mathbb{E}(Y_t))] = \dfrac{1 - e^{-t}}{\lambda}.
\]
Thus \eqref{eq:vt} writes
\[
v(t) = v(0) -2 \int_0^t v(s) \, ds + \lambda^2 t + 2 \sigma^2 \int_0^t (1 - e^{-s})\,ds.
\]
This yields
\[
v(t) = \sigma^2 + \frac{\lambda^2}{2} -2 \sigma ^2 e^{-t} + (\sigma^2 + v(0) - \frac{\lambda^2}{2}) e^{-2t},
\]
which is identical to \eqref{eq:varf1direct}.
 \end{proof}

\section{Uniqueness of Q}\label{sect:uniqueness}
We prove the uniqueness of the measure $Q$ in \eqref{eq:dynmean}. Such a proof has been provided in \cite{ben-arous-guionnet:95} for the S-model in the case $J=0$ and is not constructive. Our proof is valid for the H-model as well as the S-model for any value of $J$. It is based upon a fixed-point approach and is constructive. In effect it is the basis for a numerical algorithm described in Section~\ref{sect:numerics}. 

To simplify notations we assume $\lambda=1$ and rewrite the system \eqref{eq:dynmean} in this case using the results of the previous sections. We will be using several times in our proof Proposition~\ref{prop:BtWt} for different values of the kernel $\kappa$.

According to \eqref{eq:Btfinalb} in the case $c\equiv 0$ and $\kappa = \tilde{K}^t_Q$, \eqref{eq:dynmean} is equivalent to
\begin{multline}\label{eq:newdyn}
X_t  = X_0 + \int_0^t g(X_s) \, ds + J \int_0^t \mathbb{E}^Q[f(X_s)]\,ds + W_t\\
+ \int_0^t \widetilde{W}_s\,ds + \int_0^t \left( \int_0^s H_Q(s,u) \widetilde{W}_u\,du\right)\,ds 
\end{multline}
$W_t$ is $Q$-Brownian motion,  $\widetilde{W}_t$ is given by \eqref{eq:tildeWb} where the correlation function 
\(\tilde{K}_Q^t\) is linked to \(K_Q(t,s)= \sigma^2 \mathbb{E}_Q[f(X_t)f(X_s)]\) by 
\eqref{eq:KQtildeopb}.
Finally the function $H_Q$ is given by \eqref{eq:H}.

We consider the family of Polish spaces, Cartesian products of $L^2([0,t])$ with $\mathcal{K}_t$, the set of Trace-Class operators on $L^2([0,t])$, for $0 < t \leq T$. The space $L^2([0,t]) \times \mathcal{K}_t$ is endowed with the  norm:
\begin{equation}\label{eq:norm}
\|(m,\bar{K})\|_t^2 = \| m \|_{L^2([0,t])}^2 + \| \bar{K} \|_{t,2}^2\quad \forall m \in L^2([0,t]),\, \bar{K} \in \mathcal{K}_t
\end{equation}
Given $\bar{K}$ in $\mathcal{K}_t$ we associate with it the elements $\bar{\tilde{K}}^t$ of $\mathcal{K}_t$
 by \eqref{eq:KQtildeopb} (in the case $\lambda=1$) and $\bar{H}^t$ by \eqref{eq:H} with $\kappa(t,s)=\tilde{K}^t(t,s)\mathbbm{1}_{t\geq s}$. 
For $(m,\bar{K})$ in $L^2([0,t]) \times \mathcal{K}_t$ we consider the process \(X^{m,\bar{K}}\)
solution of the SDE
\begin{multline}\label{eq:limitK}
X^{m,\bar{K}}_t  = X_0 + \int_0^t g(X^{m,\bar{K}}_s) \, ds + J \int_0^t m(s)\,ds + W_t\\
+\int_0^t \widetilde{W}_s\,ds + \int_0^t \left( \int_0^s H(s,u) \widetilde{W}_u\,du\right)\,ds 
\end{multline}
where $\widetilde{W}$ is given by \eqref{eq:tildeWb} with $\kappa(t,s)=\tilde{K}^t(t,s)\mathbbm{1}_{t\geq s}$. 
We  define the mapping $\Lambda$ from $L^2([0,t]) \times \mathcal{K}_t$ as follows
\begin{equation}\label{eq:pG}
\Lambda(m, \bar{K}) = ( \mathbb{E}\left[ f(X_t^{m,\bar{K}}) \right], 
\sigma^2\mathbb{E}\left[ f(X_t^{m,\bar{K}})f(X_s^{m,\bar{K}}) \right])\\
\end{equation}
where $X_t^{m,\bar{K}}$ is the solution to \eqref{eq:limitK}.

\begin{lemma}\label{lem:wellposed}
The mapping $\Lambda$ maps $L^2([0,t]) \times \mathcal{K}_t$ to itself for all $0 < t \leq T$.
\end{lemma}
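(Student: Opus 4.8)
The claim hides two things: that \eqref{eq:limitK} really defines a process $X^{m,\bar{K}}$ with paths in $\mathcal{C}$ (so that the expectations in \eqref{eq:pG} are finite and the two coordinates of $\Lambda(m,\bar{K})$ are well defined), and that these coordinates land in $L^2([0,t])$ and in $\mathcal{K}_t$ respectively. By Remark~\ref{rem:lambda_changement_variable} I work with $\lambda=1$ throughout, as in the rest of the section.

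\emph{Well-posedness of \eqref{eq:limitK}.} Rewrite the equation as $dX_r=g(X_r)\,dr+\phi(r)\,dr+dW_r$ with $\phi(r)=Jm(r)+\widetilde{W}_r+\int_0^rH(r,u)\widetilde{W}_u\,du$, an $\mathbb{F}^W$-adapted drift that does not involve $X$. The first step is to check that $\phi\in L^2([0,t])$ a.s., and in fact that $\mathbb{E}\int_0^t\phi(r)^2\,dr<\infty$. The linear term is handled by $m\in L^2([0,t])$. For the term $\widetilde{W}$ of \eqref{eq:tildeWb}, use that $\tilde{K}$ is a positive kernel, so $\tilde{K}^r(r,u)^2\le\tilde{K}^r(r,r)\,\tilde{K}^r(u,u)$; combined with $\bar{K}^r\succeq\bar{\tilde{K}}^r$ (straightforward from \eqref{eq:KQtildeopb}, since the two factors commute and are nonnegative), which yields $\tilde{K}^r(r,r)\le K(r,r)$, and with the Trace-Class bound $\|\bar{\tilde{K}}^r\|_{r,1}\le\sigma^2C^2T$ of Lemma~\ref{eq:KKtildenorms}, one gets $\int_0^t\mathbb{E}[\widetilde{W}_r^2]\,dr\le\sigma^2C^2T\int_0^tK(r,r)\,dr<\infty$ (and with $\sigma^2A^2T$ in the S-model). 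The iterated-kernel term is then controlled by the uniform bound \eqref{eq:boundnormHb} on $H$. Once $\phi$ is under control, existence, uniqueness and non-explosion of $X^{m,\bar{K}}$ follow exactly as in Lemma~\ref{lem:cameronetal}: in the H-model $g$ is globally Lipschitz and the affine equation has a unique strong solution with paths in $\mathcal{C}_T$; in the S-model $\phi$ has a.s. continuous, hence a.s. bounded, paths on $[0,T]$, and the Ben Arous--Guionnet non-explosion estimate built on $k_U$ is unaffected by the addition of a bounded drift (equivalently, absolute continuity with respect to $P$ via Cameron--Martin/Girsanov, Theorems~\ref{theo:cameron-martin} and~\ref{theo:girsanov}, shows that the solution stays in $\mathcal{C}_T^A$).

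\emph{The two coordinates.} Set $m'(r):=\mathbb{E}[f(X_r^{m,\bar{K}})]$ and $K'(r,s):=\sigma^2\,\mathbb{E}[f(X_r^{m,\bar{K}})f(X_s^{m,\bar{K}})]$ for $r,s\in[0,t]$. The function $m'$ is measurable --- in fact continuous, by continuity of the paths, continuity of $f$ and dominated convergence --- and bounded: by $C$ in the H-model via \eqref{eq:sigbound}, and by $A$ in the S-model since $f=\mathrm{Id}$ and $|X_r|\le A$. Hence $m'\in L^\infty([0,t])\subset L^2([0,t])$ because $t\le T<\infty$. The kernel $K'$ is symmetric and positive semidefinite: for $\psi\in L^2([0,t])$, $\int_0^t\int_0^tK'(r,s)\psi(r)\psi(s)\,dr\,ds=\sigma^2\,\mathbb{E}\big[\big(\int_0^tf(X_r^{m,\bar{K}})\psi(r)\,dr\big)^2\big]\ge0$, the Fubini step being legitimate since $f(X_\cdot^{m,\bar{K}})$ is a.s. bounded and $\psi\in L^1([0,t])$. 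Its diagonal is integrable: $\int_0^t|K'(s,s)|\,ds=\sigma^2\int_0^t\mathbb{E}[f(X_s^{m,\bar{K}})^2]\,ds\le\sigma^2C^2t$ in the H-model, resp. $\sigma^2A^2t$ in the S-model. A symmetric positive semidefinite measurable kernel with integrable diagonal defines a Trace-Class operator (its norm \eqref{eq:tcnormdef} being exactly that integral), so the operator associated with $K'$ by \eqref{eq:barK} lies in $\mathcal{K}_t$. Therefore $\Lambda(m,\bar{K})\in L^2([0,t])\times\mathcal{K}_t$.

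\emph{Main obstacle.} Everything in the third paragraph is boundedness bookkeeping plus the elementary covariance structure of $K'$. The genuine work is in the second paragraph: establishing that the non-Markovian additive drift $\phi$ has finite energy --- which rests on the positivity of $\tilde{K}$ and the Trace-Class estimates of Lemma~\ref{eq:KKtildenorms} --- and checking that it does not destroy non-explosion of the S-model dynamics.
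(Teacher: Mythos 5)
Your proof is correct, and its third paragraph \emph{is} the paper's entire proof: the authors dispose of the lemma in one line, observing that $f$ is bounded for the H-model (by \eqref{eq:sigbound}) and $X^{m,\bar{K}}$ is bounded for the S-model, so the expectations in \eqref{eq:pG} are finite, the first coordinate is bounded hence in $L^2([0,t])$, and the second is a symmetric positive kernel with integrable diagonal, hence Trace-Class in the sense of \eqref{eq:tcnorm}--\eqref{eq:tcnormdef}. Your second paragraph --- checking that \eqref{eq:limitK} actually admits a unique non-exploding solution for an \emph{arbitrary} pair $(m,\bar{K})\in L^2([0,t])\times\mathcal{K}_t$, via the energy estimate on the additive drift $\phi$ --- has no counterpart in the paper, which takes the well-posedness of \eqref{eq:limitK} for granted; that step is genuinely needed for $\Lambda$ to be a well-defined map, and your treatment of it is sound. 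One caveat: to invoke the bound \eqref{eq:boundnormHb} on $H$ you need hypothesis \eqref{eq:hypkappa}, i.e. $\sup_r\int_0^r\tilde{K}^r(r,u)^2\,du\le M$; your chain of inequalities gives $\int_0^r\tilde{K}^r(r,u)^2\,du\le K(r,r)\,\Vert\bar{\tilde{K}}^r\Vert_{r,1}$, which is uniformly bounded for kernels in the range of $\Lambda$ (where $K(r,r)\le\sigma^2C^2$, resp. $\sigma^2A^2$) but not for a generic Trace-Class element of $\mathcal{K}_t$, whose diagonal is only integrable. The paper makes the same silent assumption when it applies \eqref{eq:boundnormHb} in the proof of Lemma~\ref{lem:contraction}, so this is a shared and easily repaired looseness (e.g. by restricting $\mathcal{K}_t$ to kernels with bounded diagonal, which is all the fixed-point argument ever uses) rather than a defect of your argument relative to the paper's.
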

\begin{proof}
Clear from the definition \eqref{eq:pG} since $f$ (respectively $X^{m,\bar{K}}$) is bounded for the H-model (respectively for the S-Model).
\end{proof} 
We use the metric induced by the norm \eqref{eq:norm}:
\begin{equation}\label{eq:dist}
 d_t((m_1,\bar{K}_1),(m_2,\bar{K}_2))^2 = \Vert m_1 - m_2 \Vert^2_{L^2([0,t])} + \Vert \bar{K}_1 - \bar{K}_2 \Vert^2_{t,2}
 \end{equation}
Observe that if $X$ is a solution to \eqref{eq:newdyn}, then the corresponding pair $(m_Q,\bar{K}_Q)$ is a fixed point of $\Lambda$ and conversely if $(m,\bar{K})$ is such a fixed point of $\Lambda$, \eqref{eq:limitK} defines a solution to \eqref{eq:newdyn} on $[0,T]$. Our problem is now translated into a fixed point problem for which we prove the following contraction lemma
\begin{lemma}\label{lem:contraction}
For $t \leq T$, \(\bar{K}_1, \bar{K}_2 \in \mathcal{K}_T, m_1,\,m_2 \in L^2([0,T])\), we have
\begin{equation}\label{eq:lemma}
d_t(\Lambda((m_1,\bar{K}_1)),\Lambda((m_2,\bar{K}_2)))^2 \leq c_T \int_0^t d_s((m_1,\bar{K}_1),(m_2,\bar{K}_2))^2\,ds.
\end{equation}
where $d_t$ is defined by \eqref{eq:dist} and $c_T$ is a constant depending only on $T$.
\end{lemma}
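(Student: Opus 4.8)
The plan is to estimate, for fixed $t\le T$, the two components of $d_t(\Lambda((m_1,\bar K_1)),\Lambda((m_2,\bar K_2)))$ separately, namely $\|\Ex[f(X^{m_1,\bar K_1})]-\Ex[f(X^{m_2,\bar K_2})]\|_{L^2([0,t])}$ and $\sigma^2\|\Ex[f(X^{m_1,\bar K_1}_\cdot)f(X^{m_1,\bar K_1}_\cdot)]-\Ex[f(X^{m_2,\bar K_2}_\cdot)f(X^{m_2,\bar K_2}_\cdot)]\|_{t,2}$, and in both cases reduce the bound to a control on $\Ex[\sup_{s\le t}|X^{m_1,\bar K_1}_s-X^{m_2,\bar K_2}_s|^2]$ via the Lipschitz (or boundedness-plus-Lipschitz) properties of $f$. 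So the first step is: assume $f$ is globally Lipschitz (this holds in both models — a bounded sigmoid with bounded derivative in the H-model, the identity in the S-model) and reduce \eqref{eq:lemma} to showing
\[
\Ex\Big[\sup_{s\le t}\big|X^{m_1,\bar K_1}_s-X^{m_2,\bar K_2}_s\big|^2\Big]\le c_T\int_0^t d_s((m_1,\bar K_1),(m_2,\bar K_2))^2\,ds .
\]

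The second step is to write the difference $D_s:=X^{m_1,\bar K_1}_s-X^{m_2,\bar K_2}_s$ using \eqref{eq:limitK}. The $W_t$ terms cancel, so $D_s$ is a sum of: (i) $\int_0^s (g(X^{m_1,\bar K_1}_u)-g(X^{m_2,\bar K_2}_u))\,du$, handled by the Lipschitz/one-sided-Lipschitz bound on $g=-\nabla U$ together with Grönwall; (ii) $J\int_0^s(m_1(u)-m_2(u))\,du$, bounded by $J^2 t\int_0^s|m_1(u)-m_2(u)|^2\,du\le J^2 T\int_0^t\|m_1-m_2\|_{L^2([0,u])}^2\,du$ by Cauchy–Schwarz; and (iii) the two ``noise'' terms $\int_0^s(\widetilde W^{(1)}_u-\widetilde W^{(2)}_u)\,du$ and $\int_0^s\int_0^u(H_1(u,v)\widetilde W^{(1)}_v-H_2(u,v)\widetilde W^{(2)}_v)\,dv\,du$, where $\widetilde W^{(i)}_u=\int_0^u\tilde K^u_i(u,r)\,dW_r$. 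For (iii) the key inputs are: the map $\bar K\mapsto\bar{\tilde K}$ is Lipschitz from $\mathcal K_t$ (in the Hilbert–Schmidt norm) to itself, using \eqref{eq:KQtildeopb}, Remark~\ref{rem:contraction} (the resolvent $({\rm Id}+\bar K)^{-1}$ is a contraction) and the resolvent identity $A^{-1}-B^{-1}=A^{-1}(B-A)B^{-1}$; and similarly $\kappa\mapsto\bar H$ is Lipschitz, from \eqref{eq:H}, the series \eqref{eq:HQt} and the uniform bound \eqref{eq:boundnormHb}–\eqref{eq:normineq} which makes all tails geometrically small and uniformly controlled on $[0,T]$. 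Combined with the Itô isometry $\Ex[|\widetilde W^{(1)}_u-\widetilde W^{(2)}_u|^2]=\int_0^u|\tilde K^u_1(u,r)-\tilde K^u_2(u,r)|^2\,dr\le\|\bar{\tilde K}^u_1-\bar{\tilde K}^u_2\|_{u,2}^2\lesssim\|\bar K_1-\bar K_2\|_{u,2}^2$, this produces $\Ex[\sup_{s\le t}|(\text{iii})|^2]\le c_T\int_0^t\|\bar K_1-\bar K_2\|_{s,2}^2\,ds$, after using Jensen/Cauchy–Schwarz to pull the time integrals outside the square and expectation, and crucially the observation that all the kernels involved are causal ($\tilde K^u(u,r)=0$ for $r>u$, $H(u,v)=0$ for $v<u$... wait, $v>u$), so these are genuine adapted-process integrals to which Itô's isometry and Doob's maximal inequality apply.

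The third step is to close the estimate: collecting (i)–(iii) gives
\[
\Ex\Big[\sup_{s\le t}|D_s|^2\Big]\le C\int_0^t \Ex\Big[\sup_{u\le s}|D_u|^2\Big]\,ds + c_T\int_0^t d_s((m_1,\bar K_1),(m_2,\bar K_2))^2\,ds,
\]
and Grönwall's lemma absorbs the first term into the constant, yielding the displayed bound; then the reduction in step one gives \eqref{eq:lemma}. (For the covariance component one writes $\Ex[f(X^1_t)f(X^1_s)]-\Ex[f(X^2_t)f(X^2_s)]$ as $\Ex[(f(X^1_t)-f(X^2_t))f(X^1_s)]+\Ex[f(X^2_t)(f(X^1_s)-f(X^2_s))]$, bounds the $f$-factors by $C$ in the H-model or by $A$ in the S-model, and integrates the square over $(s,t)\in[0,t]^2$; this is where the Hilbert–Schmidt norm in the definition of $d_t$, rather than trace-class, is what makes the computation go through.)

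The main obstacle is step two part (iii): one must show that the two linear-in-$W$ noise terms depend on $\bar K$ in a quantitatively Lipschitz way, and that the dependence, while involving the $t$-indexed operators $\bar{\tilde K}^t$ and $\bar H$ (which, as Remark~\ref{rem:Ktildetspecial} warns, are \emph{not} nested restrictions of one another), is still controlled uniformly in $t\le T$ by the single constant $c_T$. This is exactly where Lemma~\ref{eq:KKtildenorms} and the uniform bound \eqref{eq:boundnormHb} do the heavy lifting: they guarantee that the Neumann series for $({\rm Id}+\bar K^t)^{-1}$ and for $\bar H$ converge at a rate independent of $t$, so term-by-term Lipschitz estimates can be summed. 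Everything else — Grönwall, Cauchy–Schwarz, Itô isometry, Doob — is routine once this Lipschitz-in-$\bar K$ property of the resolvent kernels is in hand.
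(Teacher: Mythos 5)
Your proposal follows essentially the same route as the paper's proof: form the difference of the two solutions of \eqref{eq:limitK} (the Brownian terms cancel), kill the $g$-term by monotonicity, bound the $m$-term by Cauchy--Schwarz, control the noise terms via It\^o's isometry together with the Lipschitz dependence of $\bar{\tilde{K}}$ and $\bar{H}$ on $\bar{K}$ coming from the resolvent identity, the contraction property of $({\rm Id}+\bar{K})^{-1}$ and the uniform bounds \eqref{eq:boundnormHb} and \eqref{eq:normineq}, close with Gr\"onwall, and transfer the estimate to $(p,\bar{G})$ using the Lipschitz continuity and boundedness of $f$. The one point to insist on is that in the S-model $g(y)=-2y/(A^2-y^2)$ is \emph{not} globally Lipschitz on $(-A,A)$, so of the two options you list only the one-sided (monotone decreasing) bound is usable --- which is precisely how the paper discards that term.
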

\begin{proof}
Let $(m_1,\bar{K}_1)$ and $(m_2,\bar{K}_2)$ be two elements of $L^2([0,T]) \times \mathcal{K}_T$, $(p_1,\bar{G}_1)$ and  $(p_2,\bar{G}_2)$ their images by $\Lambda$, see \eqref{eq:pG}. In order to simplify the reading and w.l.o.g. we assume $J=\sigma=1$. Let  $(X^1_t)_{0 \leq t \leq T}$ and $(X^2_t)_{0 \leq t \leq T}$ be the solutions to the corresponding equations \eqref{eq:limitK}. We write
\begin{align}
\label{eq:work}X^1_t-X^2_t &= \int_0^t (g(X^1_s) - g(X^2_s))\,ds  +  \int_0^t (m_1(s) - m_2(s))\,ds\\
&  + \int_0^t \int_0^s (\tilde{K}^s_1(s,u) - \tilde{K}^s_2(s,u))\,dW_u ds\nonumber\\
 &  + \int_0^t \left(\int_0^s H_1(s,u) \int_0^u \tilde{K}_1^u(u,v) \, dW_v\,du - \int_0^s H_2(s,u) \int_0^u \tilde{K}_2^u(u,v) \, dW_v\,du\right)\,ds\nonumber
\end{align}
By \eqref{eq:pG} and Cauchy-Schwarz we write
\[
(p_1(t) - p_2(t))^2 = \left(\mathbb{E}[f(X^1_t) - f(X^2_t) ]\right)^2 \leq \mathbb{E}[(f(X^1_t) - f(X^2_t) )^2]\\
\]
Using the fact that $f$ is Lipschitz continuous we conclude that
\begin{equation}\label{eq:p1minusp2}
(p_1(t) - p_2(t))^2 \leq C \mathbb{E}[(X^1_t - X^2_t )^2]\
\end{equation}
for some positive constant $C$.

By \eqref{eq:pG} and Cauchy-Schwarz again we write
\begin{align*}
\left( G_1(t,s) - G_2(t,s) \right)^2 &= \left(\mathbb{E}[f(X^1_t)f(X^1_s) - f(X^2_t)f(X^2_s) ]\right)^2\\
& \leq \left(\mathbb{E}[|f(X^1_t)|\, | f(X^1_s) - f(X^2_s)| + | f(X^2_s) | \, | f(X^1_t) - f(X^2_t)|]\right)^2\\
& \leq \left(\mathbb{E}\left[ \sqrt{f(X^1_t)^2+f(X^2_s)^2} \sqrt{(f(X^1_s) - f(X^2_s))^2+(f(X^1_t) - f(X^2_t))^2} \right] \right)^2\\
& \leq \mathbb{E}\left[f(X^1_t)^2+f(X^2_s)^2\right]\times \mathbb{E}\left[(f(X^1_s) - f(X^2_s))^2+(f(X^1_t) - f(X^2_t))^2 \right]
\end{align*}
Using the facts that  $f$ is bounded and Lipschitz continuous for the H-model, or is the identity and $|X^1_t|$, $|X^2_s|$ are a.s. bounded by $A$ for the S-model we can write
\begin{equation}\label{eq:Lambdadif}
\left( G_1(t,s) - G_2(t,s) \right)^2 \leq C \mathbb{E}[(X^1_t - X^2_t)^2] + C 
\mathbb{E}[(X^1_s - X^2_s)^2] \\
\end{equation}
for some other positive constant $C$.

We thus have to upperbound $\mathbb{E}[(X^1_t - X^2_t)^2]$.

The  application of Ito's formula to $(X^1_t - X^2_t)^2$ yields
\begin{align*}
\mathbb{E}[(X^1_t - X^2_t)^2] =& 2 \mathbb{E}\left[ \int_0^t (X^1_s - X^2_s)(g(X^1_s) - g(X^2_s))\,ds \right]+ \\
			& 2 \mathbb{E}\left[ \int_0^t (m_1(s) - m_2(s))(X^1_s - X^2_s)\,ds \right] +\\
			& 2 \mathbb{E}\left[ \int_0^t (\widetilde{W}_1(s) - \widetilde{W}_2(s))\,ds \right]+ \\
			& 2 \mathbb{E}\left[ \int_0^t (X^1_s - X^2_s) \left(  \int_0^s (H_1(s, u)\widetilde{W}_1(s,u) -  H_2(s, u)\widetilde{W}_2(s,u))\,du \right)\,ds \right]
\end{align*}
The right hand side of this equation is the sum of the expectations of four terms. The third term being equal to
\[
\int_0^t \int_0^s (\tilde{K}^t_1(s,u) -  \tilde{K}^t_2(s,u)) dW_u \,ds,
\]
its expectation is equal to 0.

We consider next the first term $2 \mathbb{E}\left[ \int_0^t (X^1_s - X^2_s)(g(X^1_s) - g(X^2_s))\,ds \right]$.

\noindent
In the case of the H-Model, $g(x)=-\alpha x$,  $\alpha > 0$, and $g$ is a decreasing function. $g$ is also a decreasing function in the case of the S-model, as shown in Figure~\ref{fig:Ug}. Therefore in both cases $2 \mathbb{E}\left[ \int_0^t (X^1_s - X^2_s)(g(X^1_s) - g(X^2_s))\,ds \right]  \leq 0$.

We have obtained
\begin{align}\label{eq:e1}
\mathbb{E}[(X^1_t - X^2_t)^2] \leq
			& 2 \mathbb{E}\left[ \int_0^t (m_1(s) - m_2(s))(X^1_s - X^2_s)\,ds \right] +\\
		\nonumber	& 2 \mathbb{E}\left[ \int_0^t (X^1_s - X^2_s) \left(  \int_0^s (H_1(s, u)\widetilde{W}_1(s,u) -  H_2(s, u)\widetilde{W}_2(s,u))\,du \right)\,ds \right]
\end{align}

\noindent
{\bf Term} $2 \mathbb{E}\left[ \int_0^t (m_1(s) - m_2(s))(X^1_s - X^2_s)\,ds \right]$\ \\
By the inequality 
\begin{equation} \label{eq:inequalityb}
ab \leq \frac{1}{2}(a^2 + b^2)
\end{equation}
 true for any two real numbers $a$ and $b$ we obtain
\begin{multline}\label{eq:e2}
2 \mathbb{E}\left[ \int_0^t (m_1(s) - m_2(s))(X^1_s - X^2_s)\,ds \right] \\
\leq \int_0^t (m_1(s) - m_2(s))^2\,ds +  \int_0^t \mathbb{E}\left[ (X^1_s - X^2_s)^2 \right]\,ds \\= 
\Vert m_1 - m_2 \Vert_{L^2([0.t])}^2 +  \int_0^t \mathbb{E}\left[ (X^1_s - X^2_s)^2 \right]\,ds
\end{multline}

\noindent
{\bf Term} $2 \mathbb{E}\left[ \int_0^t (X^1_s - X^2_s) \left(  \int_0^s (H_1(s, u)\widetilde{W}_1(s,u) -  H_2(s, u)\widetilde{W}_2(s,u))\,du \right)\,ds \right]$\ \\
By the same inequality we have
\begin{multline*}
2 \mathbb{E}\left[ \int_0^t (X^1_s - X^2_s) \left(  \int_0^s (H_1(s, u)\widetilde{W}_1(s,u) -  H_2(s, u)\widetilde{W}_2(s,u))\,du \right)\,ds \right] \leq \\
\int_0^t \mathbb{E}\left[ (X^1_s - X^2_s)^2\right]\,ds  + \\
 \int_0^t \mathbb{E}\left[ \left(  \int_0^s (H_1(s, u)\widetilde{W}_1(s,u) -  H_2(s, u)\widetilde{W}_2(s,u))\,du \right)^2 \right]\, ds 
\end{multline*}
We proceed by upperbounding the second term in the right hand side of this inequality.

First note that
\begin{multline*}
H_1(s, u)\widetilde{W}_1(s,u) -  H_2(s, u)\widetilde{W}_2(s,u) = \\
H_1(s,u)\left( \int_0^u \tilde{K}_1^u(u,v)\,dW_v \right) -  H_2(s,u)\left( \int_0^u \tilde{K}_2^u(u,v)\,dW_v \right) = \\
H_1(s,u) \left( \int_0^u ( \tilde{K}_1^u(u,v) -  \tilde{K}_2^u(u,v) )\,dW_v \right) + (H_1(s,u) - H_2(s,u)) \left( \int_0^u \tilde{K}_2^u(u,v)\,dW_v \right),
\end{multline*}
so that, by \eqref{eq:inequalityb} and Cauchy-Schwarz,
\begin{multline*}
\left(   \int_0^s (H_1(s, u)\widetilde{W}_1(s,u) -  H_2(s, u)\widetilde{W}_2(s,u))\,du     \right)^2 \leq \\
2 \int_0^s H_1(s,u)^2\,du \times \int_0^s \left( \int_0^u ( \tilde{K}_1^u(u,v) -  \tilde{K}_2^u(u,v) )\,dW_v \right)^2 \,du  + \\
 2\int_0^s (H_1(s,u) - H_2(s,u))^2\,du \times \int_0^s\left( \int_0^u \tilde{K}_2^u(u,v)\,dW_v \right)^2 \,du .
\end{multline*}
We take the expected value of both sides of this inequality and apply twice Ito's isometry formula so that
\begin{multline*}
\mathbb{E}\left[ \left(   \int_0^s (H_1(s, u)\widetilde{W}_1(s,u) -  H_2(s, u)\widetilde{W}_2(s,u))\,du     \right)^2 \right] \leq \\
 2\int_0^s H_1(s,u)^2\,du \times \int_0^s   \int_0^u ( \tilde{K}_1^u(u,v) -  \tilde{K}_2^u(u,v) )^2\,dv  \,du + \\
 2\int_0^s (H_1(s,u) - H_2(s,u))^2\,du \times \int_0^s  \int_0^u \tilde{K}_2^u(u,v)^2\,dv  \,du
\end{multline*}
By \eqref{eq:boundnormHb} $(H_1)^2$ can be upperbounded by a constant depending only on $T$. Moreover by \eqref{eq:hsnormdef} $\int_0^s  \int_0^u \tilde{K}_2^u(u,v)^2\,dv  \,du \leq \| \bar{\tilde{K}}_2 \|_{t,2}^2$. By definition of $\bar{\tilde{K}}_2^t$, and using the fact that $({\rm Id} + \bar{K}_2^t)^{-1}$ is contracting (see Remark~\ref{rem:contraction}), we write
\[
 \Vert \bar{\tilde{K}}_2^t \Vert_{t,2} = \Vert \bar{K}_2^t \circ ({\rm Id} + \bar{K}_2^t)^{-1} \Vert_{t,2} \leq  \Vert \bar{K}_2^t \Vert_{t,2} \, \Vert ({\rm Id} + \bar{K}_2^t)^{-1} \Vert_{t, 2} \leq \Vert \bar{K}_2^t \Vert_{t,2} \leq C_T
\]
for some positive constant $C_T$ by \eqref{eq:tchs} and \eqref{eq:normineq}. To sum up, we have found that there exists a positive constant $C_T$ such that
\begin{multline}\label{eq:e3}
\int_0^t \mathbb{E}\left[ \left(   \int_0^s (H_1(s, u)\widetilde{W}_1(s,u) -  H_2(s, u)\widetilde{W}_2(s,u))\,du     \right)^2 \right]\,ds \leq \\
C_T \Bigg(  \int_0^t  \int_0^s   \int_0^u ( \tilde{K}_1^u(u,v) -  \tilde{K}_2^u(u,v) )^2\,dv  \,du \,ds + \\
 \int_0^t \int_0^s (H_1(s,u) - H_2(s,u))^2\,du \,ds \Bigg)
\end{multline}
The term $\int_0^s   \int_0^u ( \tilde{K}_1^u(u,v) -  \tilde{K}_2^u(u,v) )^2\,dv\,du$ is less than $\| \bar{\tilde{K}}_1^t- \bar{\tilde{K}}_2^t \|_{t,2}^2$ and
\begin{align*}
\bar{\tilde{K}}_1^t - \bar{\tilde{K}}_2^t =& \bar{K}_1^t \circ ({\rm Id} + \bar{K}_1^t )^{-1} - \bar{K}_2^t \circ ({\rm Id} + \bar{K}_2^t )^{-1}\\
								=& ({\rm Id} + \bar{K}_1^t )^{-1} \circ \bar{K}_1^t - \bar{K}_2^t \circ ({\rm Id} + \bar{K}_2^t )^{-1}\\
								=& ({\rm Id} + \bar{K}_1^t )^{-1} \circ \left(\bar{K}_1^t \circ ({\rm Id} + \bar{K}_2^t ) - ({\rm Id} + \bar{K}_1^t ) \circ \bar{K}_2^t\right) \circ ({\rm Id} + \bar{K}_2^t )^{-1} \\
								=& ({\rm Id} + \bar{K}_1^t )^{-1} \circ ( \bar{K}_1^t - \bar{K}_2^t ) \circ ({\rm Id} + \bar{K}_2^t )^{-1}.
\end{align*}
We take the norm of the two sides of this equality and, using again the fact that the operators $({\rm Id} + \bar{K}_1^t )^{-1}$ and $({\rm Id} + \bar{K}_2^t )^{-1}$ are contracting 
(Remark~\ref{rem:contraction}), we obtain
\begin{equation}\label{eq:K1tildeminusK2tilde}
\| \bar{\tilde{K}}_1^t - \bar{\tilde{K}}_2^t \|_{t,2} \leq \|  \bar{K}_1^t - \bar{K}_2^t  \|_{t,2},
\end{equation}
and therefore
\begin{equation}\label{eq:e4}
\int_0^t \int_0^s   \int_0^u ( \tilde{K}_1^u(u,v) -  \tilde{K}_2^u(u,v) )^2\,dv\,du \,ds \leq T \|  \bar{K}_1 - \bar{K}_2 \|_{t,2}^2
\end{equation}
Consider now the obvious inequality
\[
 \int_0^t  \int_0^s   ( H_1(s,u) - H_2(s,u) )^2 \,du  \,ds \leq C_T \Vert \bar{H}_1 - \bar{H}_2 \Vert^2_{t,2}.
\]
According to \eqref{eq:H} in the case $\kappa(t,s) = \tilde{K}^t(t,s)\mathbbm{1}_{t\geq s}$
\begin{align*}
\bar{H}_1 - \bar{H}_2 &= ({\rm Id} -\bar{\tilde{K}}_1^t)^{-1} -  ({\rm Id} -  \bar{\tilde{K}}_2^t)^{-1} \\
& =
 ({\rm Id} - \bar{\tilde{K}}_1^t)^{-1} \circ  (\bar{\tilde{K}}_1^t - \bar{\tilde{K}}_2^t) \circ ({\rm Id} -  \bar{\tilde{K}}_2^t)^{-1},
\end{align*}
so that we have
\begin{align*}
\Vert \bar{H}_1 - \bar{H}_2 \Vert_{t,2} & \leq  \Vert ({\rm Id} -  \bar{\tilde{K}}_1^t)^{-1} \Vert_{t,2}\, \Vert \bar{\tilde{K}}_1^t - \bar{\tilde{K}}_2^t \Vert_{t,2} \,
\Vert ({\rm Id} - \bar{\tilde{K}}_2^t)^{-1} \Vert_{t,2}\\
& = \Vert \bar{H}_1 + {\rm Id}\Vert_{t,2} \, \Vert \bar{\tilde{K}}_1^t - \bar{\tilde{K}}_2^t \Vert_{t,2} \,\Vert \bar{H}_2  + {\rm Id}\Vert_{t,2}\\
& \leq ( \Vert \bar{H}_1 \Vert_{t,2} + \Vert  {\rm Id}\Vert_{t,2} ) \Vert \bar{\tilde{K}}_1^t - \bar{\tilde{K}}_2^t \Vert_{t,2} ( \Vert \bar{H}_2 \Vert_{t,2} + \Vert  {\rm Id}\Vert_{t,2} )\\
& = ( \Vert \bar{H}_1 \Vert_{t,2} + 1 ) \Vert \bar{\tilde{K}}_1^t - \bar{\tilde{K}}_2^t \Vert_{t,2} ( \Vert \bar{H}_2 \Vert_{t,2} + 1 ).
\end{align*}
 Because of \eqref{eq:boundnormHb}  $\Vert \bar{H}_1 \Vert_{t,2}$ and $\Vert \bar{H}_2 \Vert_{t,2}$ are upper-bounded by a constant depending only on $T$ and thus
\[
\Vert \bar{H}_1 - \bar{H}_2 \Vert_{t,2} \leq C_T \Vert \bar{\tilde{K}}_1^t - \bar{\tilde{K}}_2^t \Vert_{t,2},
\]
and, by \eqref{eq:K1tildeminusK2tilde},
\begin{equation}\label{eq:fourth12}
\Vert \bar{H}_1 - \bar{H}_2 \Vert_{t,2} \leq C_T \Vert \bar{K}_1^t - \bar{K}_2^t \Vert_{t,2}
\end{equation}
for some positive constant $C_T$.
We have obtained
\begin{equation}\label{eq:e5}
\int_0^t  \int_0^s   ( H_1(s,u) - H_2(s,u) )^2 \,du  \,ds \leq C_T \Vert \bar{K}_1 - \bar{K}_2^t \Vert_{t,2}.
\end{equation}
Now by combining \eqref{eq:e1}-\eqref{eq:e5} we obtain
\[
\mathbb{E}[(X^1_t-X^2_t)^2] \leq C_T\left( \int_0^t \mathbb{E}[(X^1_s-X^2_s)^2]\,ds+\Vert m_1 - m_2 \Vert^2_{L^2([0,t])} + \Vert \bar{K}_1 - \bar{K_2} \Vert_{t,2}^2 \right),
\]
so that Gronwall's Lemma commands that
\begin{equation}\label{eq:almost}
\mathbb{E}[(X^1_t-X^2_t)^2] \leq C_T e^{T C_T} \left(  \Vert m_1 - m_2 \Vert^2_{L^2([0,t])} + \Vert \bar{K}_1 - \bar{K_2} \Vert_{t,2}^2   \right) = D_T d_t((m_1,\bar{K}_1),(m_2,\bar{K}_2))
\end{equation}
for some positive constant $D_T$.

To conclude we have
\[
d_t(\Lambda((m_1,\bar{K}_1)),\Lambda((m_2,\bar{K}_2)))^2 = \Vert p_1 - p_2 \Vert^2_{L^2([0,t]} + \Vert \bar{G}_1 - \bar{G}_2 \Vert^2_{t,2},
\]
and, by \eqref{eq:p1minusp2} and \eqref{eq:Lambdadif},
\begin{multline*}
d_t(\Lambda((m_1,\bar{K}_1)),\Lambda((m_2,\bar{K}_2)))^2 \\
\leq C_T \left( \int_0^t \mathbb{E}[(X^1_s-X^2_s)^2] ds+ \int_0^t \int_0^t \mathbb{E}[(X^1_u-X^2_u)^2] \,du\,ds    \right).
\end{multline*}
By \eqref{eq:almost}
\[
d_t(\Lambda((m_1,\bar{K}_1)),\Lambda((m_2,\bar{K}_2)))^2 \leq c_T \int_0^t d_s((m_1,\bar{K}_1),(m_2,\bar{K}_2))^2\,ds,
\]
for some positive constant $c_T$, which finishes the proof.
\end{proof}
We can now state the Theorem:
\begin{theorem}\label{theo:uniqueness}
The map $\Lambda$ has a unique fixed point equal to $(m_Q,\bar{K}_Q)$ in \eqref{eq:dynmean}.
\end{theorem}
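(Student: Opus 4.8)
The plan is a Picard-type fixed-point argument resting entirely on the contraction estimate of Lemma~\ref{lem:contraction}, exploiting that it comes in the integrated (Volterra) form $d_t(\Lambda(x),\Lambda(y))^2 \le c_T\int_0^t d_s(x,y)^2\,ds$ rather than as a plain contraction. First I would iterate this inequality. Since for $x=(m_1,\bar K_1)$, $y=(m_2,\bar K_2)$ the function $s\mapsto d_s(x,y)^2$ is nondecreasing (the $L^2$ norm over $[0,s]$ and the Hilbert--Schmidt norm over $[0,s]^2$ both increase with $s$), feeding the output of Lemma~\ref{lem:contraction} back into itself and using $d_s(x,y)^2\le d_t(x,y)^2$ inside the integral gives, by induction on $n$ and recalling that $\Lambda$ maps $L^2([0,T])\times\mathcal K_T$ into itself (Lemma~\ref{lem:wellposed}),
\[
d_t\big(\Lambda^{n}(x),\Lambda^{n}(y)\big)^2 \;\le\; \frac{(c_T t)^{n}}{n!}\, d_t(x,y)^2 \;\le\; \frac{(c_T T)^{n}}{n!}\, d_T(x,y)^2,\qquad 0<t\le T,\ n\ge 0.
\]
Because $(c_TT)^{n}/n!\to 0$, there is $n_0$ with $(c_TT)^{n_0}/n_0!<1$, so $\Lambda^{n_0}$ is a strict contraction of $(L^2([0,T])\times\mathcal K_T,\,d_T)$.

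Next I would invoke the classical corollary of Banach's theorem: if some iterate $\Lambda^{n_0}$ of a self-map of a complete metric space is a contraction, then $\Lambda$ has exactly one fixed point, and the Picard sequence $x_{k+1}=\Lambda(x_k)$ converges to it from any starting point $x_0$ (this last statement being precisely the constructive algorithm used in Section~\ref{sect:numerics}). The only point that needs care is that the relevant space is the set of trace-class operators metrized by the Hilbert--Schmidt distance: one must check that a Cauchy Picard sequence, whose limit \emph{a priori} exists only in the (complete) space of Hilbert--Schmidt operators on $L^2([0,T])$, actually has its limit in $\mathcal K_T$. For this one uses the uniform a priori bound of Lemma~\ref{eq:KKtildenorms} on $\|\bar K_Q^t\|_{t,1}$ together with lower semicontinuity of the trace norm under Hilbert--Schmidt convergence, and the fact that $\Lambda$ by construction outputs genuine correlation kernels $\sigma^2\mathbb E[f(X_t)f(X_s)]$; alternatively one simply appeals to the Polish structure of $L^2([0,t])\times\mathcal K_t$ asserted in Section~\ref{sect:uniqueness}. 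This completeness bookkeeping is the main (and only) obstacle — the contraction mechanism itself is already supplied by Lemma~\ref{lem:contraction}.

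Finally, to identify the unique fixed point with $(m_Q,\bar K_Q)$: by the observation preceding Lemma~\ref{lem:contraction}, a pair $(m,\bar K)$ is a fixed point of $\Lambda$ on $[0,T]$ if and only if \eqref{eq:limitK} defines a solution of \eqref{eq:newdyn}, equivalently of \eqref{eq:dynmean}; since the limit law $Q$ — whose existence is recalled in Section~\ref{sect:previous} and, in the non-Gaussian case, follows from \cite{dembo_universality_2021} — solves this system, the pair $(m_Q,\bar K_Q)$ is a fixed point of $\Lambda$, hence by uniqueness \emph{the} fixed point. (Equivalently, one may regard the limit of the Picard iterates as the definition of the candidate, in which case the argument above yields existence and uniqueness of $Q$ simultaneously and intrinsically, without appealing to the earlier existence results.)
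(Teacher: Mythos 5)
Your proposal is correct and follows essentially the same route as the paper: both iterate the integrated contraction estimate of Lemma~\ref{lem:contraction} to obtain the factorial decay $\frac{(c_T T)^n}{n!}$ and conclude existence and uniqueness of the fixed point by a Picard argument, the only cosmetic difference being that you deduce uniqueness from the fact that some iterate $\Lambda^{n_0}$ is a strict contraction, whereas the paper applies Gr\"onwall's Lemma directly to the distance between two putative fixed points. Your additional remark on verifying that the Cauchy limit remains in $\mathcal{K}_T$ is a legitimate point of care that the paper passes over silently.
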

\begin{proof}
The existence and uniqueness of the fixed point are classically obtained by iterating Lemma~\ref{lem:contraction} to obtain
\[
d_T(\Lambda^{k+1}((m,\bar{K})), \Lambda^k((m,\bar{K})))^2 \leq \frac{c_T^k T^k}{k!} d_T(\Lambda((m,\bar{K})), (m,\bar{K}))^2.
\]
Existence of a fixed point arises from the fact that $\Lambda^k((m,\bar{K}))$ is a Cauchy sequence, hence convergent. Uniqueness is proved by assuming the existence of two fixed points $(m_1^\infty,\bar{K}_1^\infty)$ and $(m_2^\infty,\bar{K}_2^\infty)$ which must satisfy
\[
d_t((m_1^\infty,\bar{K}_1^\infty),(m_2^\infty,\bar{K}_2^\infty))^2 \leq c_T \int_0^t d_s((m_1^\infty,\bar{K}_1^\infty),(m_2^\infty,\bar{K}_2^\infty))^2\,ds,
\]
and by Grönwall Lemma this commands that
\[
d_t((m_1^\infty,\bar{K}_1^\infty),(m_2^\infty,\bar{K}_2^\infty)) = 0 \quad 0 \leq t \leq T
\]
\end{proof}
This Lemma provides us with the following estimate of the rate of convergence to the fixed point $(m^{\infty},\bar{K}^{\infty})$:
\begin{corollary}
	Rate of convergence to the Fixed Point \(K^\infty\)
	\begin{align*}
	d_T((m^{\infty},\bar{K}^{\infty}),\Lambda^\ell((m,\bar{K})))^2 &\leq \sum_{j\geq 
	\ell}d_T(\Lambda^{j+1}((m,\bar{K})),\Lambda^{j}((m,\bar{K})))^2 \\
	&\simeq \dfrac{c_T^\ell T^\ell}{\ell !} d_T(\Lambda((m,\bar{K})),(m,\bar{K}))^2
	\end{align*}

\end{corollary}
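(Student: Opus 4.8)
The plan is to treat the Corollary as a direct consequence of two facts already contained in the proof of Theorem~\ref{theo:uniqueness}: that the Picard iterates $\Lambda^\ell((m,\bar K))$ converge, in the complete metric space $(L^2([0,t])\times\mathcal{K}_t,d_t)$, to the unique fixed point $(m^\infty,\bar K^\infty)$, and that consecutive iterates obey the super-exponential increment bound
\[
d_T(\Lambda^{j+1}((m,\bar K)),\Lambda^{j}((m,\bar K)))^2 \le \frac{c_T^{\,j}T^{\,j}}{j!}\,D_0^2, \qquad D_0^2:=d_T(\Lambda((m,\bar K)),(m,\bar K))^2 .
\]
First I would write the telescoping decomposition $(m^\infty,\bar K^\infty)-\Lambda^\ell((m,\bar K))=\sum_{j\ge\ell}\bigl(\Lambda^{j+1}((m,\bar K))-\Lambda^{j}((m,\bar K))\bigr)$, which is legitimate because the series converges in norm to the limit, and apply the triangle inequality for $d_T$. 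This controls the distance from the $\ell$-th iterate to the limit by the tail of the increment series, yielding the first displayed estimate of the Corollary.

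Second I would substitute the increment bound into that tail, giving
\[
\sum_{j\ge\ell} d_T(\Lambda^{j+1}((m,\bar K)),\Lambda^{j}((m,\bar K)))^2 \le D_0^2\sum_{j\ge\ell}\frac{(c_TT)^{\,j}}{j!} .
\]
Since $\sum_{j\ge0}(c_TT)^j/j!=e^{c_TT}<\infty$, the tail is finite and tends to $0$; this already establishes convergence of the iterates to $(m^\infty,\bar K^\infty)$ at a rate governed by that tail.

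The remaining and genuinely informative step is the asymptotic ``$\simeq$''. Here I would observe that the series $\sum_j(c_TT)^j/j!$ has super-exponentially small terms: the ratio of the $(j{+}1)$-th to the $j$-th term is $c_TT/(j+1)\to0$. Consequently the tail is asymptotically dominated by its leading term, $\sum_{j\ge\ell}(c_TT)^j/j!\sim (c_TT)^\ell/\ell!$ as $\ell\to\infty$, which is exactly $\tfrac{c_T^\ell T^\ell}{\ell!}D_0^2$ and proves the claimed equivalence.

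I expect the only delicate point to be the interpretation of the two relations, both of which concern the upper bound rather than the distance itself. The triangle inequality naturally produces the square of the tail sum, $\bigl(\sum_{j\ge\ell}a_j\bigr)^2$ with $a_j:=d_T(\Lambda^{j+1},\Lambda^j)$, whereas the first displayed line is written as the sum of squares $\sum_{j\ge\ell}a_j^2$; after inserting the increment bound, the former is controlled by $D_0^2\bigl(\sum_{j\ge\ell}\sqrt{(c_TT)^j/j!}\bigr)^2$ and the latter by $D_0^2\sum_{j\ge\ell}(c_TT)^j/j!$. Both upper bounds share the same leading asymptotic: since the ratios $c_TT/(j+1)$ and $\sqrt{c_TT/(j+1)}$ each tend to $0$, each tail is asymptotically its first term, so both expressions are $\sim\tfrac{c_T^\ell T^\ell}{\ell!}D_0^2$. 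I would therefore present the rigorous chain $d_T((m^\infty,\bar K^\infty),\Lambda^\ell)^2\le\bigl(\sum_{j\ge\ell}a_j\bigr)^2\simeq\tfrac{c_T^\ell T^\ell}{\ell!}D_0^2$, and note that the ``$\simeq$'' is the asymptotic equivalence of this upper bound to its dominant term, the rate being insensitive to which of the two tail expressions one adopts.
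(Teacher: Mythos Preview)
The paper states this Corollary without proof, presenting it as an immediate consequence of the iterated contraction estimate
\[
d_T(\Lambda^{k+1}((m,\bar K)),\Lambda^k((m,\bar K)))^2 \le \frac{c_T^kT^k}{k!}\,d_T(\Lambda((m,\bar K)),(m,\bar K))^2
\]
obtained in the proof of Theorem~\ref{theo:uniqueness}. Your argument is therefore exactly the intended one: telescope, apply the triangle inequality, insert the increment bound, and identify the dominant term of the tail.

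Your observation about the first displayed line is well taken and worth keeping: the triangle inequality yields $\bigl(\sum_{j\ge\ell}a_j\bigr)^2$, not $\sum_{j\ge\ell}a_j^2$, and the latter is in general \emph{smaller}, so the inequality as literally printed in the paper does not follow from the telescoping argument. Your resolution---that both upper bounds are asymptotically equivalent to their leading term $(c_TT)^\ell/\ell!$, since the ratio of consecutive terms tends to zero---is the right way to interpret the ``$\simeq$'' and is what the paper evidently has in mind. If you want a clean rigorous statement, write
\[
d_T((m^\infty,\bar K^\infty),\Lambda^\ell((m,\bar K)))^2 \le \Bigl(\sum_{j\ge\ell}a_j\Bigr)^2 \le D_0^2\Bigl(\sum_{j\ge\ell}\sqrt{(c_TT)^j/j!}\Bigr)^2 \sim \frac{c_T^\ell T^\ell}{\ell!}\,D_0^2,
\]
which is what the Corollary is meant to convey.
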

It also implies that $m_Q(t)$ is zero in the S-model case.
\begin{corollary}\label{cor:zeromean}
If $\mu_0$ is symmetric (which implies $\mathbb{E}[X_0]=0$), the S-model satisfies
\[
m_Q(t) = 0\quad  \forall\ 0 \leq t \leq T, \quad \forall J.
\]
\end{corollary}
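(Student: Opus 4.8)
The plan is to run a symmetry argument exploiting that, in the S-model, $f=\mathrm{Id}$ and $g=-\nabla U$ (with $U$ even, as noted in the footnote of Section~\ref{sect:previous}) are both odd, and $\mu_0$ is symmetric. First I would record the exact consequence of the hypothesis that I need: symmetry of $\mu_0$ gives that $-X_0$ has the same law as $X_0$ (in particular $\mathbb{E}[X_0]=0$), and since $W$ is a $Q$-Brownian motion independent of $X_0$, the pair $(-X_0,-W)$ has the same law as $(X_0,W)$.

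Second, I would fix $\bar{K}\in\mathcal{K}_T$ and $m\in L^2([0,T])$, take the process $X^{m,\bar{K}}$ solving \eqref{eq:limitK}, set $Y_t:=-X^{m,\bar{K}}_t$, and check that $Y$ solves \eqref{eq:limitK} \emph{with $m$ replaced by $-m$ and the same $\bar{K}$}, driven by $-W$ and started from $-X_0$. This is immediate on multiplying \eqref{eq:limitK} by $-1$: oddness of $g$ turns $-g(X^{m,\bar{K}}_s)$ into $g(Y_s)$; $\widetilde{W}$ and $H$ are determined by $\bar{K}$ alone, so under $W\mapsto -W$ one has $\widetilde{W}\mapsto-\widetilde{W}$ while $H$ is unchanged; and $-J\int_0^t m(s)\,ds = J\int_0^t(-m(s))\,ds$. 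Since \eqref{eq:limitK} is well posed (it has a unique weak solution — this uniqueness is what underlies, with the same driving $W$, the Gronwall estimate in the proof of Lemma~\ref{lem:contraction}), and since $(-X_0,-W)$ has the same law as $(X_0,W)$, I would conclude that $\mathrm{Law}(Y)=\mathrm{Law}(X^{-m,\bar{K}})$.

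Third, I would apply this to the fixed point $(m_Q,\bar{K}_Q)$ of $\Lambda$ furnished by Theorem~\ref{theo:uniqueness}, which by definition satisfies $m_Q(t)=\mathbb{E}[f(X^{m_Q,\bar{K}_Q}_t)]=\mathbb{E}[X^{m_Q,\bar{K}_Q}_t]$ and $K_Q(t,s)=\sigma^2\mathbb{E}[X^{m_Q,\bar{K}_Q}_tX^{m_Q,\bar{K}_Q}_s]$. Using $f=\mathrm{Id}$ and $\mathrm{Law}(Y)=\mathrm{Law}(X^{-m_Q,\bar{K}_Q})$, one gets $\mathbb{E}[f(X^{-m_Q,\bar{K}_Q}_t)]=\mathbb{E}[Y_t]=-m_Q(t)$ and $\sigma^2\mathbb{E}[f(X^{-m_Q,\bar{K}_Q}_t)f(X^{-m_Q,\bar{K}_Q}_s)]=\sigma^2\mathbb{E}[Y_tY_s]=K_Q(t,s)$, i.e. $\Lambda(-m_Q,\bar{K}_Q)=(-m_Q,\bar{K}_Q)$. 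Thus $(-m_Q,\bar{K}_Q)$ is also a fixed point of $\Lambda$, so by the uniqueness in Theorem~\ref{theo:uniqueness} it coincides with $(m_Q,\bar{K}_Q)$, forcing $m_Q\equiv -m_Q$, that is $m_Q(t)=0$ for all $t\in[0,T]$ and all $J$.

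I expect the only genuinely delicate point to be the identification $\mathrm{Law}(-X^{m,\bar{K}})=\mathrm{Law}(X^{-m,\bar{K}})$, which requires uniqueness in law for the SDE \eqref{eq:limitK}; this is exactly why the statement is confined to the S-model, where the solution stays in the compact interval $(-A,A)$ on which $g$ is well behaved and non-explosion holds (for the H-model $g$ is globally Lipschitz and the same argument would go through, but the corollary is not needed there). Everything else — the sign bookkeeping using oddness of $g$ and $f$ and the symmetry of $W$ and $\mu_0$ — is routine.
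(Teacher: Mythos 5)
Your argument is correct and is essentially the paper's own proof: both rest on the observation that negation maps a solution to a solution (using that $f=\mathrm{Id}$ and $g$ are odd and that $(-X_0,-W)$ has the same law as $(X_0,W)$) and then invoke the uniqueness of Theorem~\ref{theo:uniqueness}; you merely carry the symmetry explicitly through the fixed-point map $\Lambda$, where the paper argues directly that $-X$ solves \eqref{eq:dynmean}. One small correction to your closing remark: the restriction to the S-model comes from the oddness of $f=\mathrm{Id}$ (the H-model sigmoid $\tfrac{1}{2}(1+\tanh)$ is not odd), not from well-posedness considerations.
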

\begin{proof}
In the S-model case, the function \(f\) is the identity, the function \(g\) is defined by \eqref{eq:g}.
Using the symmetry of the Brownian motion and the  fact that \(f\) and \(g\) are odd,
we immediately deduce that for any process \(X_t\) solution of \eqref{eq:dynmean}, \(- X_t\) is 
also a solution. If in addition \(\mu_0\) is symmetric, the processes \(X_t\) and \(- X_t\) start with the same law.
By uniqueness of the solution, we deduce that for any \(t \geq 0\), the law of \(X_t\) and \(- X_t\) are equal.
As a consequence, we have \(\mathbb{E}[X_t]=0\).
\end{proof}
\section{Numerical simulations of the limit dynamics}\label{sect:numerics}
We have implemented the fixed point algorithm $\Lambda$ in the Julia language. 
We choose a time $T$, a time step $\Delta t$ and we estimate a realisation of  the solution on $[0,T]$
by discretising \eqref{eq:dynmean}. For simplicity again we only consider the case $\lambda=1$.

We note $\Delta \Phi_l$, $l=0,\cdots,L-1$,  $L=\lfloor \frac{T}{\Delta t} \rfloor$,  the difference $\Phi_{(l+1)\Delta t}-\Phi_{l \Delta t}$ for $\Phi \in \{W,\,B,\,X\}$.

At iteration $n \geq 1$ of the numerical approximation of $\Lambda$ we are given $m^{(n-1)}(l  \Delta t)$ and $\tilde{K}^{(n-1)}(l \Delta t, j \Delta s)$, $j \leq l=0,\cdots,L - 1$ , 
and we produce $P$ independent solutions of \eqref{eq:dynmean} noted $X^{n,p}_l$, $l=0,\cdots,L$, 
$p=1,\cdots,P$. These are obtained by drawing sequences of  independent Brownian increments $\Delta W_l^{n,p} \simeq \mathcal{N}(0,\Delta t)$, $l = 0,\cdots L-1$, $p=1,\cdots,P$, 
\(n \geq 1\).

Discretising \eqref{eq:dynmean} yields the equations
\begin{align}\label{eq:firstEM}
\Delta X_l^{n,p} &= g(X_l^{n,p}) \Delta t + J m^{(n-1)}(l \Delta t) \Delta t +  \Delta B_l^{n,p}\\
\nonumber \Delta B_l^{n,p} &= \Delta W_l^{n,p} + \Delta t \sum_{j=0}^{l-1} \tilde{K}^{l \Delta t, (n-1)}(l\Delta t,j\Delta t) \Delta B_j^{n,p} \quad l = 0,\cdots L-1,
\end{align}
One then computes
\begin{align*}
m^{(n)} (l \Delta t) & = \frac{1}{P} \sum_p f(X_l^{n,p}) \ l = 0,\cdots, L-1 \\
K^{(n)} (l \Delta t, j \Delta t) & = \frac{1}{P} \sum_p f(X_l^{n,p})f(X_j^{n,p})\ j, l = 0,\cdots, L-1.
\end{align*}
The iteration of the map $\Lambda$ produces a sequence of symmetric functions $K^{(n)}(t,s)$, $0 \leq s,\, t \leq T$ and $\tilde{K}^{t,\,(n)}(s,u)$, $0 \leq u \leq s \leq t$. 
In the discrete time setting these functions are approximated by the $L \times L$ symmetric matrices $K^{(n)}(l \Delta t, j \Delta t)$, $0 \leq j, l \leq L-1$ and the $L$ $i \times i$ symmetric matrices $\tilde{K}^{i \Delta t,\,(n)}(l \Delta t, j\Delta t)$ , $0 \leq j, l \leq i-1$, $i=1,\cdots,L$.

Equation \eqref{eq:KQtildeopb} indicates that the $i$th symmetric $i \times i$ matrix $\tilde{K}^{i\Delta t,\,(n)}$ is obtained from the $i \times i$ sub-matrix $K^{i \Delta t,\,(n)}$ of the $L \times L$ matrix $K^{(n)}$ by
\[
\tilde{K}^{i\Delta t,\,(n)} = K^{i\Delta t,\,(n)} \left( {\rm Id}_i +  \Delta t K^{i\Delta t,\,(n)} \right)^{-1} = \left( {\rm Id}_i + \Delta t  K^{i\Delta t,\,(n)} \right)^{-1} K^{i\Delta t,\,(n)},
\]
where ${\rm Id}_i$ is the $i \times i$ identity matrix.

We then use $m^{(n)}$ and $\tilde{K}^{(n)}$ in \eqref{eq:firstEM} to compute the next approximation of $\Lambda$.
\begin{remark}
Note that since in the second equation of \eqref{eq:firstEM} only the $l$th row of $K^{l\Delta t,\,(n-1)}$ is needed, we do not have to compute $\left( {\rm Id}_l +  \Delta t K^{l\Delta t,\,(n-1)} \right)^{-1}$ but we rather solve for $x$ the linear system
\[
\left( {\rm Id}_l +  \Delta t K^{l\Delta t,\,(n-1)} \right)x =  K^{l\Delta t,\,(n-1)} e_l,
\]
where $e_l$ is the $l$th vector of the standard basis of $\mathbb{R}^l$. This can be achieved very efficiently by, for example, the conjugate gradient algorithm.
\end{remark}
\subsection{Numerical simulation results: the H-model}
We consider the case where the function $f$ is given by
\[
f(x) = \frac{1}{2}(1 + \tanh(x))
\]
As shown in Figure~\ref{fig:sigmoid1} the minimum activity is 0, the maximum is 1.  This is the function used in most works in mathematical neuroscience.
\begin{figure}[htb]
\centerline{
\includegraphics[width=0.5\textwidth]{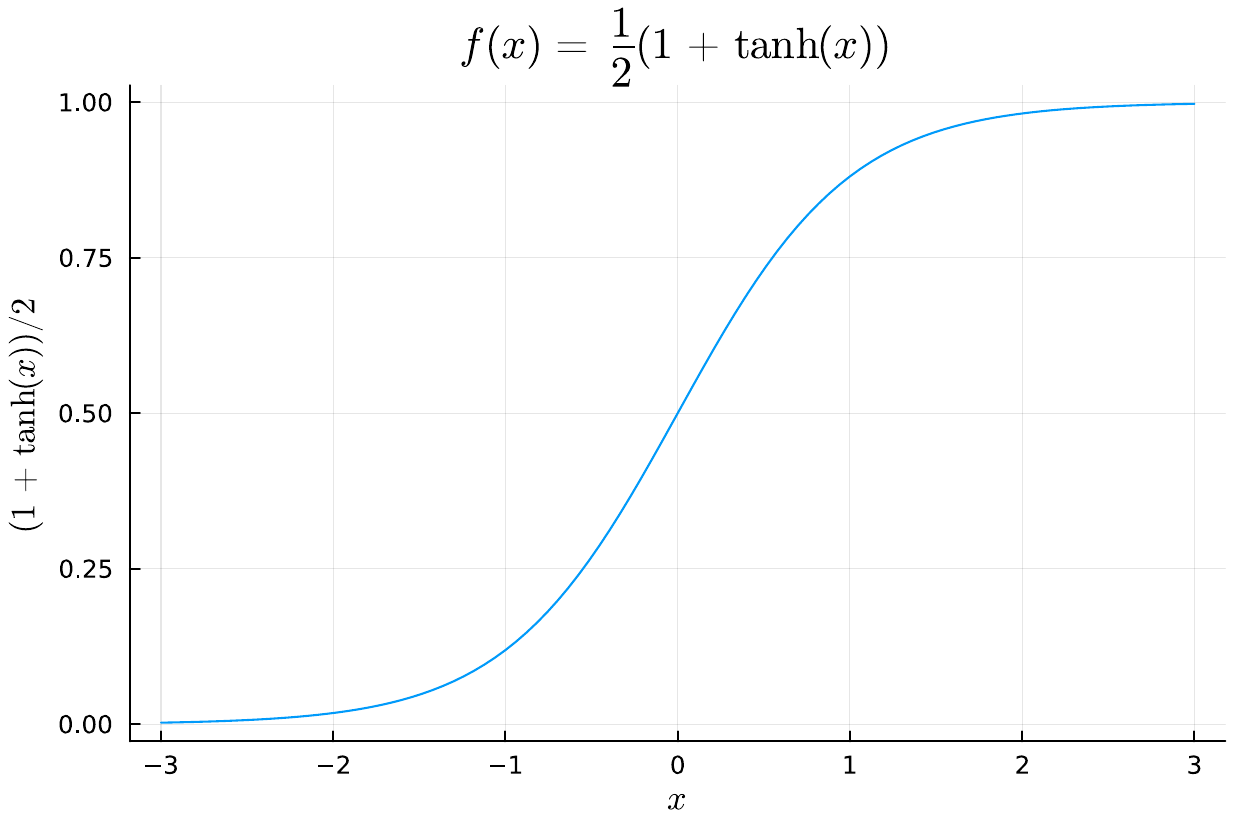}
}
\caption{The sigmoid function used in the numerical experiments for the H-model. Note that it varies between 0 and 1.}
\label{fig:sigmoid1}
\end{figure}
We show some results of the estimations produced by the iterative method $\Lambda$ as a set of composite figures.
Each figure displays, on the first row, $K_Q(t,s)$, $\tilde{K}_Q^t(t,s)$, and the covariance of the solution $X_t$, $\cov(X_t, X_s)$ , on the second row, the functions $K_Q(t,t)$, $\tilde{K}_Q^t(t,t)$, and $\cov(X_t,X_t))$; on the third row we display the function $m_Q(t)$, and two "typical" trajectories $X^i_t$ and $X^j_t$ as well as the corresponding activities $f(X^i_t)$ and $f(X^j_t)$ for various values of $J$ and $\sigma$. 
Together with the estimation of $K_Q$ and $m_Q$ we obtain an estimation of the standard deviation of these estimates.

We start with an example where $J=0.0$. Figure~\ref{fig:1} shows the results as an array of plots. 
The function $m_Q(t)$ is identically 0 by construction and thus not shown. 
The plot of $K_Q(t,t)$ in this figure shows three curves, $K_Q(t,t)$ in blue which falls between a red and a green curve. The red curve is $K_Q(t,t)$ plus twice the estimated standard deviation while the green is $K_Q(t,t)$ minus twice the estimated standard deviation. Both figures are for $\Delta t = 0.04$, the left one is for 100 000 trajectories  while the right one is for 500 000 trajectories, yielding a more accurate estimation. In both cases we have iterated $\Lambda$ ten times.
\begin{figure}[htb]
\centerline{
\includegraphics[width=0.5\textwidth]{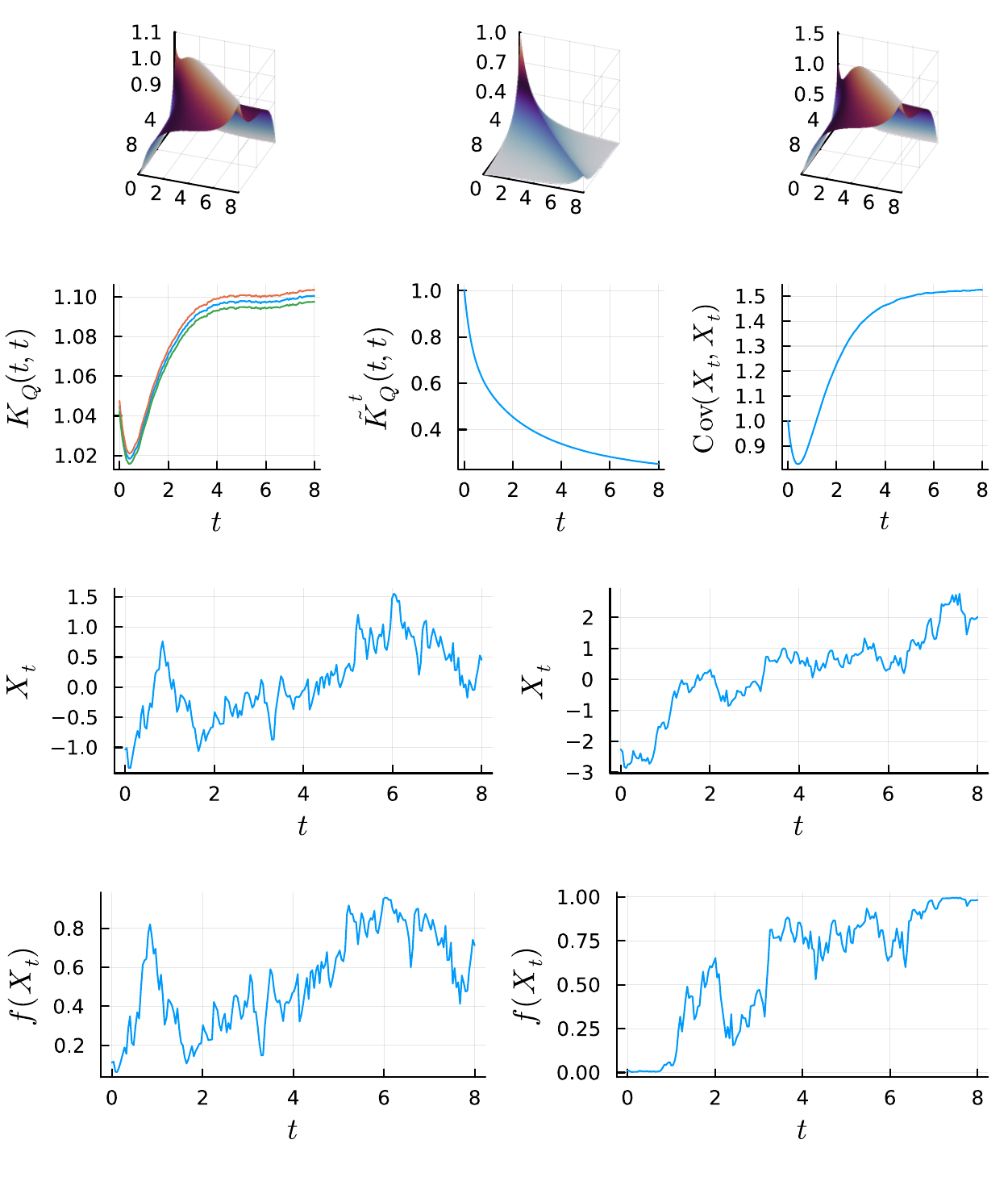}
\hspace{0.5cm}
\includegraphics[width=0.5\textwidth]{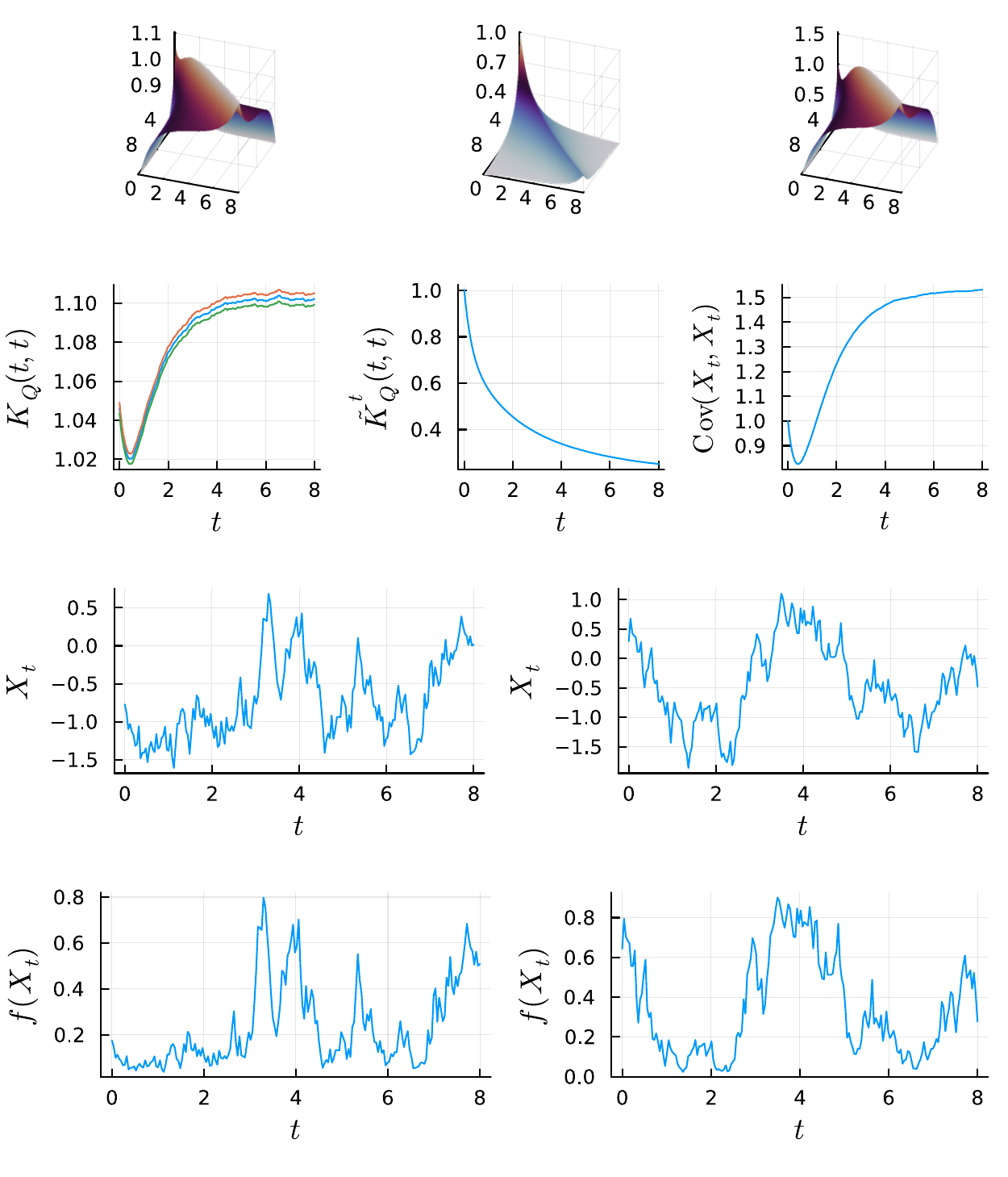}
}
\caption{H-model: $J=0,\,\sigma = 1$. The computation is performed on the time interval $[0,\,8.0]$ with $\Delta t=0.04$. The Monte Carlo method for estimating the fixed point of $\Lambda$ uses 100 000 trajectories (Left) and 500 000 trajectories (Right). $\Lambda$ is iterated 10 times. Note the improvement in the accuracy of the estimation of $K_Q$. }
\label{fig:1}
\end{figure}

We now increase the excitation level and consider the case $J=1$, shown in figure~\ref{fig:2}. The function $m_Q(t)$ is non zero and converges when time increases to a constant value slightly below 0.7 and acts as a constant injected current resulting in increased values of the potential values $X_t$ and activity values $f(X_t)$. 

We next double the excitation level and go to $J=2$. The function $m_Q$ then saturates at 1.8 resulting in a further increase in the activity level shown as $f(X_t)$, its values getting much more often very close to  the maximum value of 1. If we increase the excitation level to $J=5$ we note that, as expected, the function $m_Q(t)$ saturates at an even higher value, close to 5, resulting in activity levels very close to the maximum value of 1.
\begin{figure}[htb]
\centerline{
\includegraphics[width=0.5\textwidth]{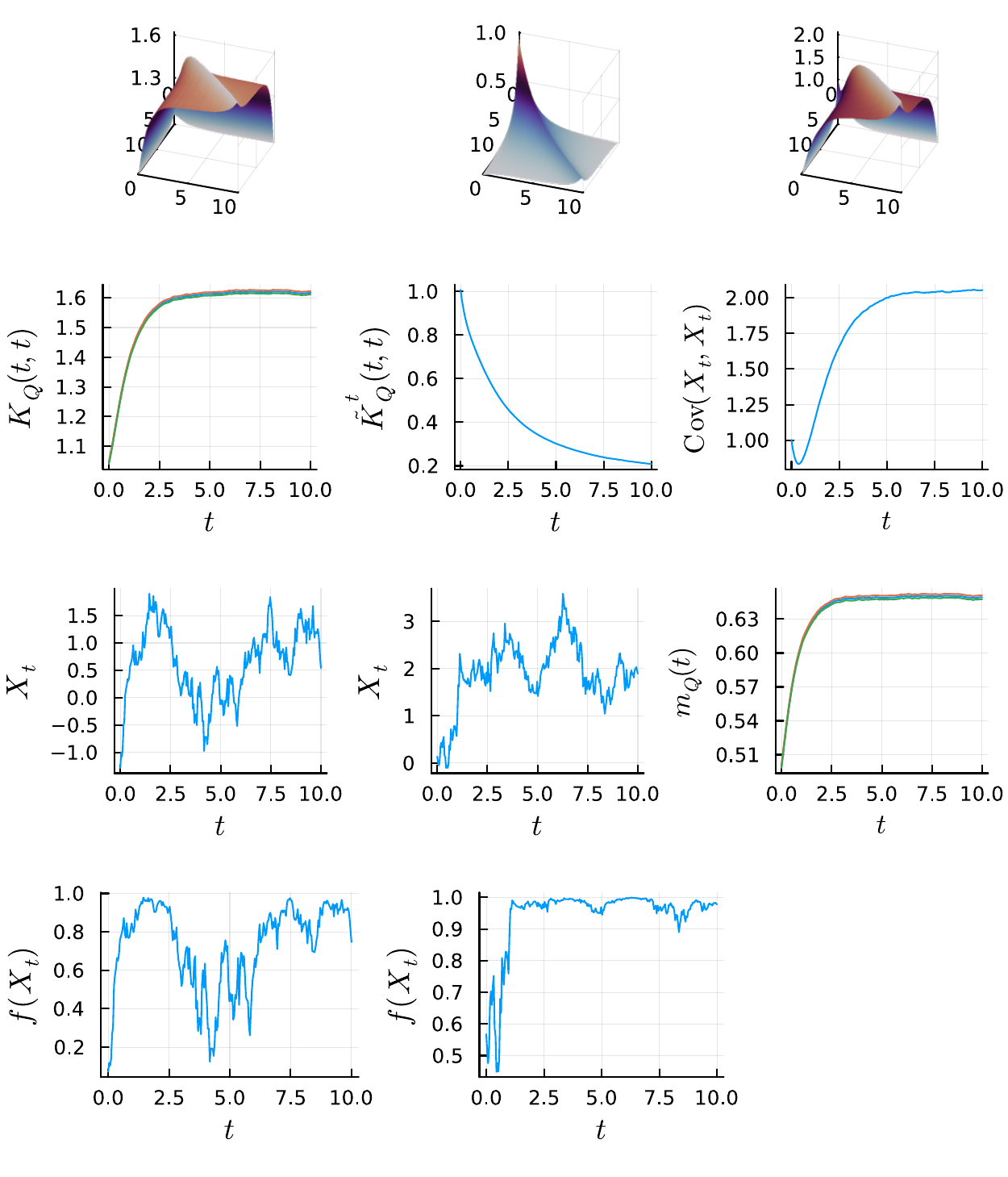}\hspace{0.1cm } 
\includegraphics[width=0.5\textwidth]{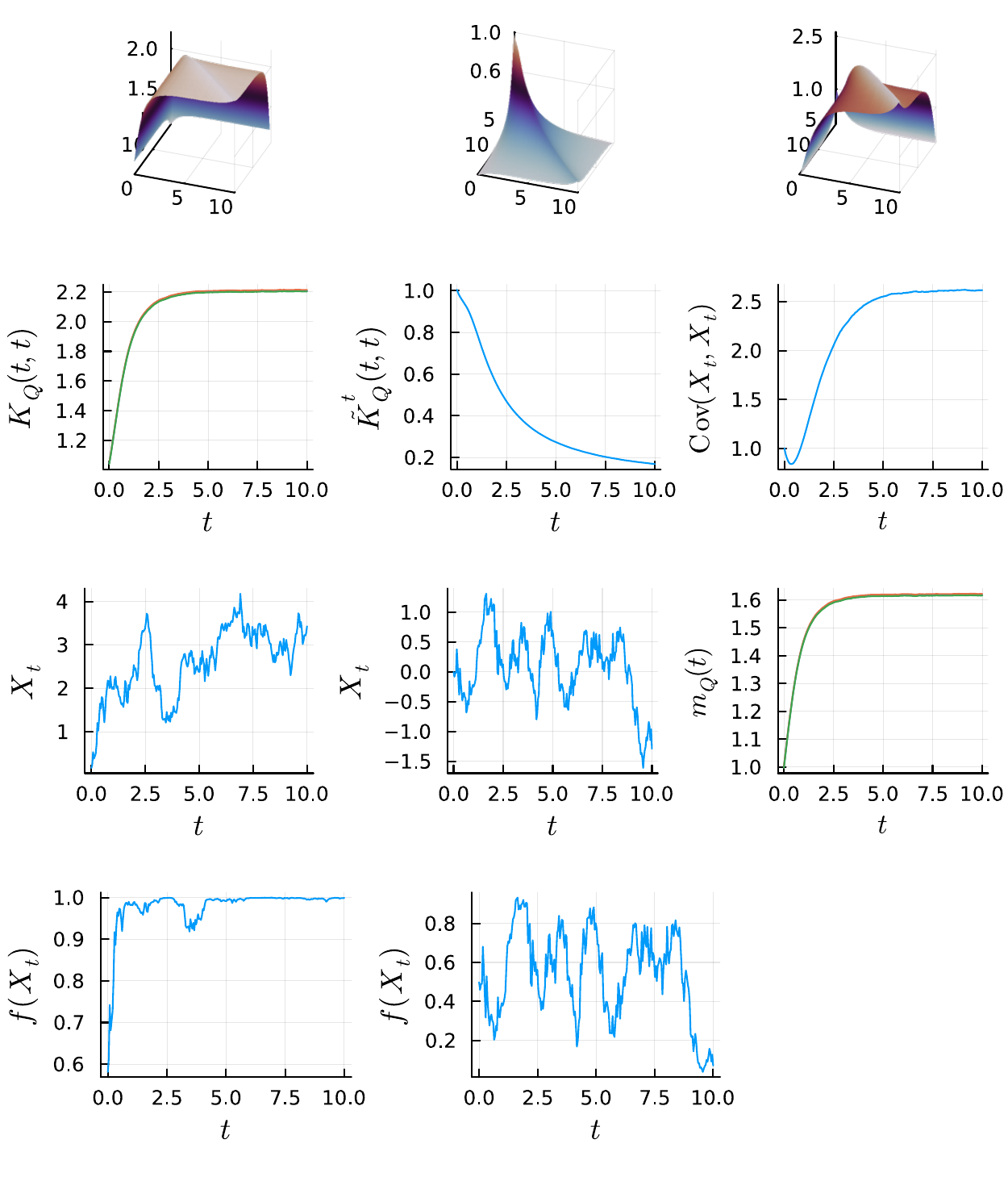}
}
\centerline{
\includegraphics[width=0.5\textwidth]{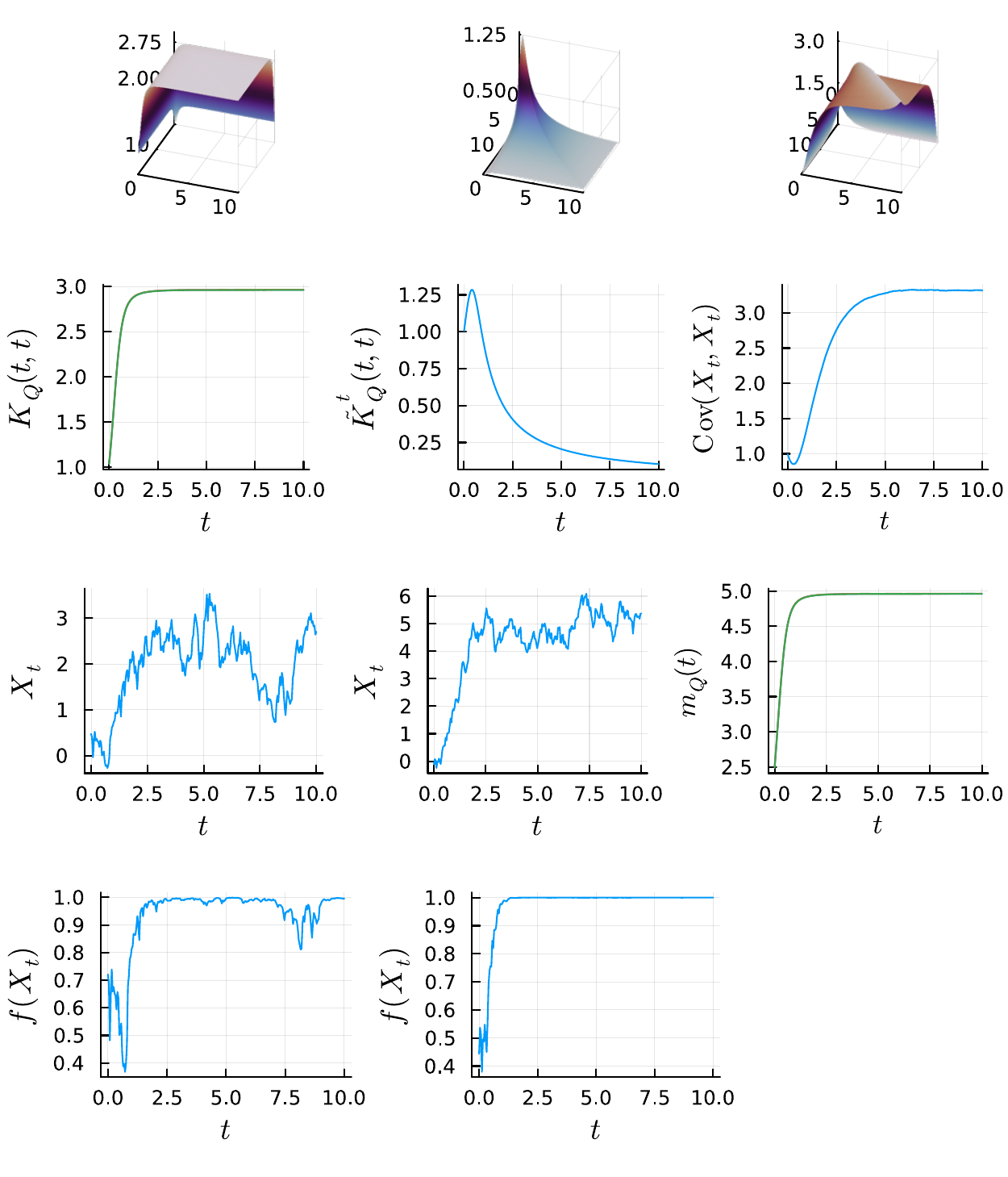}}
\caption{H-model: From left to right and top to bottom, $J=1,\,2,\,5$ and $\sigma = 1$. The computation is performed on the time interval $[0,\,10.0]$ with $\Delta t=0.04$. The Monte Carlo method for the fixed point computation uses 160 000 trajectories.}
\label{fig:2}
\end{figure}

We now turn to negative values of $J$. Figure~\ref{fig:3} has a similar structure to~\ref{fig:2}. Decreasing the values of $J$ from -1 to -2.5 and to -5 decreases the injected current $m_Q(t)$ resulting in a decrease of the neuronal acivity $f(X_t)$.
\begin{figure}[htb]
\centerline{
\includegraphics[width=0.5\textwidth]{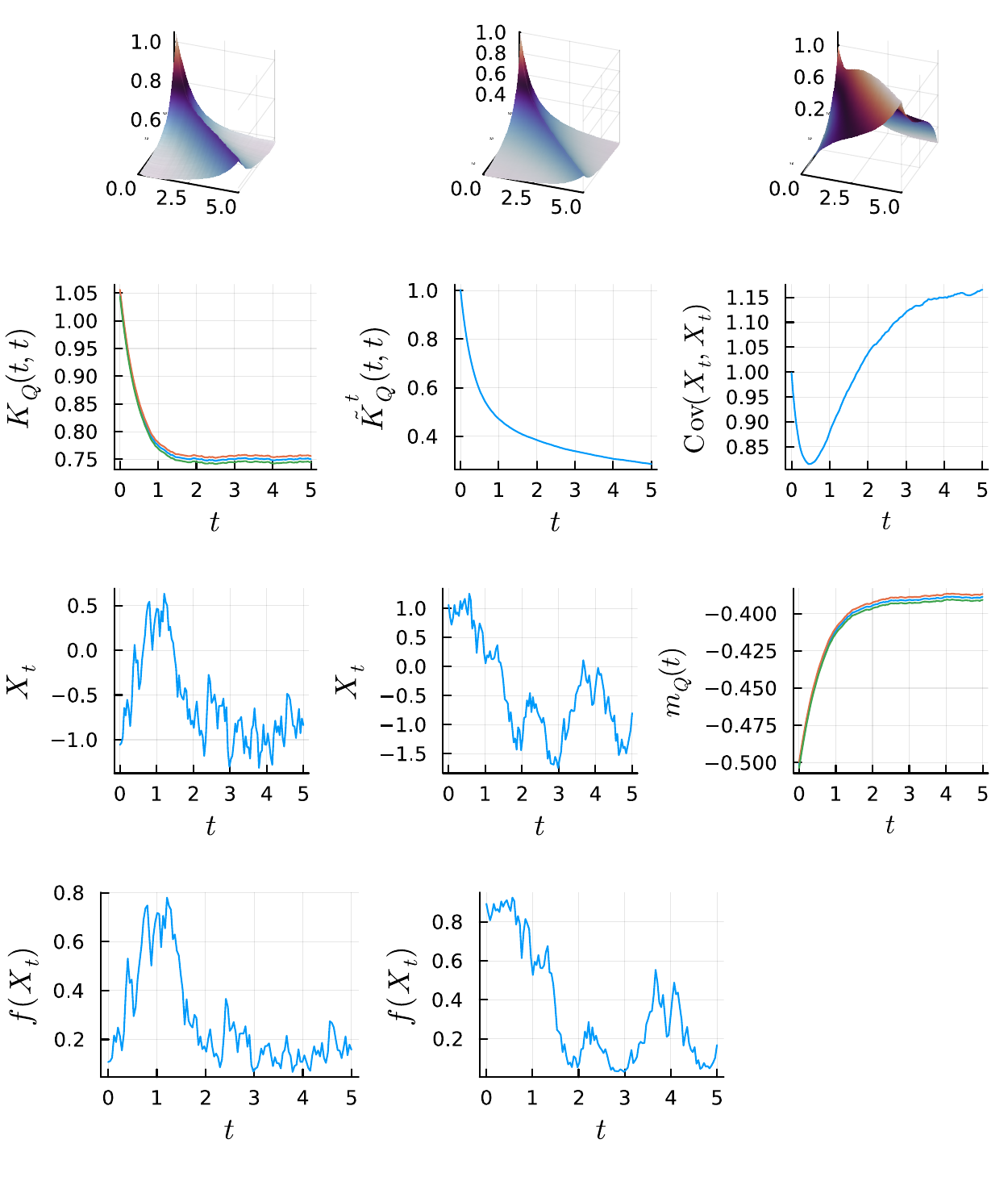}\hspace{0.3cm }
\includegraphics[width=0.5\textwidth]{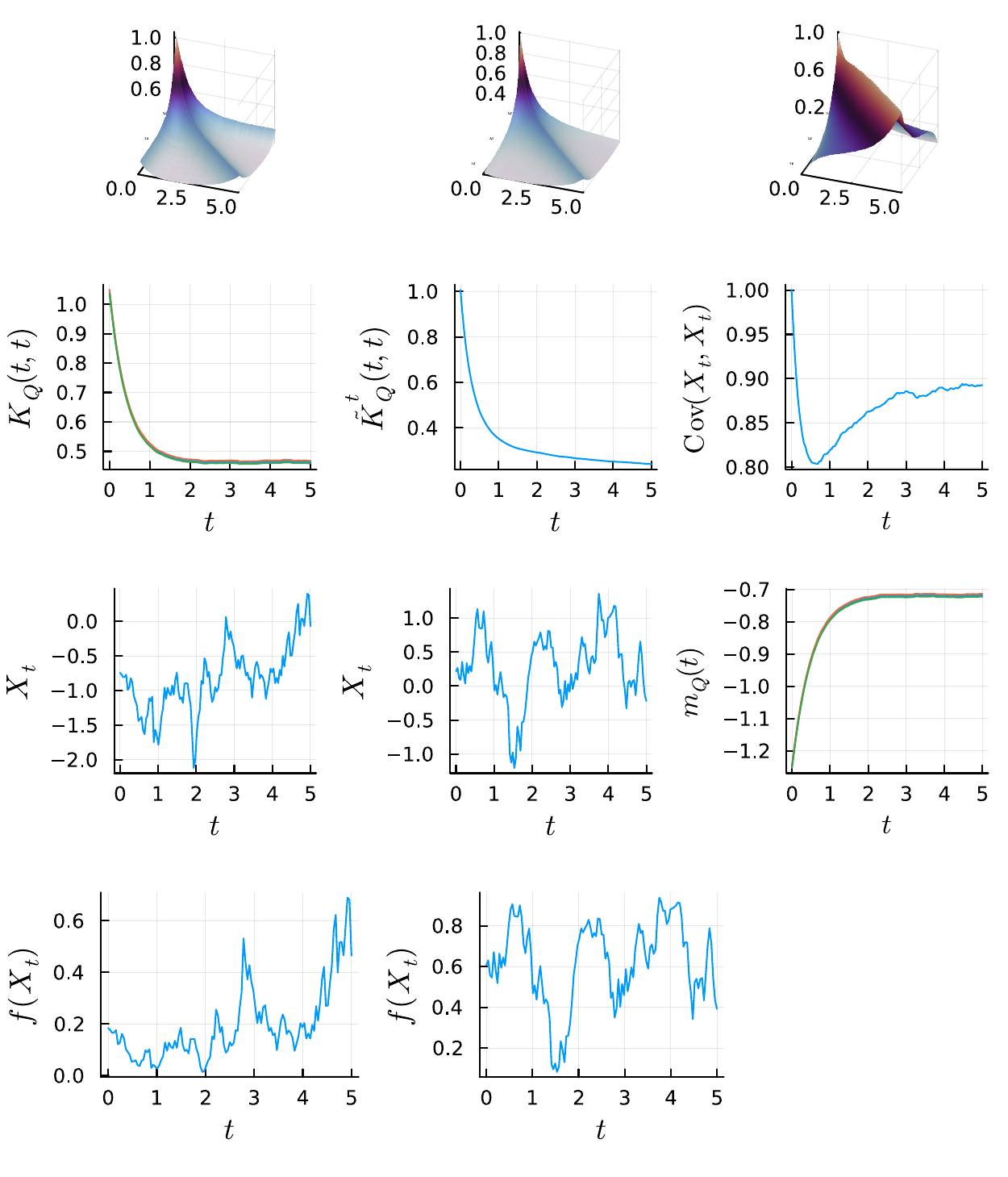}
}
\centerline{
\includegraphics[width=0.5\textwidth]{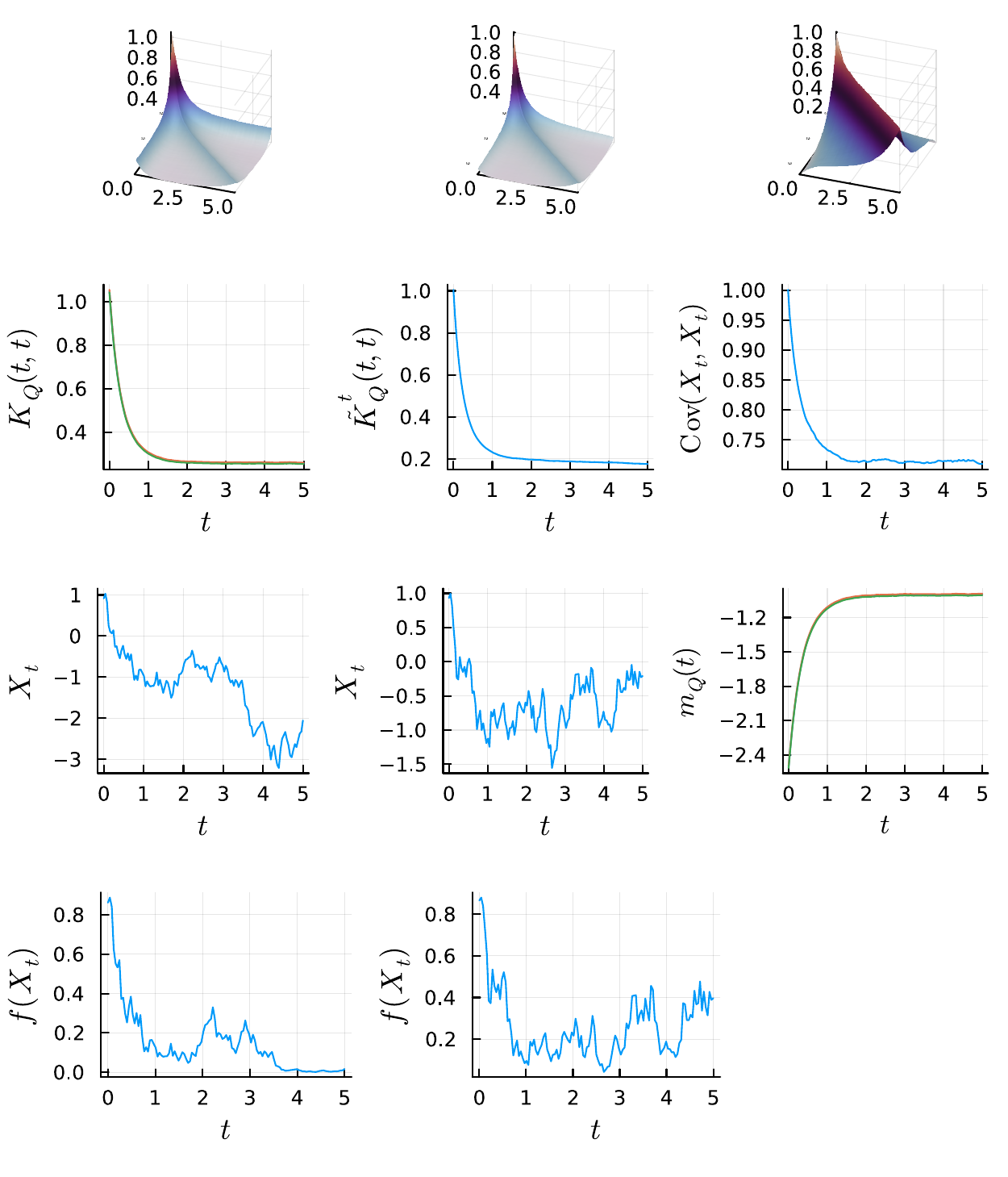}}
\caption{H-model: From left to right and top to bottom, $J=-1,\,-2.5,\,-5$ and $\sigma = 1$. The computation is performed on the time interval $[0,\,5.0]$ with $\Delta t=0.04$. The Monte Carlo method for the fixed point computation uses 100 000 trajectories.}
\label{fig:3}
\end{figure}

\subsection{Numerical simulation results: the S-model}
We next show some results of the estimations produced by the iterative method $\Lambda$ for the S-model, also as a set of composite figures. Since we know from Corollary~\ref{cor:zeromean} that $m_Q(t)$ is zero if the initial condition is zero mean, we do not show it. Moreover, changing the value of $J$ has no effect on $K_Q$ hence we only show the case $J=0$.
Each figure displays, on the first row, from left to right two 
”typical” trajectories $X^i(t)$ and $X^j(t)$.  
The second row shows $K_Q(t, s)$ and $\tilde{K}_Q^t (t, s)$. Note that for the S-model $K_Q(t, s)$ is the correlation 
$\mathbb{E}_Q(X_t X_s)$ and thus we do not show
$\cov(X_t, X_s)$ in this case. The third row shows  the two functions $K_Q(t, t)$ and $\tilde{K}_Q^t (t, t)$.

As a general comment, this case is more difficult numerically than the H-model: 
If $\Delta t$ is chosen too large, some of the generated trajectories exit the interval $]-A\ A[$. To remedy this problem, following Ethier and Kurtz, \cite{ethier_markov_2009} we change the definition \eqref{eq:U} to 
\[
U(x) = -k\log(A^2 - x^2),
\]
where in our work $k=2,\,4$. This change has the effect of having much less exiting trajectories. 

Figure~\ref{fig:6} shows the results when $\sigma=\lambda=1$ and the initial condition has 0 mean, hence $m_Q(t)=0$ (left part of the figure), or when the initial condition has mean 0.5 (right part of the figure where we 
observe that  $m_Q(t)$ decreases to 0).
\begin{figure}[htb]
\centerline{
\includegraphics[width=0.5\textwidth]{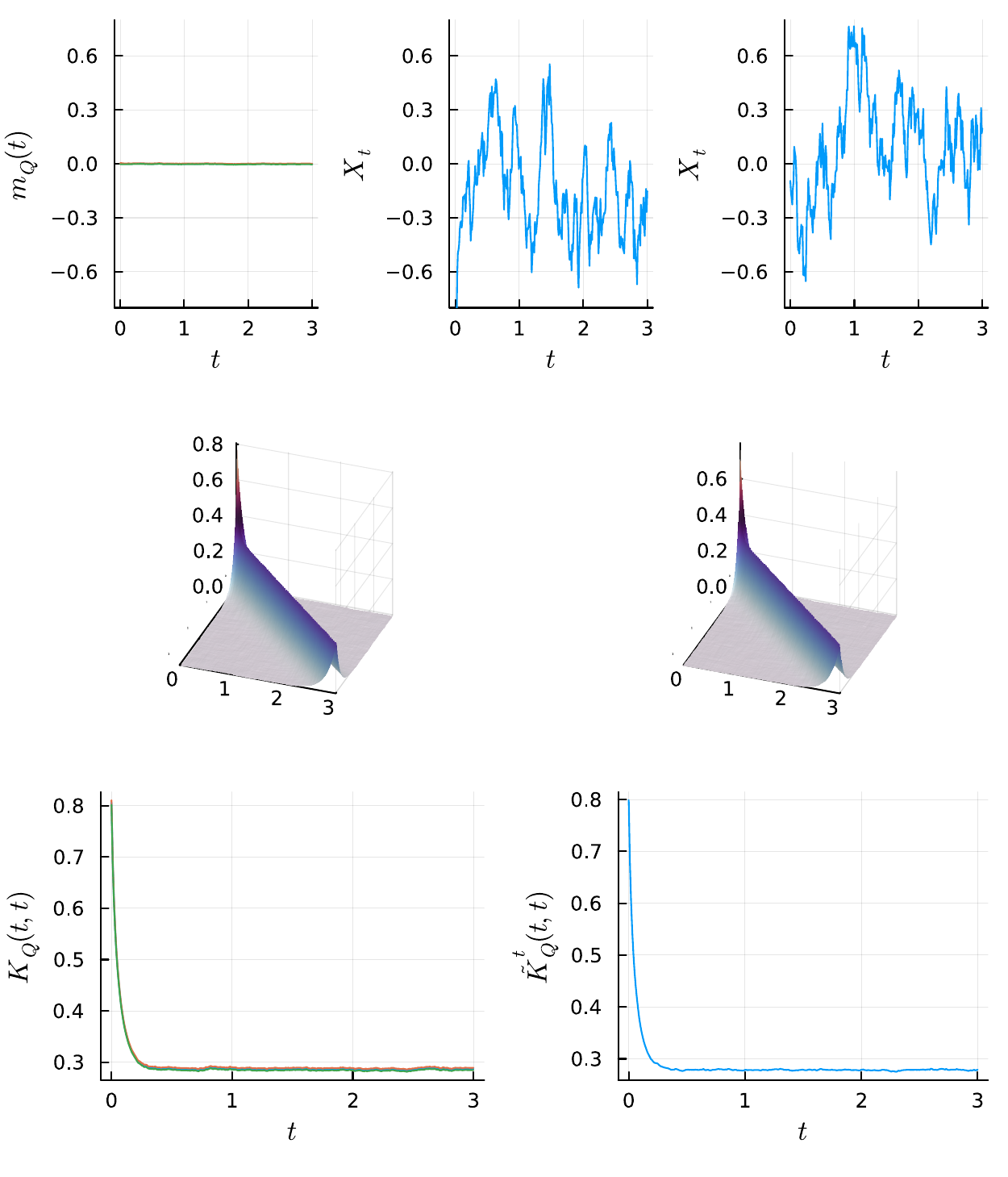} \hspace{0.1cm}
\includegraphics[width=0.5\textwidth]{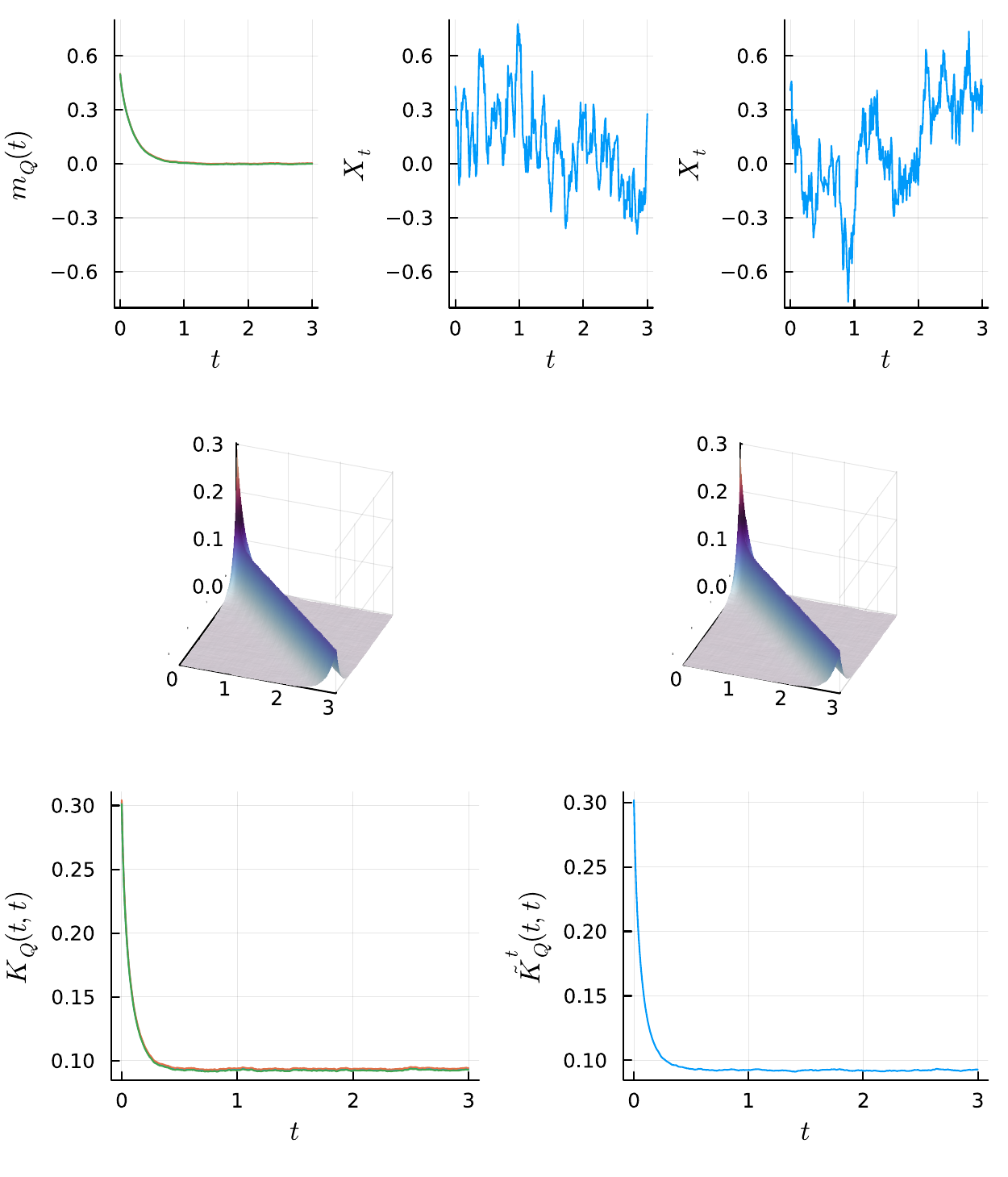}
}
\caption{S-model: $\sigma = 1$. Left: zero mean initial condition. $m_Q(t)=0$ at all times. Right: initial condition has mean 0.5. and $J=1$. $m_Q(t)$ decreases to 0. The computation is performed on the time interval $[0,\,3.0]$ with $\Delta t=0.01$. The Monte Carlo method for the fixed point computation uses 100 000 trajectories.
}
\label{fig:6}
\end{figure}

Figure~\ref{fig:7} shows the results when $\sigma=2.0$ and the initial condition has 0 mean, hence $m_Q(t)=0$ (left part of the figure), or when the initial condition has mean 0.5 (right part of the figure and $J=1$). We 
observe again that  $m_Q(t)$ decreases to 0. The bottom image shows the same case with a zero-mean initial condition. The estimation of the mean which we know to be equal to 0 is quite noisy.
\begin{figure}[htb]
\centerline{
\includegraphics[width=0.5\textwidth]{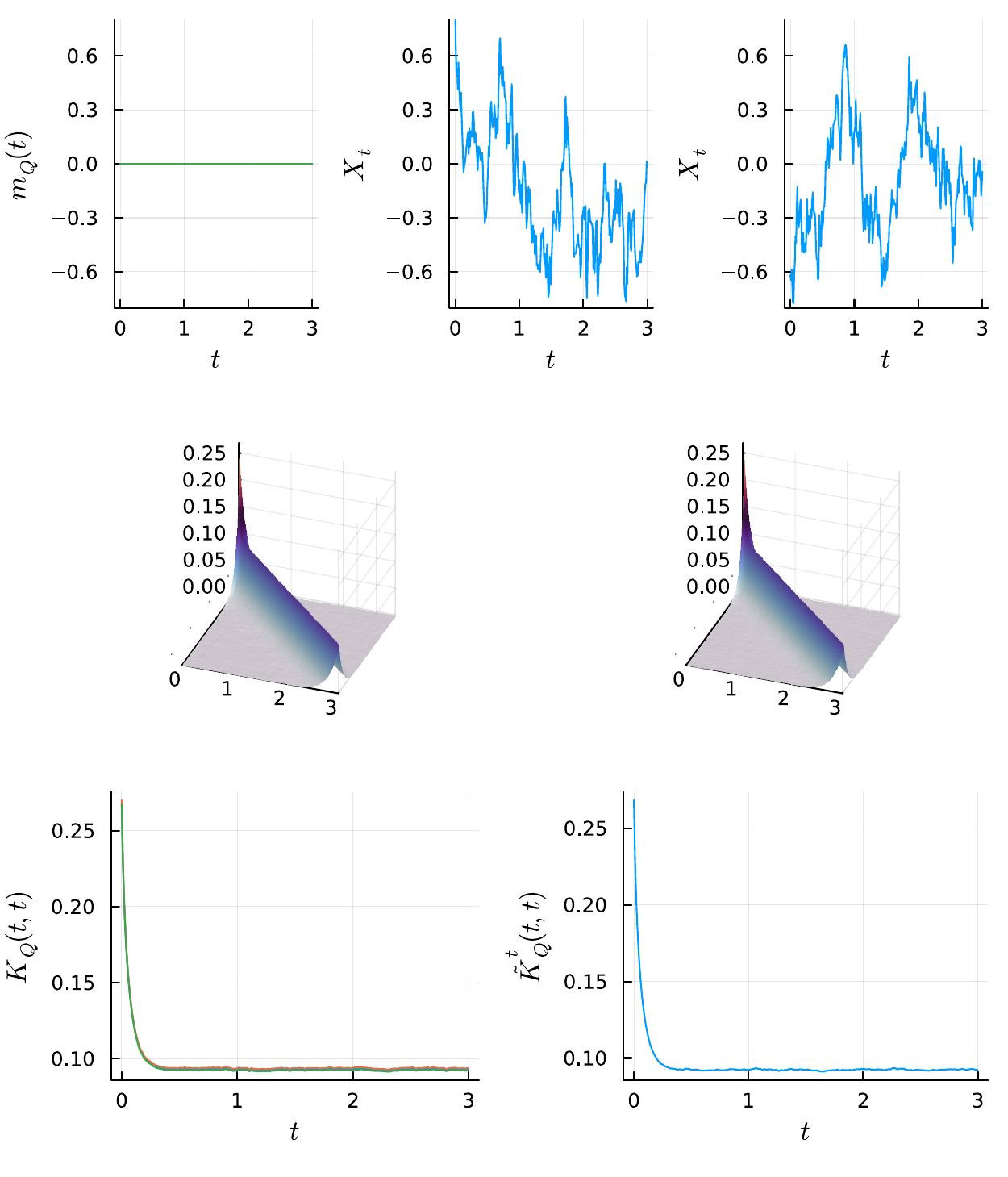} \hspace{0.1cm}
\includegraphics[width=0.5\textwidth]{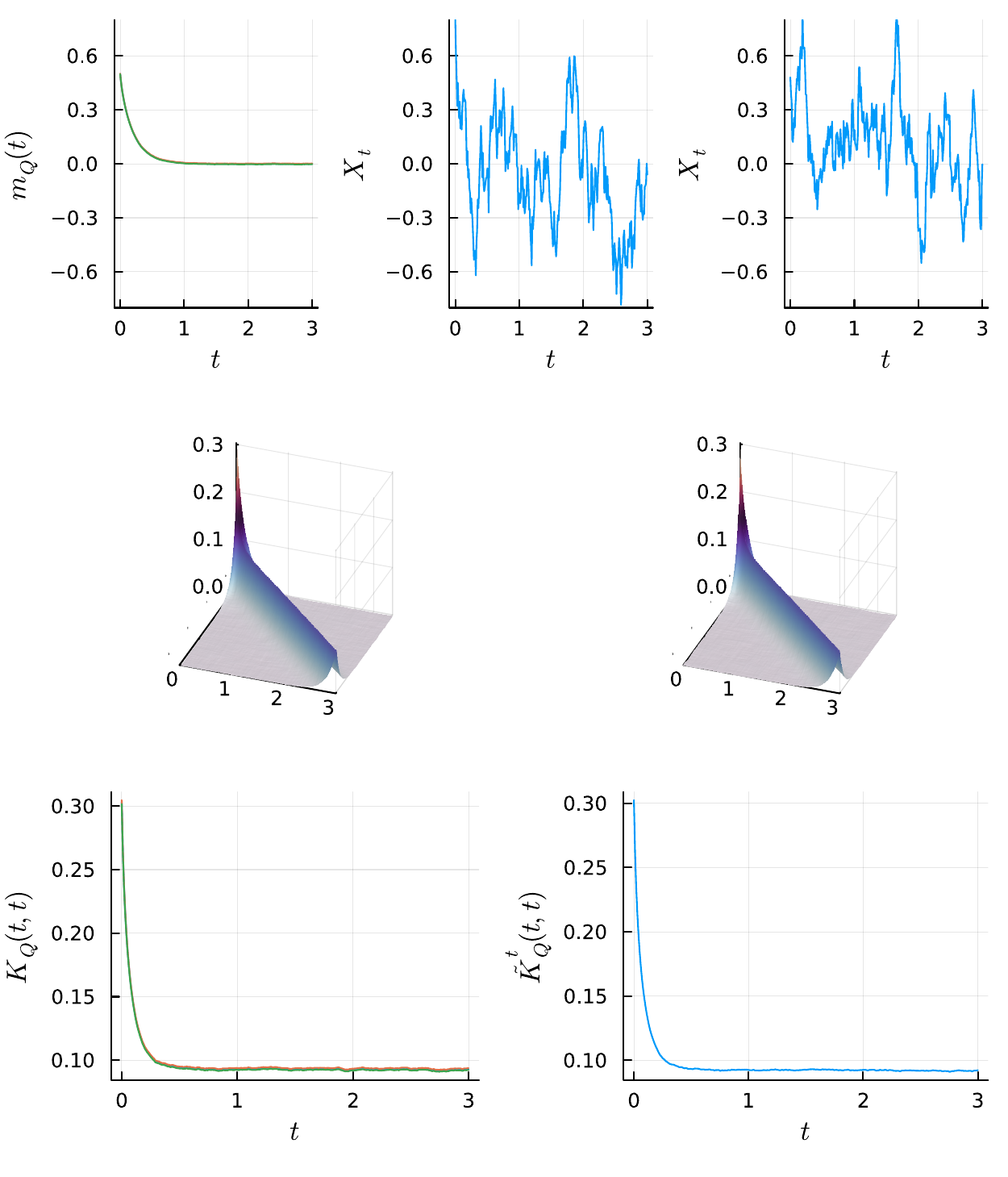}
}
\centerline{
\includegraphics[width=0.5\textwidth]{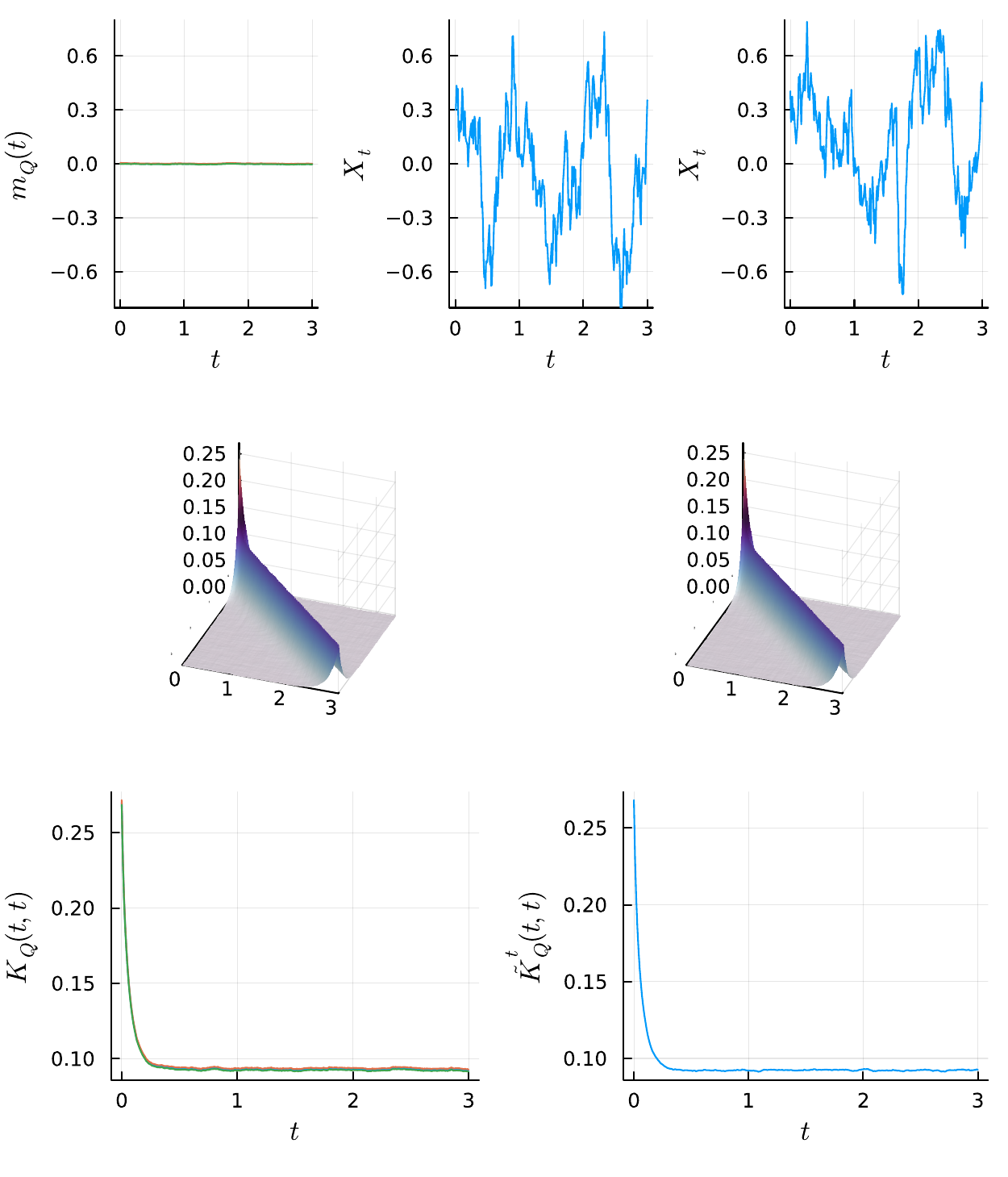}
}
\caption{S-model: $\sigma =2$, $ \lambda=1$. Left: $J=0$. $m_Q(t)=0$ at all times. Right: initial condition has mean 0.5 and $J=1$. $m_Q(t)$ decreases to 0. 
Bottom: $J=1$, zero-mean initial condition. The computation is performed on the time interval $[0,\,3.0]$ with $\Delta t=0.01$. The Monte Carlo method for the fixed point computation uses 100 000 trajectories.
}
\label{fig:7}
\end{figure}

\subsection{Numerical verification of the universality of equations \eqref{eq:dynmean}}
Thanks to \cite{dembo_universality_2021} we know that the limit measure $Q$ is independent of the actual distribution of the weights $\jij$, subject to the conditions \eqref{eq:dembo-conds}. As a consequence, the equations \eqref{eq:dynmean} that describe the limit dynamics are also independent of this distribution. We illustrate this with the following example. Consider the case of the H-model with Gaussian weights where the $\jij$s are i.i.d. as Gaussians $\mathcal{N}(J/N, \sigma/\sqrt{N})$, and $\lambda = 1$. The results of the estimation of $K_Q$ and $\tilde{K}_Q$ are shown in Figure~\ref{fig:2}. According to the universality property the same limit $Q$ is obtained when the $\jij$s are distributed, e.g., as
\begin{equation}\label{eq:bernoulli}
\jij \simeq \frac{J}{N} + \frac{\sigma}{\sqrt{N}}  \left(\frac{{\rm Ber}_{ij}(p)}{p} - 1 \right) \sqrt{\frac{p}{1-p}},
\end{equation}
where the ${\rm Ber}_{ij}(p)$s are i.i.d. as Bernoulli random variables with parameter $p$, $0 <  p < 1$. It is clear that $\mathbb{E}[\jij] = J/N$ and $var(\jij) = \sigma^2/N$. The limit measures are therefore the same in the two cases \emph{independently} of the value of $p$. We have simulated a fully connected network of $N$ neurons defined by \eqref{eq:network} where the $\jij$s are i.i.d. as \eqref{eq:bernoulli} and estimated $K_Q$ as
\[
K_Q(t,s) = \frac{1}{N} \sum_{i=1}^N f(X^i_t) f(X^i_s)
\]
We show in Figure~\ref{fig:KQcomparison} the covariance functions $K_Q$ estimated from the network equations \eqref{eq:network} and the mean-field dynamics \eqref{eq:dynmean}. The agreement is qualitatively very good.
\begin{figure}[htb]
\centerline{
\includegraphics[width=0.5\textwidth]{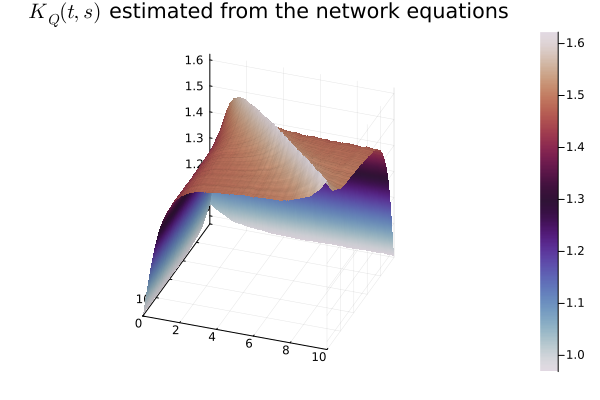} \hspace{0.1cm} 
\includegraphics[width=0.5\textwidth]{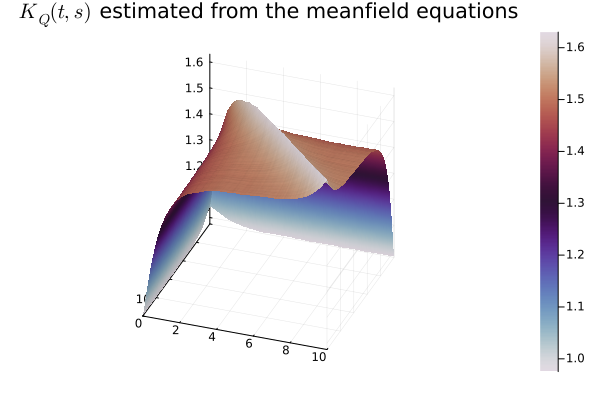}
}
\caption{Comparison of the covariance functions $K_Q(t, s)$ estimated from the network equations \eqref{eq:network} (Left) and from the mean-field dynamics \eqref{eq:dynmean} (Right, same as Figure~\ref{fig:2}, top left). The network size is 30,000. The Bernoulli parameter $p$ is equal to 0.25.}
\label{fig:KQcomparison}
\end{figure}
In order to be slightly more precise in our comparison, we show in Figure~\ref{fig:KQdiagandmeancomparison} the variances $K_Q(t,t)$ (Left) and the means $m_Q(t)$ estimated from the network equations \eqref{eq:network} (Left) and from the mean-field dynamics \eqref{eq:dynmean} (Right) together with the estimated standard deviations.
\begin{figure}[htb]
\centerline{
\includegraphics[width=0.5\textwidth]{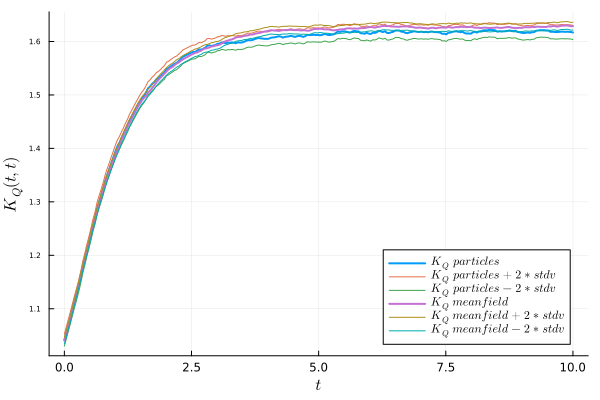} \hspace{0.1cm} 
\includegraphics[width=0.5\textwidth]{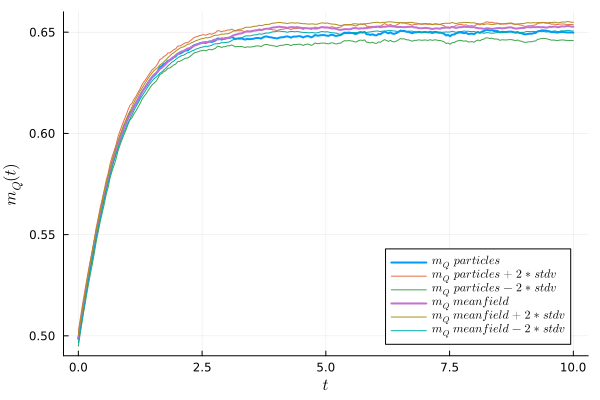}
}
\caption{Left: Comparison of the variance functions $K_Q(t, t)$ estimated from the network equations \eqref{eq:network} and from the mean-field dynamics \eqref{eq:dynmean}. Right:  Comparison of the mean functions $m_Q(t)$. The Bernoulli parameter $p$ is equal to 0.25.}\label{fig:KQdiagandmeancomparison}
\end{figure}

\subsection{Miscellaneous results}
To finish this section of numerical experiments, we show results for two variations of the H-model. The first one is the model of Sompolinsky et al. \cite{sompolinsky-zippelius:82,crisanti-sompolinsky:87,sompolinsky-crisanti-etal:88,crisanti-sompolinsky:18} where the function $f$ is the odd sigmoid $f(x)=\tanh(x)$ shown in Figure~\ref{fig:sigmoid2}. 
\begin{figure}[htb]
\centerline{
\includegraphics[width=0.45\textwidth]{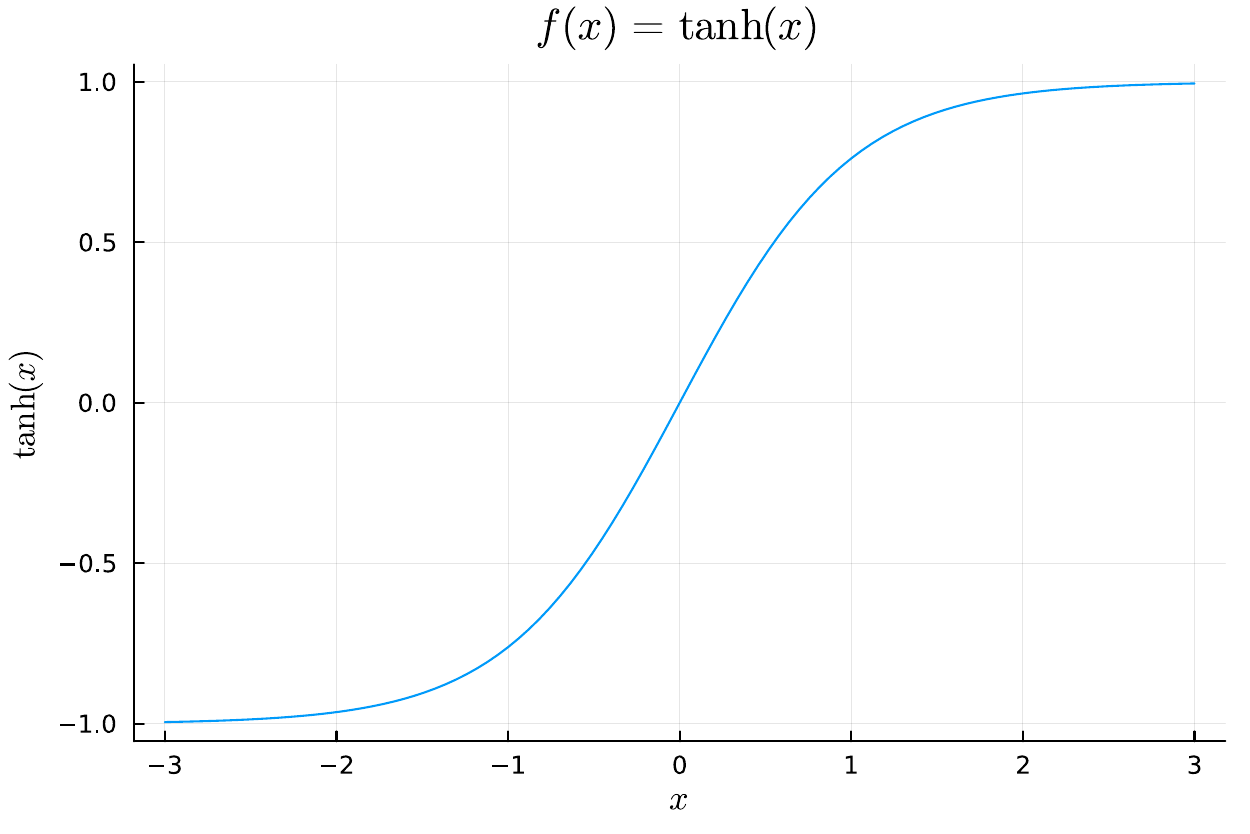}
}
\caption{The sigmoid function used in the work of Sompolinsky et al.. Note that it varies between -1 and 1 and is an odd function, as opposed to that of Figure~\ref{fig:sigmoid1}.}
\label{fig:sigmoid2}
\end{figure}

Because this is an odd function, Corollary~\ref{cor:zeromean} applies and, given a zero-mean initial value, $m_Q(t)=0$ for $0 \leq t \leq T$. This is shown in Figure~\ref{fig:8}.
\begin{figure}[htb]
\centerline{
\includegraphics[width=0.45\textwidth]{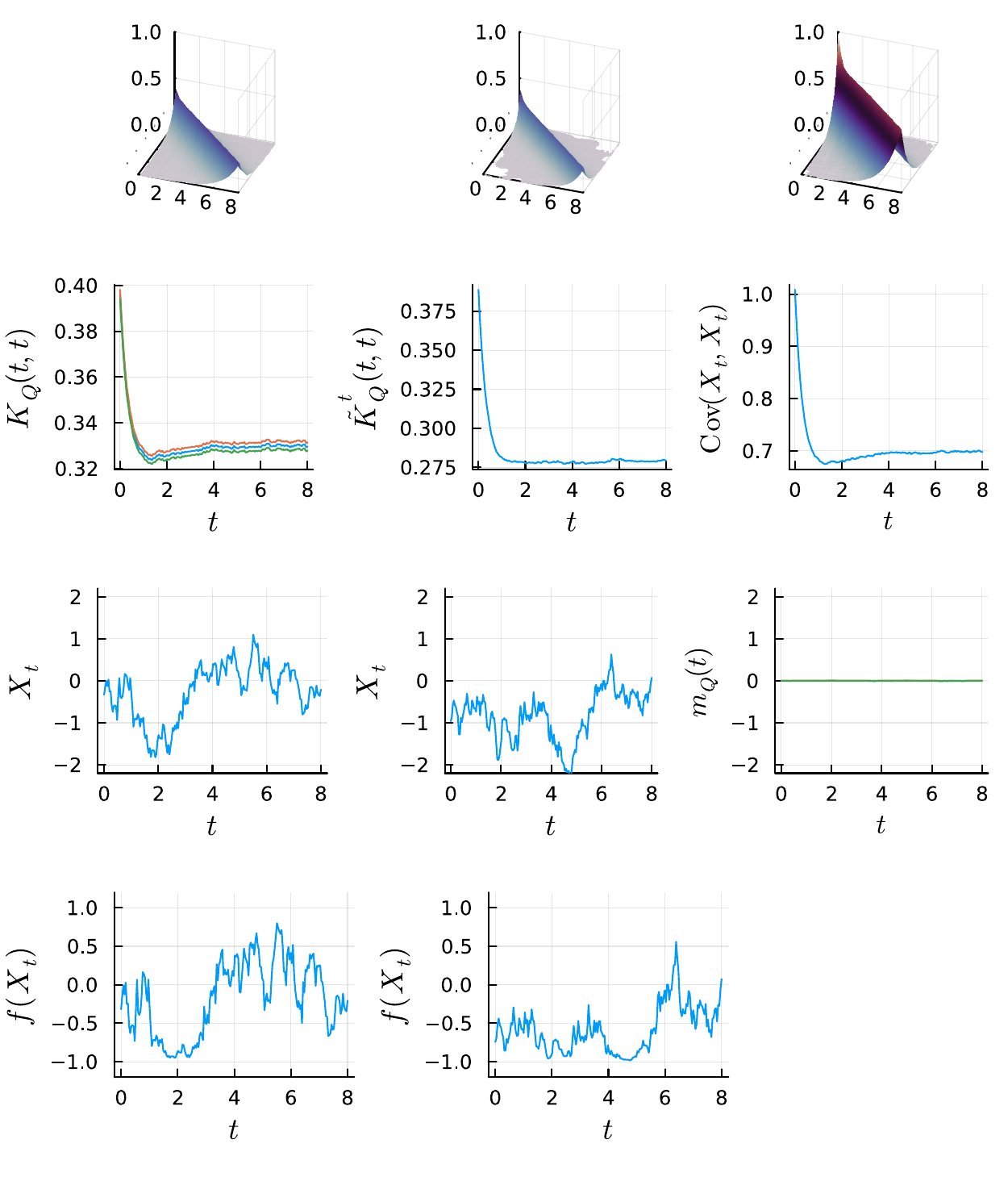}
}
\caption{H-model in the case of an odd sigmoïd function: $J = \sigma = \lambda=1$. Zero mean initial value. $m_Q(t)=0$ at all times. The computation is performed on the time interval $[0,\,8.0]$ with $\Delta t=0.04$. The Monte Carlo method for the fixed point computation uses 100 000 trajectories.
}
\label{fig:8}
\end{figure}

We next show two results when the function $f$ is the ReLu function, $f(x)=\max(x,0)$. It does not satisfy the boundedness hypothesis hence our results do not apply. The reason why we show some results is that this function is used in the field of AI as the main nonlinearity in deep recursive neural networks, see e.g. \cite{lecun_deep_2015,oostwal_hidden_2021}. Figure~\ref{fig:9} shows two results. On the left the excitatory case when $J=1$. The reader will appreciate the fact that $m_Q(t)$ seems to be growing without limit (exploding), probably due to the fact that $f$ is not upbounded, On the right we show the inhibitory case when $J=-1$. The situation seems very different, $m_Q(t)$ levels off, probably due to the fact that $f$ is bounded below. \red{Note that the cases of the ReLu function and of non zero mean synaptic weights have been investigated in the context of the transition to chaos in \cite{kadmon_transition_2015,mastrogiuseppe_intrinsically-generated_2017}.}
\begin{figure}[htb]
\centerline{
\includegraphics[width=0.5\textwidth]{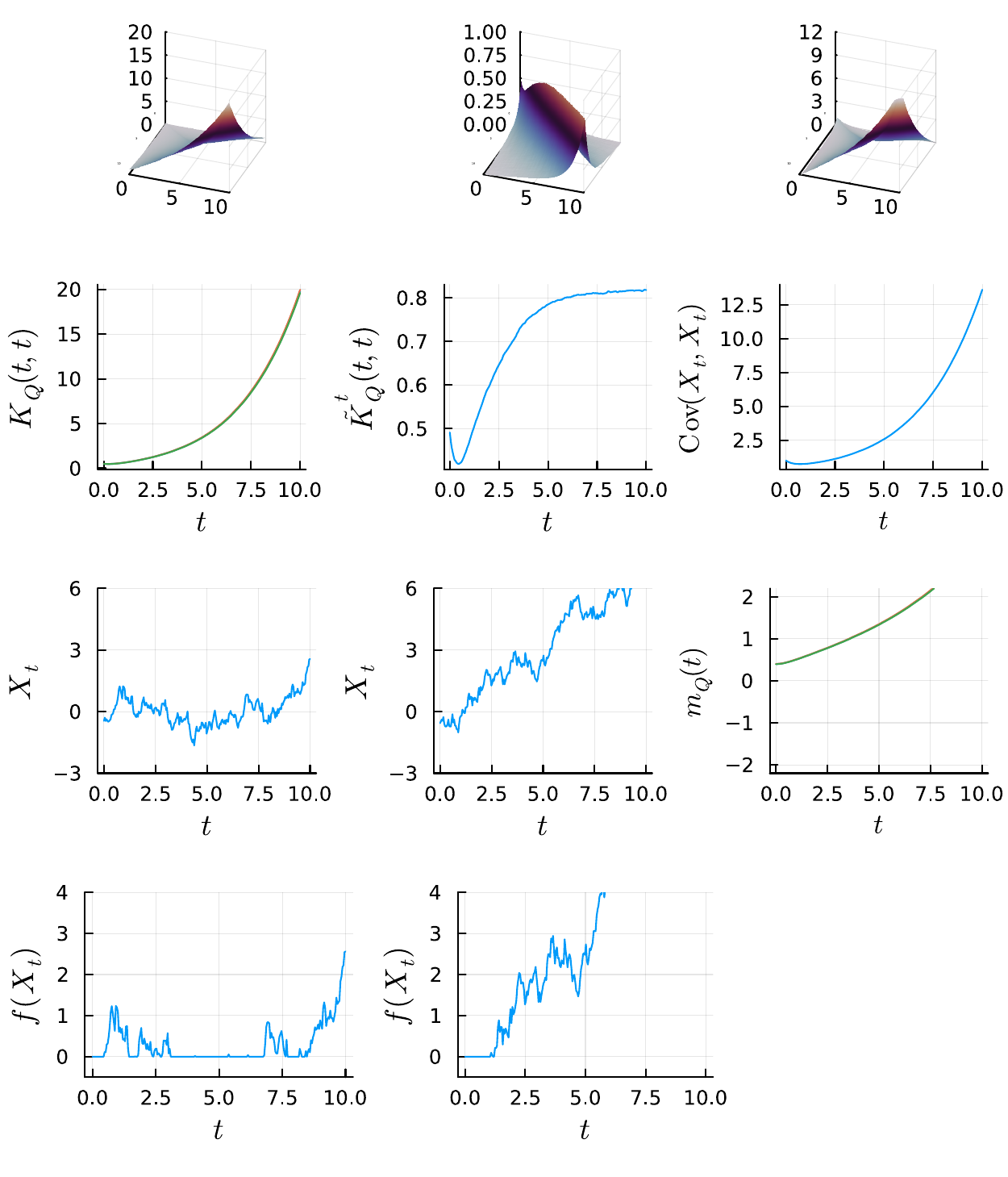}\hspace{0.4cm}
\includegraphics[width=0.5\textwidth]{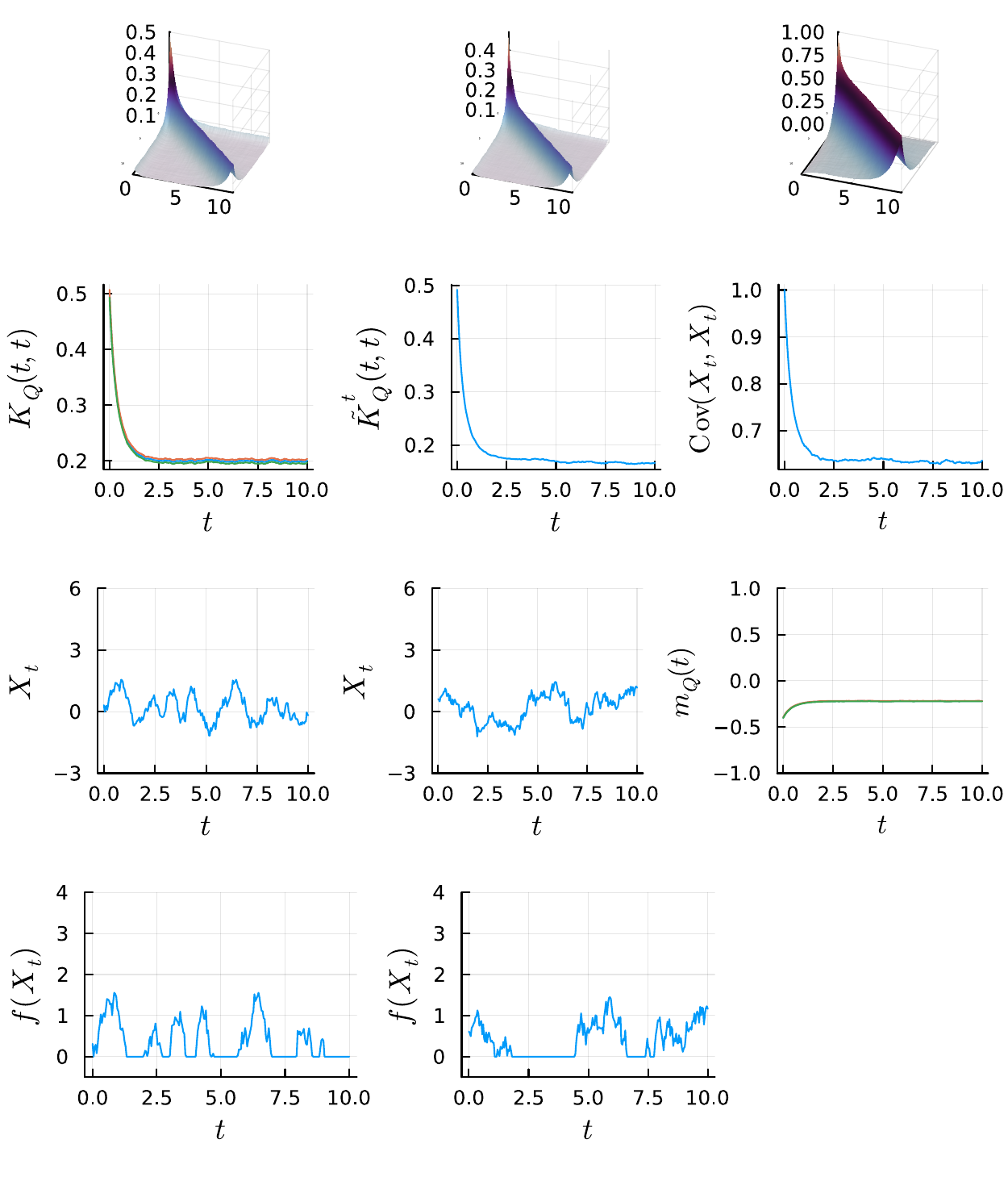}
}
\caption{H-model in the case of the ReLu function. Left $J = \sigma = \lambda=1$. Zero mean initial value. $m_Q(t)$ appears to be growing without limit. Right: $-J = \sigma = \lambda=1$. The computation is performed on the time interval $[0,\,10.0]$ with $\Delta t=0.04$. The Monte Carlo method for the fixed point computation uses 100 000 trajectories.
}
\label{fig:9}
\end{figure}
\vspace{0.5cm}

\noindent
\textbf{Acknowledgements} The authors thank Romain Veltz for his help in developing the Julia code implementing the $\Lambda$ algorithm.
E.T. thanks the support of the ANR project ChaMaNe (ANR-19-CE40-0024, CHAllenges in MAthematical NEuroscience)

\noindent
\textbf{Statements and declarations}
The authors have no competing interests to declare that are relevant to the content of this article.


\bigskip
\footnotesize

O.~Faugeras, \\
\textsc{
	Centre Inria d'Université Côte d'Azur, EPI CRONOS\\
	2004, route des Lucioles,
	BP 93\\
	06902 Sophia Antipolis  Cedex FRANCE}\par\nopagebreak \texttt{Olivier.Faugeras@inria.fr}

\vspace{5mm}
E.~Tanr\'e, \\
\textsc{Laboratoire J.A.Dieudonné\\
	UMR CNRS-UNS N°7351\\
	Université Côte d'Azur\\
	Parc Valrose\\
	06108 NICE Cedex 2\\
	FRANCE}\par\nopagebreak \texttt{Etienne.Tanre@inria.fr}
}

\end{document}